\documentclass{article}

\usepackage{amssymb,amsmath,amsfonts,amsthm}
\usepackage{authblk}

\usepackage{color}

\newtheorem{theorem}{Theorem}[section]
\newtheorem{lemma}[theorem]{Lemma}
\newtheorem{proposition}[theorem]{Proposition}
\newtheorem{corollary}[theorem]{Corollary}

\theoremstyle{definition}
\newtheorem{example}[theorem]{Example}
\newtheorem*{remark}{Remark} 
\newtheorem*{mtheorema}{Main Theorem A}
\newtheorem*{mtheoremb}{Main Theorem B}
\newtheorem{definition}[theorem]{Definition}

\numberwithin{equation}{section}

\newcommand{\Hmm}[1]{\leavevmode{\marginpar{\tiny%
			$\hbox to 0mm{\hspace*{-0.5mm}$\leftarrow$\hss}%
			\vcenter{\vrule depth 0.1mm height 0.1mm width \the\marginparwidth}%
			\hbox to 0mm{\hss$\rightarrow$\hspace*{-0.5mm}}$\\\relax\raggedright #1}}}

\newcommand{\R}{{\mathbb{R}}}
\newcommand{\C}{{\mathbb{C}}}
\renewcommand{\O}{{\mathbb{O}}}
\newcommand{\K}{{\mathbb{K}}}
\renewcommand{\H}{{\mathbb{H}}}

\DeclareMathOperator{\vol}{{\mathsf{vol}}}

\DeclareMathOperator{\Div}{{\mathsf{div}}}

\DeclareMathOperator{\Agmon}{{\mathsf{Agmon}}}

\title{Sharp Hardy-type inequalities for non-compact harmonic manifolds and Damek-Ricci spaces}
\author[1]{Florian Fischer}
\author[2]{Norbert Peyerimhoff}
\affil[1]{Institute of Mathematics, University of Potsdam,  Germany, florifis@uni-potsdam.de}
\affil[2]{Department of Mathematical Sciences, Durham University, Great Britain,
norbert.peyerimhoff@durham.ac.uk}
\date{\today}

\begin{document}

\maketitle

\begin{abstract}
    We show various sharp Hardy-type inequalities for the linear and quasi-linear Laplacian on non-compact harmonic manifolds with a particular focus on the case of Damek-Ricci spaces. Our methods make use of the optimality theory developed by Devyver/Fraas/Pinchover and Devyver/Pinchover and are motivated by corresponding results for hyperbolic spaces by Berchio/Ganguly/Grillo, and Berchio/Ganguly/Grillo/ Pinchover. 

    MSC 2020: 26D10, 31C12, 58J60

    Keywords:  Harmonic manifolds, Damek-Ricci spaces, optimal Hardy inequalities, Poincar{\'e}-Hardy inequalities, $p$-Laplacians
\end{abstract}

\tableofcontents

\section{Introduction}

The aim of this article is to derive Hardy-type inequalities for
the Laplacian on non-compact harmonic manifolds and, in particular, Damek-Ricci spaces, and to discuss applications in the same spirit as  
in Devyver/Fraas/Pinchover~\cite{DFP14},  Devyver/Pinchover~\cite{DP16}, Berchio/Ganguly/Grillo~\cite{BGG17} and Berchio/Ganguly/Grillo/ Pinchover~\cite{BGGP20}. The two latter paper focus mainly on the real hyperbolic space $\H^n = \H^n(\R)$ with $n \ge 3$.
The starting point is the classical Euclidean Hardy inequality
$$ \int_{\R^n} \vert \nabla \phi(x) \vert^2 \, dx \ge \frac{(n-2)^2}{4} \int_{\R^n} \frac{\phi^2(x)}{\vert x \vert^2}\, dx $$
for all smooth and compactly supported functions $\phi \in C_c^\infty(\R^n)$ and $n \ge 3$ (see, e.g., \cite[Cor. 1.2.6]{BEL15} for $p=2$). This inequality was generalised by G. Carron \cite[Prop. 2.1]{Car97} to arbitrary $n$-dimensional Cartan-Hadamard manifolds $X$ (that is, complete and simply connected Riemannian manifolds $(X,g)$ of non-positive curvature) with $|x|^2$ replaced by $d(o,x)^2$, where $o \in X$ is an arbitrary point (pole) and $d$ is the Riemannian distance function on $X$. 

One of the main results in \cite{BGG17} is the following Poincar\'e-Hardy inequality for the $n$-dimensional real hyperbolic spaces $\H^n = \H^n(\R)$, which can be seen as a variation of Carron's Hardy inequality for Cartan-Hadamard manifolds with an additional term in this special case (see \cite[Theorem 2.1]{BGG17}): Given $n \ge 3$ and any point $o \in \H^n$, we have
\begin{multline} \label{eq:PHhyper} 
\int_{\H^n} \vert \nabla \phi(x) \vert^2 dx \ge \lambda_0(\H^n) \int_{\H^n} \phi^2(x) dx + \frac{1}{4} \int_{\H^n} \frac{\phi^2(x)}{d(o,x)^2} dx \\ + \frac{(n-1)(n-3)}{4} \int_{\H^n} \frac{\phi^2(x)}{\sinh^2 d(o,x)}dx 
\end{multline}
for all functions $\phi \in C_c^\infty(\H^n)$. Here $\lambda_0(\H^n)$ denotes the bottom of the spectrum of the (positive) Laplacian $- \Delta = -\Div \circ \nabla$ on $\H^n$, which is known to be $\frac{(n-1)^2}{4}$. If we disregard the extra term $\frac{(n-1)(n-3)}{4} \int_{\H^n} \frac{\phi^2(x)}{\sinh^2 d(o,x)}dx$, the variation can be understood to include an additional $L^2$-norm $\Vert \phi \Vert^2_{\H^n}$ with factor $\lambda_0(\H^n)$ at the expense of the constant in front of the ``Hardy'' term $\int_{\H^n} \frac{\phi^2(x)}{d(o,x)^2}dx$.
Moreover, it is shown in \cite{BGG17} that the constants in inequality \eqref{eq:PHhyper} cannot be improved.

Real hyperbolic spaces $\H^n = \H^n(\R)$ have constant sectional curvature $-1$ and are examples of non-compact \emph{rank-one symmetric spaces}. Besides them, there also exist the (real $2n$-dimensional) complex hyperbolic spaces $\H^n(\C)$,
the (real $4n$-dimensional) quaternionic hyperbolic spaces $\H^n(\H)$ and the (real $16$-dimensional)
Cayley plane $\H^2(\O)$ based on the octonians
$\O$. These spaces comprise (besides the ``simple'' flat space $\R$), the class of all non-compact rank-one symmetric spaces, and they are all Cartan-Hadamard manifolds with sectional curvatures within the interval $[-4,-1]$. 

All these spaces are also non-compact \emph{harmonic manifolds}. A spectral geometric characterization of harmonic manifolds is that they are complete Riemannian manifolds on which all harmonic functions $\phi$ (that is $\Delta \phi = 0$) satisy the \emph{Mean Value Property} (that is $\phi(x_0) = \frac{1}{\vol(S_r(x_0))} \int_{S_r(x_0)} \phi(x)dx$ for all $x_0 \in X$ and spheres $S_r(x_0)$ of radius $r > 0$ around $x_0$).
It was generally assumed (and referred to as the so-called \emph{''Lichnerowicz Conjecture''}) that all simply connected harmonic manifolds should be either Euclidean spaces or rank-one symmetric spaces. This was proved by Z. I. Szab\'{o} \cite{Sz90} in the compact case, and it came as a surprise when E. Damek and F. Ricci discovered in 1992 (see \cite{DR92a,DR92b}) that a whole family of non-compact, non-Euclidean and generally \emph{non-symmetric} homogeneous Riemannian manifolds were indeed also harmonic manifolds (thus disproving this conjecture in the non-compact case). There spaces where studied before (see \cite{ADY,AMPS,ADB,Camp,DR92b,IS10,MV,PS10,Rou03,Rou10} for a selection of papers investigating various of their harmonic analytic properties) and they are nowadays called \emph{Damek-Ricci spaces}. They are solvable extensions $NA$ of $2$-step nilpotent groups $N$ of Heisenberg-type (by a one-dimensional abelian group $A$) with left-invariant metrics. They are associated with a pair of parameters $(p,q)$ which are the dimensions of particular subspaces of the underlying nilpotent Lie algebra of $N$. While the parameters $(p,q)$ do not always uniquely determine the Damek-Ricci space we will use for any Damek-Ricci space with these parameters the notation $X^{p,q}$ (by a slight abuse of notation due to this non-uniqueness).

Damek-Ricci spaces $X^{p,q}$ are homogeneous Cartan-Hadamard manifolds of dimension $n = p+q+1$, with sectional curvatures in the interval $[-1,0]$. The rank-one symmetric spaces $\H^n(\K)$ for the division algebras $\K = \C, \H, \O$ are -- up to scaling of the metric by the constant factor $4$ -- Damek-Ricci spaces with special choices of the parameters $(p,q)$. In fact, we can write (up to the metric scaling factor $4$), $\H^n(\C) = X^{2(n-1),1}$, $\H^n(\H) = X^{4(n-1),3}$, and
$\H^2(\O) = X^{8,7}$. While the real hyperbolic spaces $\H^n$ 
can also be viewed as solvable Lie groups $NA$ with left-invariant metrics, they are not Damek-Ricci spaces since the group $N$ of $\H^n$ is abelian and not $2$-step nilpotent as required for Damek-Ricci spaces. The smallest dimension of a Damek-Ricci space is $4$, and the only possible values $(p,q)$ for Damek-Ricci spaces $X^{p,q}$ are given in the following table with $a \ge 0$ and $b \ge 1$ (see, e.g., \cite[p. 64]{Rou03}). The derivation of these values goes back to \cite[p. 150]{Kap} and is based on the representation theory of Clifford algebras.

\medskip

\noindent
\begin{tabular}{c|c|c|c|c|c|c|c|c}
    $q$ & $8a+1$ & $8a+2$ & $8a+3$ & $8a+4$ & $8a+5$ & $8a+6$ & $8a+7$ & $8a+8$ \\ \hline
    $p$ & $2^{4a+1}b$ & $2^{4a+2}b$ & $2^{4a+2}b$ & $2^{4a+3}b$ & $2^{4a+3}b$ & $2^{4a+3}b$ & $2^{4a+3}b$ & $2^{4a+4}b$
\end{tabular}

\medskip

The Damek-Ricci space of smallest dimension which is not-symmetric is the $7$-dimensional space $X^{4,2}$.
It was shown by J. Heber \cite{Heb06} that any non-compact \emph{homogeneous} harmonic manifold must either be Euclidean, a real hyperbolic space or a (symmetric or non-symmetric) Damek-Ricci space, and it is not known and a challenging open problem whether there exist any further non-compact harmonic manifolds (which must then be necessarily \emph{non-homogeneous}).

Our main result is that the Poincar{\'e}-Hardy inequality \eqref{eq:PHhyper} for $\H^n$ has the following generalization to Damek-Ricci spaces with explicitely given constants:
\begin{mtheorema}
  Let $X^{p,q}$ be a Damek-Ricci space with a pole $o \in X^{p,q}$ and $r = d(o,\cdot)$. Then we have for all $\phi \in C_c^\infty(X^{p,q})$,
\begin{multline} \label{eq:PHDR}
    \int_{X^{p,q}} | \nabla \phi |^2 dx \ge
\lambda_0(X^{p,q}) \int_{X^{p,q}} \phi^2 dx +
\frac{1}{4} \int_{X^{p,q}} \frac{\phi^2}{r^2} dx \\ + \frac{p(p+2q-2)}{16}\int_{X^{p,q}}\frac{\phi^2}{\sinh^2(r/2)} dx+ \frac{q(q-2)}{4}\int_{X^{p,q}}\frac{\phi^2}{\sinh^2 (r)} dx.
\end{multline}
Moreover, the constants on the right hand side of this inequality are optimal and can only be improved at the expense of the other constants. 
\end{mtheorema}
In this paper, we will also discuss two applications of this inequality as well as various variations concerning the constants appearing in this inequality.


Let us briefly discuss the arguments behind this result. They
are based on the fact that, in the case of a simply connected non-compact harmonic manifold $(X,g)$ with a pole $o \in X$, the Laplacian $\Delta$ of a radial function $h(r) = h(d(o,x))$ is again radial and given by
\begin{equation} \label{eq:Lapharmsp} 
\Delta h(r) = h''(r) + \frac{f'(r)}{f(r)} h'(r),
\end{equation}
where $f(r)$ is the volume density of the harmonic manifold. For a general Riemannian manifold $(X,g)$, the volume density $f_o(x) = \sqrt{\det g_{ij}(p)}$ in normal coordinates centered around $o \in X$ is not radial and not independent of the center $o \in X$. In a harmonic manifold $(X,g)$, the volume density is a radial function $f(r)$ and all spheres of the same radius have the same volume. We have
$$ \vol(S_r(x)) = \omega_n f(r), $$
with 
$\omega_n$ being the volume of the unit sphere in $\R^n$. 
Note however, that this does not mean that a harmonic manifold is a \emph{Riemannian model} as described in \cite[Section 4]{BGG17}. In fact, a non-compact harmonic manifold is such a Riemannian model only if it is the Euclidean space $\R^n$ or the hyperbolic space $\H^n$. 

Inequality \eqref{eq:PHDR} is a consequence of the following general Hardy-type inequality for arbitrary non-compact harmonic manifolds:

\begin{mtheoremb} Let $(X,g)$ be a non-compact harmonic manifold
with volume density $f$. Let $o \in X$ be a pole and $r = d(o,\cdot)$. Then we have, for all $\phi \in C_c^\infty(X)$,
\begin{equation} \label{eq:Hharm} 
\int_X |\nabla \phi|^2 dx \ge \frac{1}{4} \int_X \frac{\phi^2}{r^2} dx + \frac{1}{4} \int_X \frac{2f(r)f''(r)-(f'(r))^2}{f^2(r)} \, \phi^2 dx.
\end{equation}
\end{mtheoremb}

Inequality \eqref{eq:PHDR} is then a consequence of \eqref{eq:Hharm} by the following explicit expression of the volume density for a Damek-Ricci space $X^{p,q}$ (see, e.g., \cite[Th{\'e}or{\`e}me 10(ii)]{Rou03}):
\begin{equation} \label{eq:voldens}
f(r) = 2^{p+q}(\sinh(r/2))^{p+q}(\cosh(r/2))^q = 2^p(\sinh(r/2))^p(\sinh r)^q. 
\end{equation}
This implies that
\begin{equation} \label{eq:fprimef} 
\frac{f'(r)}{f(r)} = \frac{p}{2} \coth(r/2) + q \coth r = \frac{p+2q}{2}\coth(r/2) - \frac{q}{\sinh(r)}, 
\end{equation}
and we have for radial functions $h(r)$ in $X^{p,q}$,
$$ \Delta h(r) = h''(r) + \left( \frac{p}{2} \coth(r/2) + q \coth r \right) h'(r). $$
Since $\coth(x) \ge 1/x$ and $\sinh x \ge x$, it follows from \eqref{eq:fprimef} for all $r > 0$ that
\begin{equation} \label{eq:meancurvDR} 
\frac{f'(r)}{f(r)} \ge \frac{p+q}{r}. 
\end{equation}
Moreover, the $L^2$-spectrum of the operator $-\Delta$ on $X^{p,q}$ is given by $\sigma(-\Delta) = [\rho^2,\infty)$ with $\rho = \frac{p+2q}{4}$ (see, e.g., \cite[Remark 2.2(iii)]{PS10}). Therefore, we have $\lambda_0(X^{p,q}) = \frac{(p+2q)^2}{16}$. Since the Cheeger constant of $X^{p,q}$ is given by
$h(X^{p,q}) = 2 \rho = \frac{p+2q}{2}$ (see \cite[Remark 2.2(i)]{PS10}), this means that Cheeger's Inequality $\lambda_0(X^{p,q}) = \frac{h(X^{p,q})^2}{4}$ holds in this case with equality. In fact, this holds for arbitrary non-compact harmonic manifolds $(X,g)$ (see \cite[Corollary 5.2]{PS15}), and the Cheeger constant agrees with other geometric quantities like the (constant) mean curvature of the horospheres or the exponential volume growth (see \cite[Theorem 5.1]{PS15}).

For readers interested in more details about harmonic manifolds and Damek-Ricci spaces, 
we provide this information and a description of the real hyperbolic space $\H^n$ as a solvable Lie group with left-invariant metric in the Appendix.

Moreover, since the seminal works of Hardy and Landau on Hardy-type inequalities more than hundred years ago, these inequalities are studied for the more general quasi-linear case $P>1$ as well. The inequalities discussed before correspond to the linear case $P=2$. We show the related inequalities for $P \ge 2$ in Subsection~\ref{sec:PHardy}. 

The structure of the paper is as follows: In the next section, we state and prove the above Main Theorems. 
Thereafter, we briefly show two famous applications: a version of Heisenberg-Pauli-Weyl's uncertainty principle and a Rellich-type inequality. In Section~\ref{sec:VariMain}, we vary parts of the proof of the main result and get closely connected families of sharp Poincaré-Hardy-type inequalities. The first variation shows the effect of being slightly away from the bottom of the spectrum at the Poincaré-part of the inequality, the second focuses on the weighted version and the third on the $P$-Laplacian version. In Section~\ref{sec:PGreen} we show another Poincar{\'e}-Hardy-type inequality for the $P$-Laplacian using the $P$-Green function. Here the focus is on the asymptotic behaviour of the corresponding Hardy weight. This closes the main part of this paper. In the Appendix, we briefly introduce further information about harmonic manifolds and Damek-Ricci spaces. 
 
\section{A Poincar{\'e}-Hardy-type inequality and applications}\label{sec:Main}

This section is concerned with an analogue of \cite[Theorem 2.1]{BGG17} for all Damek-Ricci spaces. Before we present this result, we first need to introduce some background. The main reference here is \cite{DFP14}. While the concepts hold in more general Lebesgue and Sobolev spaces, we restrict our considerations to the smooth setting.

\subsection{Some background from Optimality Theory} \label{subsec:opttheory}


Henceforth we always assume that $(X,g)$ is a non-compact Riemannian manifold. We are also concerned with the associated Schr\"odinger operators $-\Delta + V$ with potentials $V \in C^\infty(X)$ and their corresponding \emph{energy functionals} $E_V$ on $C_c^\infty(X)$, which are quadratic forms defined via
$$ E_V(\phi) := \int_X |\nabla \phi|^2 + V |\phi|^2 dx. $$
We say that $E_V$ is \emph{non-negative} in $X$ and write $E_V \ge 0$ in $X$, if $E_V(\phi) \ge 0$ for all $\phi \in C_c^\infty(X)$. 

Let $\Omega \subset X$ be a domain (that is a non-empty connected open subset). A function $u \in C^\infty(\Omega)$ is called \emph{solution} (with respect to the operator $-\Delta +V$) if $(-\Delta+V) u = 0$, \emph{subsolution} if $(-\Delta+V) u \le 0$, and \emph{supersolution} if $(-\Delta+V) u \ge 0$ in $\Omega$. Moreover, a function $u\in C^\infty(\Omega)$ is called \emph{(super-)solution near infinity} if there exists a compact set $K\subset X$ such that $(-\Delta +V)u=0$ in $\Omega\setminus K$, (resp.,  $(-\Delta +V)u\geq 0$ in $X\setminus K$). Furthermore, $u \in \Omega$ is a \emph{solution near} $o\in X$, if there is an open set $O\subseteq X$ containing $o$ such that $ (-\Delta +V)u=0$ in $(\Omega \cap O)\setminus \{o\}$.

Let $W \ge 0$ in $X$. 
A Hardy-type inequality then reads as
$$ E_V(\phi) \ge  \int_X W |\phi|^2 dx \qquad \text{for all $\phi \in C_c^\infty(X)$.} $$
One goal in Optimality Theory is -- roughly speaking -- to make $W$ as large as possible with large support (confer \cite[page 6]{Ag82} where this problem was proposed first). With this idea in mind, Devyver, Fraas and Pinchover came up with a definition of an optimal Hardy weight, see \cite[Definitions~2.1, 4.8 and 4.10]{DFP14}. 

\begin{definition}
  Let $(X,g)$ be a non-compact Riemannian manifold and $o \in X$.
  Let $E_V \ge 0$ in $X$ and $W \ge 0$ be a non-trivial function such that the following Hardy-type inequality holds:
  $$ E_V(\phi) \ge  \int_{X \setminus \{o\}} W |\phi|^2 dx \qquad \text{for all $\phi \in C_c^\infty(X \setminus \{o\})$.} $$
  Then $W$ is called an \emph{optimal Hardy weight} of the Schr\"odinger operator $- \Delta + V$ in $X \setminus \{o\}$ if
  \begin{enumerate}
      \item $- \Delta + (V-W)$ is \emph{critical} in $X \setminus \{o\}$, that is, for any $\widetilde W \ge  W$ with $\widetilde W \neq  W$, the Hardy-type inequality
      $$ E_V(\phi) \ge \int_{X \setminus \{o\}} \widetilde W |\phi|^2 dx \qquad \text{for all $\phi \in C_c^\infty(X \setminus \{o\})$} $$
      does not hold. This is equivalent to the following (see e.g.  \cite[Lemma~2.11]{KP20}): there exists a unique (up to a multiplicative constant) positive supersolution to $- \Delta u + (V - W) u = 0$ on $X \setminus \{o\}$. Such a function is also a solution and is called the \emph{(Agmon) ground state} $u_{\Agmon}$.
      \item $- \Delta + (V-W)$ is \emph{null-critical} with respect to $W$, that is, we have that $ u_{\Agmon} \not\in L^2(X \setminus \{o\}, W dx)$.
  \end{enumerate}
\end{definition}

The original definition of an optimal Hardy weight also requires a condition called \emph{optimality at infinity} (which is part (b) in Definition~2.1 of \cite{DFP14}, or see \cite[Definition~2.14]{KP20}). Recently, Kova\v{r}\'{i}k and Pinchover showed in \cite[Corollary~3.7]{KP20} that in our setting, null-criticality implies optimality at infinity. For that reason, it is not necessary to give the definition of optimality at infinity, since it is covered by Condition 2. 

Now we are in a position to present the main result in this section for Damek-Ricci spaces. It is an analogue of \cite[Theorem 2.1]{BGG17} which covers the case of real hyperbolic spaces. Recall that Damek-Ricci spaces do not include real hyperbolic spaces and that their smallest dimension is $4$. Our proof is inspired by the proof given in Section 4 of \cite{BGG17}. 
\begin{theorem}[Poincar\'{e}-Hardy-type inequality on Damek-Ricci spaces]\label{thm:PH}
Let $X^{p,q}$ be a Damek-Ricci space, $o \in X^{p,q}$ be a pole, and $r = d(o,\cdot)$. We have for all $\phi \in C_c^{\infty}(X^{p,q})$,
\begin{multline*}\int_{X^{p,q}} | \nabla \phi |^2 dx -
\lambda_0(X^{p,q}) \int_{X^{p,q}} |\phi|^2 dx \\
\geq \frac{1}{4} \int_{X^{p,q}} \frac{|\phi|^2}{r^2} dx   + \frac{p(p+2q-2)}{16}\int_{X^{p,q}}\frac{|\phi|^2}{\sinh^2(r/2)} dx+ \frac{q(q-2)}{4}\int_{X^{p,q}}\frac{|\phi|^2}{\sinh^2 (r)} dx.
\end{multline*} 
Moreover, the operator $- \Delta + (V-W)$ on $X^{p,q}$ with
\begin{equation*} 
V(r):=  - \lambda_0(X^{p,q}) \quad \text{and} \quad W(r):= \frac{1}{4r^2} + \frac{p(p+2q-2)}{16\sinh^2(r/2)} + \frac{q(q-2)}{4\sinh^2 (r)}
\end{equation*}
is critical in $X^{p,q}\setminus \{o\}$. In particular, there is no 
$\widetilde W(r) \ge \frac{1}{4r^2} +\frac{p(p+2q-2)}{16\sinh^2(r/2)} + \frac{q(q-2)}{4\sinh^2(r)}$, $\widetilde W \neq W$, such that 
$$\int_{X^{p,q}} | \nabla \phi |^2 dx - 
\lambda_0(X^{p,q}) \int_{X^{p,q}} |\phi|^2 dx \\
\geq \int_{X^{p,q}} \widetilde W |\phi|^2 dx
$$
holds true for all $\phi\in C_c^{\infty}(X^{p,q}\setminus \{o\})$.

Furthermore, if we choose as a weight $W_1(r)= \frac{1}{4r^2}$, then this weight is optimal with respect to the operator 
\begin{equation*} 
 -\Delta - \lambda_0(X^{p,q}) -\frac{p(p+2q-2)}{16\sinh^2(r/2)} - \frac{q(q-2)}{4\sinh^2 (r)}.
\end{equation*}
\end{theorem}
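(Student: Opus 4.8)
The plan is to deduce everything from the general harmonic-manifold inequality (Main Theorem B) together with the explicit volume density \eqref{eq:voldens}, and then to establish criticality by exhibiting the Agmon ground state explicitly. First I would carry out the computation that turns \eqref{eq:Hharm} into the stated three-term inequality: using $f(r) = 2^p(\sinh(r/2))^p(\sinh r)^q$ one has $\log f = p\log\sinh(r/2) + q\log\sinh r + \text{const}$, so $\frac{f'}{f} = \frac p2 \coth(r/2) + q\coth r$ as in \eqref{eq:fprimef}, and differentiating once more gives $\left(\frac{f'}{f}\right)' = -\frac p4 \operatorname{csch}^2(r/2) - q\operatorname{csch}^2 r$. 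Since $\frac{2ff''-(f')^2}{f^2} = 2\left(\frac{f'}{f}\right)' + \left(\frac{f'}{f}\right)^2$, expanding the square of $\frac p2\coth(r/2)+q\coth r$ and using the identities $\coth^2 = 1 + \operatorname{csch}^2$ and the half-angle relation $\coth(r/2) = \coth r + \operatorname{csch} r$ (equivalently $\coth(r/2)\operatorname{csch} r$ reduces via $\sinh r = 2\sinh(r/2)\cosh(r/2)$) should collapse the expression into a constant term $\frac{(p+2q)^2}{16} = \lambda_0(X^{p,q})$ plus $\frac{p(p+2q-2)}{16}\operatorname{csch}^2(r/2) + \frac{q(q-2)}{4}\operatorname{csch}^2 r$. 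Moving the constant $\lambda_0$ to the left-hand side against $\int|\phi|^2\,dx$ then yields exactly the displayed inequality. This part is routine hyperbolic-trig bookkeeping; the only thing to watch is getting the half-angle cross-term exactly right so that the constant that emerges is precisely $\lambda_0$.

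Next I would identify the Agmon ground state for the operator $-\Delta + (V - W)$ on $X^{p,q}\setminus\{o\}$. The natural candidate, by analogy with the hyperbolic case in \cite{BGG17} and with the general optimal-Hardy-weight construction of \cite{DFP14}, is a radial function built as a product of the "supersolution factors" attached to each piece of $W$: something of the form $u(r) = r^{1/2} (\sinh(r/2))^{-p/2}(\sinh r)^{-q/2} \cdot (\text{logarithmic correction})$ — more precisely, recall that in the DFP scheme, given a positive solution $G$ of $(-\Delta - \lambda_0)G = 0$ near infinity and a positive solution near $o$, the optimal weight is $W = |\nabla \log(G_0 G_\infty)|^2$ type object and the ground state is the geometric-mean $\sqrt{G_0 G_\infty}$. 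Here $G_\infty$ is the radial $\lambda_0$-harmonic function decaying at infinity (whose existence and asymptotics for harmonic manifolds follow from \eqref{eq:Lapharmsp}: solving $h'' + \frac{f'}{f}h' = \lambda_0 h$), and $G_0$ is the radial $\lambda_0$-harmonic function singular at $o$ (behaving like the Green function, $\sim r^{2-n}$). I would verify directly that $u_{\Agmon} := \sqrt{G_0 G_\infty}$ is a positive solution of $-\Delta u + (V-W)u = 0$ on $X^{p,q}\setminus\{o\}$ — this is where the specific form of $W$ as (essentially) $\frac14(\log(G_0G_\infty))'{}^2 + \text{curvature terms}$ must match up, and it is really a restatement of the ground-state transform identity $E_{V-W}(\phi) = \int u^2 |\nabla(\phi/u)|^2$ combined with the supersolution construction of \cite{DFP14}.

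Then criticality: by the equivalence quoted in the Definition (and \cite[Lemma 2.11]{KP20}), $-\Delta+(V-W)$ is critical in $X^{p,q}\setminus\{o\}$ iff it admits a unique-up-to-scalar positive (super)solution, namely the ground state just constructed. The cleanest route is to invoke the general DFP machinery: the weight $W$ is exactly of the form $\big|\nabla \log \sqrt{G_0 G_\infty}\big|^2 + (\text{the remaining Hardy terms forced by the harmonic-manifold Laplacian})$, and \cite[Theorem 4.1 / Theorem 1.1]{DFP14} guarantees that such a weight is critical and null-critical provided $G_0, G_\infty$ satisfy suitable conditions at $o$ and at infinity (a "non-parabolicity"/integrability dichotomy). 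I would check those hypotheses using the explicit asymptotics: near $o$, $G_0 \sim r^{2-n}$ so $u_{\Agmon} \sim r^{(2-n)/2}$, and near infinity $G_\infty$ decays like $e^{-\rho r}$ times a polynomial (from the spectral data $\sigma(-\Delta) = [\rho^2,\infty)$, $\rho = \frac{p+2q}{4}$), so $u_{\Agmon}$ decays like $e^{-\rho r}$ times lower-order factors. One then verifies $u_{\Agmon}\notin L^2(W\,dx)$ at both ends, giving null-criticality, hence (by \cite[Corollary 3.7]{KP20}) optimality. For the final claim — that $W_1(r) = \frac{1}{4r^2}$ alone is optimal for the operator with the $\operatorname{csch}^2$ terms absorbed into $V$ — I would observe that this is the same operator $-\Delta + (V - W)$ just renamed: putting $\widetilde V := -\lambda_0 - \frac{p(p+2q-2)}{16}\operatorname{csch}^2(r/2) - \frac{q(q-2)}{4}\operatorname{csch}^2 r$, the operator $-\Delta + \widetilde V - W_1$ coincides with $-\Delta + V - W$, so criticality and null-criticality are literally the same statement, and no separate argument is needed.

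The main obstacle I anticipate is not the trigonometric reduction (tedious but mechanical) but pinning down the exact radial solutions $G_0$ and $G_\infty$ and their precise asymptotics at $o$ and at infinity well enough to (i) confirm that $\sqrt{G_0 G_\infty}$ is genuinely a solution of the right equation — i.e. that the "extra" terms in \eqref{eq:Hharm} beyond the DFP model term really do conspire with the harmonic-manifold Laplacian \eqref{eq:Lapharmsp} — and (ii) verify the null-criticality integral diverges at both ends. For (i) the safest approach is to avoid guessing closed forms and instead run the ground-state substitution $\phi = u\psi$ abstractly: show that for the candidate $u$, $-\Delta u + (V-W)u = 0$ is equivalent to an ODE identity that follows from \eqref{eq:fprimef} and its derivative, so that the whole theorem reduces to the already-established Main Theorem B plus the observation that equality-type analysis (the DFP criticality criterion) applies because $W$ was built as the "optimal" combination. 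I would flag that the uniqueness of the positive supersolution — the heart of criticality — is exactly the content being imported from \cite{DFP14,KP20}, and the paper's contribution here is checking their hypotheses in the harmonic-manifold setting, which the explicit volume density \eqref{eq:voldens} makes possible.
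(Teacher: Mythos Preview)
Your derivation of the inequality from Main Theorem~B via the explicit volume density is exactly what the paper does (Lemma~\ref{lem:Hilfe} with $(a,b)=(1/4,1/2)$, giving \eqref{eq:ab1412}), and your final observation that optimality of $W_1=\frac{1}{4r^2}$ is literally the same operator renamed is also the paper's argument. Where you diverge is the criticality step. You plan to realise the ground state as $\sqrt{G_0G_\infty}$ with $G_0,G_\infty$ radial solutions of $(-\Delta-\lambda_0)u=0$ and then invoke the abstract DFP theorem; you correctly flag that pinning down these $\lambda_0$-harmonic functions is the main obstacle, and indeed on $X^{p,q}$ they are Jacobi functions without elementary closed form. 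The paper bypasses this entirely: it writes down $\Phi(r)=\sqrt{r/f(r)}$ (which is your candidate $r^{1/2}(\sinh(r/2))^{-p/2}(\sinh r)^{-q/2}$ up to a constant) and verifies by direct computation (Lemma~\ref{lem:alphabeta} with $(\alpha,\beta)=(1/2,-1/2)$) that $\Phi$ solves the \emph{full} equation $(-\Delta+V-W)\Phi=0$ on $X\setminus\{o\}$; it then exhibits a second explicit solution $\Phi\ln r$ of the same equation (Lemma~\ref{lem:Phig}), and since $\Phi/(\pm\Phi\ln r)\to 0$ at both $o$ and infinity, the Khas'minski\u{i} criterion (Proposition~\ref{prop:khasminski}) shows $\Phi$ has minimal growth at both ends, whence criticality via \cite{PT08}. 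Null-criticality is the one-line divergence $\omega_n\int_0^\infty (r/f)\cdot f\cdot(4r^2)^{-1}\,dr=\infty$. The upshot is that while the base operator $-\Delta-\lambda_0$ has non-elementary radial solutions, the full operator $-\Delta+V-W$ has the elementary pair $\sqrt{r/f}$ and $\sqrt{r/f}\,\ln r$, so working directly with it --- rather than routing through the DFP template applied to $-\Delta-\lambda_0$ --- is both shorter and avoids the asymptotic analysis you anticipate needing.
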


The proof of Theorem~\ref{thm:PH} is given in the next subsection, and is based on a result known as Khas’minski\u{i}-type criterion. During the proof, we formulate a Hardy-type inequality for the more general case of non-compact harmonic manifolds. Subsection~\ref{sec:Applications} presents applications of this result: An uncertainty principle and a Poincar\'e-Rellich-type inequality. 
 
\subsection{Proof of the Poincar{\'e}-Hardy-type inequality}\label{sec:proofPH}

To prove Theorem~\ref{thm:PH}, we will use the following result -- known as Khas’minski\u{i}-type criterion. This is a variation of Proposition~6.1 in \cite{DFP14}, confer also with \cite[Lemma~9.2.6]{KPP20}. 
It is in fact an equivalence (see \cite[Theorem~1]{Ancona}) but we only need one direction here.

We start with a non-compact Riemannian manifold $X$ with a chosen point $o\in X$. We refer to $o$ as a \emph{pole} of $X$.

We need some more definitions, confer with \cite[Definition~4.4]{DFP14} and also with \cite[Definition~11.2]{DFP14}:  
Let $K$ be a compact subset of $X\setminus \{o\}$, and let $u$ be a positive function on $X\setminus (\{o\}\cup K)$ which is solution of $(-\Delta+V)u=0$ in $X\setminus (\{o\}\cup K)$. The function $u$ has \emph{minimal growth at infinity} if for every compact $K'\subseteq X\setminus \{o\}$ with smooth boundary such that $K\subseteq \mathrm{int}(K')$ and for every positive supersolution $v\in C(X\setminus (\{o\}\cup K'\cup \partial K'))$ in $X\setminus (\{o\}\cup K')$ with $u\leq v$ in $\partial K'$, we have $u\leq v$ in $X\setminus (\{o\}\cup K')$.

Let $u$ be a positive function defined in a punctured neighbourhood $\Omega $ of the pole $o\in X$ which is a solution of $(-\Delta+V)u=0$ in $\Omega\setminus \{o\}$.  The function $u$ has \emph{minimal growth at $o$} if for every positive supersolution $v$ in a punctured neighbourhood of $o$, there is a constant  $C>0$ such that $u\leq Cv$ in a punctured neighbourhood $\Omega' \subseteq \Omega$ of $o$.

A \emph{global minimal solution} is a positive function on $X\setminus \{o\}$ which is solution of $(-\Delta+V)u=0$ in $X\setminus \{o\}$, and has both, minimal growth at infinity and $o$.

\begin{proposition}[Khas’minski\u{i}-type criterion]\label{prop:khasminski} Let $K$ be a compact subset of $X$. If $u$ and $v_\infty$ are two positive functions on $X\setminus K$ which are solutions of $(-\Delta+V)u=0$ in $X\setminus K$, and if
$$ \lim_{x\to \infty}\frac{u(x )}{v_\infty(x)}=0,$$
then $u$ has minimal growth at infinity.

If $u$ and also $v_0$ are two positive functions in a punctured neighbourhood $\Omega$ of the pole $o$ which are solutions of $(-\Delta+V)u=0$ in $\Omega\setminus \{o\}$, and 
$$ \lim_{x\to o}\frac{u(x )}{v_0(x)}=0,$$
then $u$ has minimal growth at $o$.
\end{proposition}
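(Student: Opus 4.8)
The plan is to prove both assertions by one and the same comparison argument, which is the engine of every Khas'minski\u{i}-type criterion: a subsolution of $-\Delta+V$ that is dominated on the boundary of a relatively compact region, and that is negligible on the part of the boundary approaching the singularity (infinity, resp. the pole $o$), must be dominated throughout the region. The ``negligibility'' is supplied exactly by the hypotheses $u/v_\infty\to 0$ (resp. $u/v_0\to 0$), while the comparison function $v_\infty$ (resp. $v_0$) simultaneously furnishes the positive supersolution that powers the maximum principle.

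First I would isolate the comparison principle I need: on a relatively compact domain $A\subset X\setminus K$ carrying a positive supersolution $\psi$ of $-\Delta+V$, any $w$ with $(-\Delta+V)w\le 0$ and $w\le 0$ on $\partial A$ satisfies $w\le 0$ in $A$. This follows from the ground-state (Doob) transform $w=\psi g$, for which a direct computation gives
\[ \frac{1}{\psi}(-\Delta+V)(\psi g) = -\Delta g - \frac{2}{\psi}\langle \nabla\psi,\nabla g\rangle + \frac{(-\Delta+V)\psi}{\psi}\,g, \]
an operator whose zeroth-order coefficient $\frac{(-\Delta+V)\psi}{\psi}$ is $\ge 0$ because $\psi$ is a supersolution. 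The classical weak maximum principle for a second-order elliptic operator with non-negative zeroth-order term on a bounded domain then forces $g\le 0$, i.e. $w\le 0$. Since everything in sight is a smooth solution of an equation with smooth $V$, no regularity issues arise, and crucially no global sign assumption on $V$ is needed: only a \emph{local} positive supersolution on $A$ is used.

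For minimal growth at infinity I fix a compact $K'$ with $K\subseteq\mathrm{int}(K')$ and an arbitrary positive supersolution $v$ with $u\le v$ on $\partial K'$, and compare $u$ with $v+\varepsilon v_\infty$ for $\varepsilon>0$. Choosing a compact exhaustion $\{L_j\}$ with $K'\subset\mathrm{int}(L_1)$, the hypothesis $u/v_\infty\to 0$ yields $u<\varepsilon v_\infty$ on $\partial L_j$ for all large $j$; together with $u\le v\le v+\varepsilon v_\infty$ on $\partial K'$, the subsolution $w=u-(v+\varepsilon v_\infty)$ is $\le 0$ on the whole boundary of the annulus $L_j\setminus\mathrm{int}(K')$. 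The comparison principle above (with $\psi=v_\infty$, which is positive on $X\setminus K$ and hence on this annulus) gives $u\le v+\varepsilon v_\infty$ there; letting $j\to\infty$ and then $\varepsilon\to 0$ produces $u\le v$ on $X\setminus(\{o\}\cup K')$, which is precisely minimal growth at infinity. For minimal growth at $o$ I run the mirror-image argument on a spherical shell $\{\delta'\le d(o,\cdot)\le\delta\}$: on the outer sphere $S_\delta(o)$ both $u$ and the given supersolution $v$ are positive and continuous, so $u\le C v$ there with $C=\max_{S_\delta(o)}u/\min_{S_\delta(o)}v$; on the inner sphere $S_{\delta'}(o)$ the hypothesis $u/v_0\to 0$ gives $u\le\varepsilon v_0$; comparing $u$ with $Cv+\varepsilon v_0$ on the shell (now with $\psi=v_0$) and letting $\delta'\to 0$, $\varepsilon\to 0$ yields $u\le C v$ in a punctured neighbourhood of $o$.

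The one point requiring care — and where I expect the main technical friction — is keeping the maximum principle honest near the puncture. Both singular regimes are handled by always working on regions bounded away from the offending point (a far annulus for infinity, an inner sphere $S_{\delta'}(o)$ with $\delta'>0$ for the pole), so that the domain is genuinely relatively compact, the comparison function stays positive, and the equation stays regular; the singularity is reached only in the limit $j\to\infty$ or $\delta'\to 0$, by which stage the inequality has already been established pointwise on an exhausting family of shells. One must also verify that in the pole case the constant $C$ can be fixed once and for all, independently of $\delta'$ and $\varepsilon$, which is immediate since it is determined by the fixed outer sphere $S_\delta(o)$.
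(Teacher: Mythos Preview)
Your argument is correct and is precisely the approach the paper has in mind: the paper simply cites \cite[Proposition~6.1]{DFP14} and says it ``is essentially an application of the maximum principle to an exhaustion of the space,'' with the second part obtained by ``a mild adaption of the same argument.'' You have faithfully unpacked that citation---the Doob transform to reduce to an operator with non-negative zeroth-order part, the comparison $u\le v+\varepsilon v_\infty$ on annuli $L_j\setminus\mathrm{int}(K')$, and the mirror argument on shrinking shells near $o$---so there is nothing to add.
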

\begin{proof}
    The first part is Proposition~6.1 in \cite{DFP14}, which is essentially an application of the maximum principle to an exhaustion of the space. The second part follows by a mild adaption of the same argument as in the proof of the cited proposition.
\end{proof}

The proof of Theorem~\ref{thm:PH} uses some general explicit calculations for radial functions. These are extracted in the next lemmata. We mainly work in harmonic manifolds and use the explicit form of the volume density on Damek-Ricci spaces as late as possible.

\begin{lemma} \label{lem:alphabeta}
  Let $(X,g)$ be a non-compact harmonic manifold
  with volume density $f$, $o \in X^{p,q}$ a pole, $r = d(o,\cdot)$, and $\alpha, \beta\in \mathbb{R}$. Then 
  $$ \Phi(r) =  r^{\alpha} f^{\beta}(r)$$
  satisfies on $X\setminus \{o\}$
  $$ \Delta \Phi(r) = \left(\frac{\alpha(\alpha-1)}{r^2}+ \alpha(2\beta +1)\frac{f'(r)}{r f(r)} +\beta \frac{f''(r)}{f(r)}+ \beta^2\frac{(f'(r))^2}{f(r)^2} \right)\Phi(r). $$ 
  
  In particular, if $(\alpha,\beta)= (1/2,-1/2)$, we have on  $X\setminus \{o\}$,
   $$ \Delta \Phi(r) = \left(\frac{(f'(r))^2- 2 f(r)f''(r)}{4 f(r)^2} -\frac{1}{4r^2} \right)\Phi(r). $$ 
\end{lemma}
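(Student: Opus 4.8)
The statement is a direct second-order computation, so the plan is simply to reduce everything to the radial Laplacian formula \eqref{eq:Lapharmsp} and differentiate carefully. First I would record that, since $o$ is a pole, $r = d(o,\cdot)$ is smooth on $X \setminus \{o\}$, and the volume density satisfies $f > 0$ on $(0,\infty)$; hence $\Phi(r) = r^{\alpha} f^{\beta}(r)$ is a well-defined smooth radial function on $X \setminus \{o\}$, and \eqref{eq:Lapharmsp} applies to give $\Delta \Phi = \Phi'' + \frac{f'}{f}\,\Phi'$. It therefore suffices to compute the first and second $r$-derivatives of $\Phi$ and collect terms.

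The cleanest route is via the logarithmic derivative. Since $\log \Phi = \alpha \log r + \beta \log f$, we get $\frac{\Phi'}{\Phi} = \frac{\alpha}{r} + \beta \frac{f'}{f}$, i.e.\ $\Phi' = \left(\frac{\alpha}{r} + \beta \frac{f'}{f}\right)\Phi$. Differentiating once more,
$$\Phi'' = \left(\frac{\alpha}{r} + \beta \frac{f'}{f}\right)^2 \Phi + \left(-\frac{\alpha}{r^2} + \beta \frac{f''}{f} - \beta \frac{(f')^2}{f^2}\right)\Phi,$$
and $\frac{f'}{f}\Phi' = \left(\alpha \frac{f'}{r f} + \beta \frac{(f')^2}{f^2}\right)\Phi$. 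Adding these, expanding $\left(\frac{\alpha}{r} + \beta \frac{f'}{f}\right)^2 = \frac{\alpha^2}{r^2} + 2\alpha\beta \frac{f'}{r f} + \beta^2 \frac{(f')^2}{f^2}$, and collecting the coefficients of $\frac{1}{r^2}$, $\frac{f'}{r f}$, $\frac{f''}{f}$ and $\frac{(f')^2}{f^2}$ yields exactly the claimed identity: the $\frac{1}{r^2}$-terms combine to $\alpha(\alpha-1)$, the three $\frac{(f')^2}{f^2}$-contributions combine to $\beta^2$, and the two $\frac{f'}{r f}$-contributions combine to $\alpha(2\beta+1)$.

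For the special case $(\alpha,\beta) = (1/2,-1/2)$ I would just substitute: $\alpha(\alpha-1) = -\tfrac14$, the mixed coefficient $\alpha(2\beta+1)$ vanishes, and $\beta = -\tfrac12$, $\beta^2 = \tfrac14$, so the bracket becomes $-\frac{1}{4r^2} - \frac{f''}{2f} + \frac{(f')^2}{4f^2} = -\frac{1}{4r^2} + \frac{(f')^2 - 2 f f''}{4 f^2}$, as stated. I do not expect any genuine obstacle here; the only points worth a sentence of care are the smoothness of $\Phi$ away from the pole (which is exactly where the pole hypothesis and \eqref{eq:Lapharmsp} are used) and the observation that the vanishing of the $\frac{f'}{r f}$-term for this choice of $(\alpha,\beta)$ is precisely what makes $\Phi = \sqrt{r/f}$ the convenient building block in the subsequent proof of Theorem~\ref{thm:PH}.
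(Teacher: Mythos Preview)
Your proof is correct and takes essentially the same approach as the paper: both reduce to the radial formula $\Delta\Phi = \Phi'' + \tfrac{f'}{f}\Phi'$ and carry out the straightforward differentiation, the only cosmetic difference being that you organize the computation via the logarithmic derivative $\Phi'/\Phi = \alpha/r + \beta f'/f$ whereas the paper expands $\Phi'$ and $\Phi''$ directly before factoring out $\Phi$. The special case $(\alpha,\beta)=(1/2,-1/2)$ is handled identically by substitution.
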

\begin{proof}
 For simplicity, we will drop the argument $r$ in our computations. Differentiation yields
  $$ \Phi' = \alpha r^{\alpha -1}f^{\beta}+ \beta r^{\alpha} f^{\beta-1}f' $$
  and
  $$ \Phi'' = \alpha (\alpha -1)r^{\alpha-2} f^{\beta}+ 2\alpha\beta r^{\alpha-1} f^{\beta-1}f'+ \beta (\beta -1)r^{\alpha} f^{\beta -2}(f')^2+ \beta r^{\alpha} f^{\beta-1}f''.$$
  Using \eqref{eq:Lapharmsp} and substituting $\Phi$ back into the Laplace equation leads to
  \begin{multline*} 
  \Delta \Phi = \Phi'' + \frac{f'}{f} \Phi' 
     = \left(\frac{\alpha(\alpha-1)}{r^2}+ \frac{\alpha(2\beta +1)f'}{r f}+ \frac{\beta f''}{f} + \frac{\beta^2 (f')^2}{f^2} \right)\Phi. \qedhere
  \end{multline*}
\end{proof}



In the proof of the main results, we want to apply  the Khas’minski\u{i}-type criterion, Proposition~\ref{prop:khasminski}. Therefore, we need to find a second positive solution. This is done next.

\begin{lemma}\label{lem:Phig}
  Let $(X,g)$ be a non-compact harmonic manifold with volume density $f$, $o \in X$ a pole, $r = d(o,\cdot)$, and $\alpha, \beta\in \mathbb{R}$. Furthermore, set 
  $$ \Phi(r) = r^{\alpha} f^{\beta}(r).$$
  Let $h\colon X\setminus \{o\}\to \mathbb{R}$ be a smooth radial function, then on $X\setminus \{o\}$ we have
  \begin{multline*}
      \Delta (\Phi h)(r) \\
      -  \left(\frac{\alpha(\alpha-1)}{r^2}+ \frac{\alpha(2\beta +1)f'(r)}{r f(r)} + \frac{\beta f(r) f''(r)+\beta^2 (f'(r))^2}{f^2(r)} \right) (\Phi(r) h(r)) \\
      = \Phi (r) \left(h''(r) +\left( \frac{2\alpha}{r}+\frac{(2\beta +1)f'(r)}{f(r)} \right) h'(r)\right).
  \end{multline*}
  In particular, the right-hand side vanishes for $(\alpha, \beta, h(r))=(1/2, -1/2, \ln(r))$.
\end{lemma}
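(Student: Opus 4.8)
The plan is to reduce everything to the one-dimensional formula \eqref{eq:Lapharmsp} for the Laplacian of a radial function. Since both $\Phi(r)$ and $h(r)$ are radial, so is the product $\Phi h$, and hence $\Delta(\Phi h) = (\Phi h)'' + \frac{f'}{f}(\Phi h)'$. Expanding by the Leibniz rule and collecting the terms that depend only on $\Phi$ gives
\begin{equation*}
\Delta(\Phi h) = \left(\Phi'' + \frac{f'}{f}\Phi'\right) h + \Phi\, h'' + \left(2\Phi' + \frac{f'}{f}\Phi\right) h' = (\Delta\Phi)\,h + \Phi\, h'' + \left(2\Phi' + \frac{f'}{f}\Phi\right) h'.
\end{equation*}
By Lemma~\ref{lem:alphabeta}, $\Delta\Phi$ equals $\Phi$ times exactly the bracket that is subtracted on the left-hand side of the asserted identity, once one rewrites $\beta\frac{f''}{f}+\beta^2\frac{(f')^2}{f^2} = \frac{\beta f f'' + \beta^2 (f')^2}{f^2}$. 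Hence the term $(\Delta\Phi)h$ cancels against the subtracted bracket, and only $\Phi h'' + \left(2\Phi' + \frac{f'}{f}\Phi\right)h'$ survives.

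It then remains to identify the coefficient of $h'$. Logarithmic differentiation of $\Phi = r^\alpha f^\beta$ gives $\Phi' = \Phi\left(\frac{\alpha}{r} + \beta\frac{f'}{f}\right)$, whence $2\Phi' + \frac{f'}{f}\Phi = \Phi\left(\frac{2\alpha}{r} + (2\beta+1)\frac{f'}{f}\right)$, which is precisely the factor appearing on the right-hand side of the lemma. This establishes the displayed identity. For the final assertion, substitute $(\alpha,\beta) = (1/2,-1/2)$, so that $2\beta + 1 = 0$ and the coefficient of $h'$ collapses to $1/r$; with $h(r) = \ln r$ we have $h'(r) = 1/r$ and $h''(r) = -1/r^2$, so the right-hand side becomes $\Phi(r)\left(-\frac{1}{r^2} + \frac{1}{r}\cdot\frac{1}{r}\right) = 0$.

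I do not expect a genuine obstacle here: the argument is an elementary computation once one observes that products of radial functions are again radial and invokes Lemma~\ref{lem:alphabeta}. The only point that requires care is the bookkeeping of the first- and second-order terms in the product rule and matching the grouping $\beta f f'' + \beta^2 (f')^2$ to the form of $\Delta\Phi$ provided by Lemma~\ref{lem:alphabeta}; once that identification is made, the cancellation of $(\Delta\Phi)h$ is immediate and the rest is a short verification.
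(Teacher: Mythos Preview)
Your proof is correct and follows essentially the same approach as the paper: both expand $\Delta(\Phi h)$ via the product rule, invoke Lemma~\ref{lem:alphabeta} to cancel the $(\Delta\Phi)h$ term, and then use $\Phi' = \Phi\left(\frac{\alpha}{r} + \beta\frac{f'}{f}\right)$ to simplify the coefficient of $h'$. The only cosmetic difference is that the paper writes the product rule as $\Delta(\Phi h) = \Phi\Delta h + h\Delta\Phi + 2\Phi' h'$ and then substitutes $\Delta h = h'' + \frac{f'}{f}h'$, whereas you expand $(\Phi h)'' + \frac{f'}{f}(\Phi h)'$ directly, but the computations are identical.
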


\begin{proof}
    In the following, we omit again the argument $r$.  By the product rule, we have 
    $$\Delta (\Phi h)= \Phi \Delta h + h \Delta \Phi + 2 \Phi' h'. $$
    Using this rule and Lemma~\ref{lem:alphabeta}, we obtain
    $$ \Delta (\Phi h) -  \left(\frac{\alpha(\alpha-1)}{r^2}+ \frac{\alpha(2\beta +1)f'}{r f}+ \frac{\beta f''}{f} + \frac{\beta^2 (f')^2}{f^2} \right) (\Phi h)= \Phi \Delta h + 2 \Phi' h'. $$
    Using once more that $h$ is radial, i.e., $\Delta h= h'' + \frac{f'}{f}h'$, and that  $$ \Phi' = \alpha r^{\alpha -1}f^{\beta}+ \beta r^{\alpha} f^{\beta-1}f'= \left(\frac{\alpha}{r}+\frac{\beta f'}{f} \right)\Phi, $$ 
    we obtain
    $$\Phi \Delta h + 2 \Phi' h'=  \Phi \left(h'' +\left( \frac{2\alpha}{r}+\frac{(2\beta +1)f'}{f} \right) h'\right).$$
    This shows the first part of the statement. The other assertion follows by a simple computation.
\end{proof}

Finally, we will need the following lemma, which is a special case of the Agmon-Allegretto-Piepenbrink theorem, see \cite[Theorem~2.3]{PT07} and see also Lemma~\ref{lem:AAP}. We remark that the implication in the lemma below is actually an equivalence, i.e., the non-negativity of the energy functional implies also the existence of a positive (super)solution.

\begin{lemma}[see {\cite[Theorem~1.5.12]{DavHeat}}] \label{lem:davies}
    Let $\Omega \subset X$ be a domain in a Riemannian manifold $(X,g)$, $H := - \Delta + V - W$, and $\Phi \in C^\infty(\Omega)$ be a positive solution of $H \Phi = 0$ on $\Omega$. Then we have, for all $\phi \in C_c^\infty(\Omega)$,
    $$ \int_{\Omega} | \nabla \phi |^2 dx \ge \int_{\Omega} (W-V) \phi^2 dx. $$
\end{lemma}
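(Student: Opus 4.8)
The plan is to prove this by the classical \emph{ground state substitution} (also called the ground state transform or Doob transform), exploiting the strict positivity of $\Phi$. Since $\Phi \in C^\infty(\Omega)$ is positive and $\phi \in C_c^\infty(\Omega)$, the quotient $\psi := \phi/\Phi$ is again a smooth function with the same compact support, so $\psi \in C_c^\infty(\Omega)$ and $\phi = \Phi \psi$. First I would expand the gradient by the product rule, $\nabla \phi = \psi \nabla \Phi + \Phi \nabla \psi$, giving
$$ |\nabla \phi|^2 = \psi^2 |\nabla \Phi|^2 + 2 \Phi \psi \langle \nabla \Phi, \nabla \psi\rangle + \Phi^2 |\nabla \psi|^2. $$

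The crucial algebraic observation is that the first two terms combine into a single divergence-type expression: since $\nabla(\Phi \psi^2) = \psi^2 \nabla \Phi + 2\Phi \psi \nabla \psi$, we have $\psi^2 |\nabla \Phi|^2 + 2\Phi\psi \langle \nabla\Phi, \nabla\psi\rangle = \langle \nabla \Phi, \nabla(\Phi\psi^2)\rangle$. Integrating over $\Omega$ and noting that $\Phi\psi^2 = \phi^2/\Phi \in C_c^\infty(\Omega)$, I would integrate by parts (no boundary terms appear because of compact support) to obtain $\int_\Omega \langle \nabla\Phi, \nabla(\Phi\psi^2)\rangle\,dx = -\int_\Omega (\Delta\Phi)\,\Phi\psi^2\,dx$.

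Next I would invoke the hypothesis $H\Phi = (-\Delta + V - W)\Phi = 0$, which gives the pointwise identity $\Delta\Phi = (V - W)\Phi$ on $\Omega$. Substituting this and using $\Phi^2\psi^2 = \phi^2$ turns the integrated-by-parts term into $\int_\Omega (W - V)\phi^2\,dx$. Collecting everything yields the exact identity
$$ \int_\Omega |\nabla\phi|^2\,dx = \int_\Omega (W-V)\phi^2\,dx + \int_\Omega \Phi^2 |\nabla\psi|^2\,dx. $$
Since the last integral is manifestly non-negative, dropping it gives the asserted inequality.

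The computation is short and essentially forced, so there is no deep obstacle. The only points requiring care are (i) verifying that $\psi = \phi/\Phi$ genuinely lies in $C_c^\infty(\Omega)$, which is exactly where the positivity of $\Phi$ is used and is indispensable for the substitution to make sense, and (ii) justifying the integration by parts, which is legitimate precisely because $\Phi\psi^2 = \phi^2/\Phi$ has compact support in $\Omega$ so that no boundary contribution survives. As a byproduct, the displayed identity shows that equality in the inequality forces $\nabla\psi \equiv 0$, i.e. $\phi$ is a constant multiple of $\Phi$, which is consistent with $\Phi$ playing the role of a ground state.
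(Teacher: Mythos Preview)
Your proof is correct and follows essentially the same ground state substitution as the paper: write $\phi=\Phi\psi$, expand $|\nabla\phi|^2$, recognize $\psi^2|\nabla\Phi|^2+2\Phi\psi\langle\nabla\Phi,\nabla\psi\rangle=\langle\nabla\Phi,\nabla(\Phi\psi^2)\rangle$, integrate by parts (Green's formula), and use $-\Delta\Phi=(W-V)\Phi$. The paper's version is simply a terser one-line rendering of exactly this computation.
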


\begin{proof}
  Set $\phi = \Phi \psi$ with $\psi \in C_c^\infty(\Omega)$. Then we have, by Green's formula,
  $$
      \int_\Omega | \nabla \phi |^2 dx \ge \int_\Omega \langle \nabla(\psi^2 \Phi), \nabla \Phi \rangle dx = -\int_\Omega \psi^2 \Phi (\Delta \Phi) dx = \int_\Omega (W-V) \phi^2 dx. 
  $$
\end{proof}

We link the previous lemmata and the Khas’minski\u{i}-type criterion to obtain the following Hardy-type inequality on general harmonic manifolds, which is our Main Theorem B in the Introduction. 

\begin{theorem}[Hardy-type inequality on harmonic manifolds]\label{thm:HardyHarmonic}
Let $(X,g)$ be a non-compact harmonic manifold with volume density $f=f(r)$, $o \in X$ a pole, and $r = d(o,\cdot)$. 
Then, the Schr\"{o}dinger operator $-\Delta + (V-W)$ on $X$ with
$$ V(r):= \frac{(f'(r))^2-2f(r)f''(r)}{4f^2(r)} \quad \text{and} \quad W(r):= \frac{1}{4 r^2}$$
is critical in $X\setminus \{o\}$ with ground state $\sqrt{r/ f(r)}$.  Moreover, the weight $W(r)=\frac{1}{4 r^2}$ is optimal in $X\setminus \{o\}$ with respect to the operator $-\Delta +V$.

Furthermore, for all $\phi\in C_c^\infty(X)$, we have
\begin{equation} \label{eq:hardymainBineq}
\int_{X }\vert \nabla \phi \vert^2 dx\geq \frac{1}{4} \int_{X} \frac{\phi^2}{r^2}dx   + \frac{1}{4}\int_{X}\frac{2f(r)f''(r)-(f'(r))^2}{f^2(r)}\cdot \phi^2 dx.
\end{equation}
\end{theorem}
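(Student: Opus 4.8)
The plan is to combine the explicit Laplacian computation from Lemma~\ref{lem:alphabeta}, the ground-state construction via Lemma~\ref{lem:Phig}, the criticality criterion (Proposition~\ref{prop:khasminski}), and the Agmon--Allegretto--Piepenbrink-type inequality (Lemma~\ref{lem:davies}). First I would set $\Phi(r) = \sqrt{r/f(r)} = r^{1/2}f^{-1/2}(r)$, i.e. the case $(\alpha,\beta) = (1/2,-1/2)$ of Lemma~\ref{lem:alphabeta}. That lemma immediately gives
$$ \Delta \Phi = \left( \frac{(f')^2 - 2ff''}{4f^2} - \frac{1}{4r^2}\right)\Phi = -(V - W)\Phi $$
on $X \setminus \{o\}$, with $V$ and $W$ as in the statement; hence $\Phi$ is a positive solution of $(-\Delta + V - W)\Phi = 0$ there. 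Feeding this $\Phi$ into Lemma~\ref{lem:davies} with $\Omega = X \setminus \{o\}$ yields the Hardy-type inequality \eqref{eq:hardymainBineq} for all $\phi \in C_c^\infty(X \setminus \{o\})$. To upgrade from $C_c^\infty(X \setminus \{o\})$ to all of $C_c^\infty(X)$ one needs that the puncture at $o$ is removable: since $X$ has dimension $n = p+q+1 \ge 2$ (indeed $\ge 4$ for Damek--Ricci spaces, but $\ge 3$ suffices), a standard capacity/cutoff argument shows $C_c^\infty(X \setminus \{o\})$ is dense in $C_c^\infty(X)$ with respect to the relevant Sobolev norm, so the inequality extends. (One should note the weight $W + \text{(the }f\text{-term)}$ is locally integrable near $o$ because $1/r^2$ is integrable in dimension $\ge 3$ and the $f$-term is bounded near $o$.)

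Next, for criticality I would verify the two characterizing properties. The equivalence quoted in the Definition says that $-\Delta + (V-W)$ is critical in $X \setminus \{o\}$ iff it admits a unique (up to scaling) positive supersolution, equivalently iff the solution $\Phi$ has minimal growth both at infinity and at $o$. This is exactly where Proposition~\ref{prop:khasminski} and Lemma~\ref{lem:Phig} come in: with $(\alpha,\beta,h(r)) = (1/2,-1/2,\ln r)$, Lemma~\ref{lem:Phig} tells us that $\widetilde\Phi(r) := \Phi(r)\ln r$ is a \emph{second} positive solution of $(-\Delta + V - W)\widetilde\Phi = 0$ on the region where $\ln r > 0$, i.e. away from the ball of radius $1$ (and one can translate/rescale to arrange positivity on a punctured neighbourhood of $o$ by using $|\ln r|$ or $c - \ln r$ as appropriate near $o$). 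Then I compute the two ratios: near infinity, $\Phi(x)/\widetilde\Phi(x) = 1/\ln r \to 0$, so by Proposition~\ref{prop:khasminski} $\Phi$ has minimal growth at infinity; near $o$, using $-\ln r \to +\infty$ as $r \to 0$, the analogous ratio $\Phi/\Phi(-\ln r) = -1/\ln r \to 0$, so $\Phi$ has minimal growth at $o$. Hence $\Phi$ is a global minimal solution, which gives criticality, and $\Phi = \sqrt{r/f(r)}$ is the ground state $u_{\Agmon}$.

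Finally, for optimality of $W(r) = 1/(4r^2)$ with respect to $-\Delta + V$, criticality already yields that $W$ cannot be replaced by any larger $\widetilde W \ge W$, $\widetilde W \ne W$ (this is part (1) of the Definition). What remains for genuine \emph{optimality} is null-criticality: $u_{\Agmon} = \sqrt{r/f(r)} \notin L^2(X \setminus \{o\}, W\,dx)$. I would check this by the coarea formula: $\int_{X \setminus \{o\}} \frac{r}{f(r)} \cdot \frac{1}{4r^2}\,\omega_n f(r)\,dr = \frac{\omega_n}{4}\int_0^\infty \frac{dr}{r} = \infty$, where $\vol(S_r) = \omega_n f(r)$ is used; the divergence at both endpoints ($r \to 0$ and $r \to \infty$) confirms $u_{\Agmon} \notin L^2(W\,dx)$, so $-\Delta + (V-W)$ is null-critical. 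As noted after the Definition, by \cite[Corollary~3.7]{KP20} null-criticality implies optimality at infinity, so $W$ is an optimal Hardy weight. The main obstacle I anticipate is not any single computation — those are all routine given Lemmas~\ref{lem:alphabeta}--\ref{lem:davies} — but rather the care needed with \emph{signs and domains of positivity} for the second solution $\widetilde\Phi = \Phi\ln r$: $\ln r$ changes sign at $r = 1$, so one must be slightly careful to produce genuinely positive comparison supersolutions on a punctured neighbourhood of $o$ and on a neighbourhood of infinity separately, and to confirm that the relevant limits of ratios are indeed $0$ (not just bounded) in each regime. The removability of the puncture at $o$ when passing from $C_c^\infty(X\setminus\{o\})$ to $C_c^\infty(X)$ is the other point deserving a careful, if standard, justification.
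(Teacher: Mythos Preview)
Your proposal is correct and follows essentially the same route as the paper: Lemma~\ref{lem:alphabeta} for $\Phi=r^{1/2}f^{-1/2}$, Lemma~\ref{lem:Phig} to produce the second solution $\Phi\ln r$, the Khas'minski\u{i} criterion applied with $\Phi\ln r$ near infinity and $-\Phi\ln r$ near $o$, the null-criticality check via $\int_0^\infty dr/r=\infty$, Lemma~\ref{lem:davies} for the inequality on $X\setminus\{o\}$, and a capacity argument to pass to $C_c^\infty(X)$. The only point the paper makes more explicit than you do is the passage from ``$\Phi$ is a global minimal solution'' to ``$H$ is critical with ground state $\Phi$'': this is not immediate from the definitions and the paper invokes \cite[Theorems~4.2 and~5.8]{PT08} for it, so you should cite that (or an equivalent result) rather than treat it as automatic.
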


\begin{proof} The first part of the proof uses the same ideas as the proof of \cite[Theorem~6.2]{DFP14} doing the necessary changes. The second part is motivated by \cite{BGG17}. Here are the details:

Let us denote $H:=-\Delta +V-W$. By Lemma~\ref{lem:alphabeta} and Lemma~\ref{lem:Phig} applied with $(\alpha, \beta) =(1/2, -1/2)$, we see that $\Phi (r)= r^{1/2}f^{-1/2}(r)$ and $\Phi (r) \cdot \ln (r)$, are solutions on $X\setminus \{o\}$ with respect to $H$. Moreover, we obviously have
 $$\lim_{r\to \infty}\frac{\Phi (r)}{\Phi (r)\ln(r)}= 0. $$
 By the Khas’minski\u{i}-type criterion, Proposition~\ref{prop:khasminski}, we get that $\Phi$ is a positive solution of minimal growth near infinity in $X$.
 
 By the same argument and using $-\Phi(r)\ln(r)$ instead of $\Phi(r)\ln(r)$, we see that $\Phi$ has minimal growth near $o$ in $X\setminus \{o\}$.

 Since $\Phi$ is a solution in $X\setminus\{o\}$ and of minimal growth near infinity and $o$, it is a global minimal solution in $X\setminus\{o\}$. By \cite[Theorems~4.2 and~5.8]{PT08}, it has to be the (Agmon) ground state in $X\setminus\{o\}$ which is the unique positive supersolution of $H$ in $X\setminus\{o\}$. Thus, $H$ is critical in $X\setminus \{o\}$.

Let now $W(r)=1/(2r)^2$. Since 
$$ \Vert \Phi \Vert^2_{L^2(X\setminus \{o\}, W dx)} = \omega_n \int_0^\infty (r^{1/2}f(r)^{-1/2})^2 f(r) \frac{dr}{(2r)^2} = \infty, $$
we conclude that $-\Delta +V$ is null-critical with respect to $W$, and thus $W$ is an optimal Hardy weight of $-\Delta +V$ in $X\setminus \{o\}$.

Inequality \eqref{eq:hardymainBineq}
can be seen as follows: 
Since $\Phi$ is a strictly 
positive solution of $H \Phi = 0$ on $X \setminus \{ o \}$, we obtain the desired inequality on $C_c^\infty(X\setminus \{ o\})$ by Lemma \ref{lem:davies}.
Next we briefly explain how to extend the inequality from $C_c^\infty(X\setminus \{ o\})$ to $C_c^\infty(X)$  (this is a standard argument but we could not find a good reference in the Schr\"{o}dinger operator setting; for the $p$-Laplacian see \cite[Appendix A]{DAD}): By \cite[Theorem~4.5]{PT08}, the capacity of $\{o\}$ vanishes, i.e., there is a sequence $(\phi_j)$ in $C_0^{\infty}(X)$ such that $0\leq \phi_j \leq 1$, $\phi_j=1$ in a neighbourhood of $\{o\}$ and $\phi_j\to 0$ in $D_0(X)=\overline{C_c^{\infty}(X)}^{\vert \cdot \vert_0}$ where $\vert \cdot \vert_0$ denotes the form norm. The latter is defined via $\vert \phi \vert_0^2:= E_V(\phi)+\vert \cdot \vert^2$. In particular, $E_V(\phi_j)\to 0$. We set $\psi_j:=(1-\phi_j)\psi$, where $\psi$ is an arbitrary function in $C_c^{\infty}(X)$. Now it follows $\psi_j\to \psi$ in $D_0(X\setminus \{o\})$ similarly as in \cite[Appendix A]{DAD}): One can use dominated convergence for the potential part and then proceed with mild changes (note that the last step there is just another application of \cite[Theorem~4.5]{PT08}).
\end{proof}

We now turn to Damek-Ricci spaces and explicitly calculate the term $\frac{2ff''-(f')^2}{f^2}$. Later we will need similar calculations, and therefore, we state it here more generally. 

\begin{lemma}\label{lem:Hilfe}
Let $X^{p,q}$ be a Damek-Ricci space, $o \in X^{p,q}$ a pole and $r=d(o,\cdot)$. Let $f(r)$ be the volume density of $X^{p,q}$. Then, for all $a, b\in \R$ and $r>0$, we have
  \begin{multline*}
    a \left(\frac{f'(r)}{f(r)} \right)^2- b \frac{f''(r)}{f(r)}\\
    = 4(a-b)\lambda_0(X^{p,q}) + \frac{q(q(a-b)+b)}{\sinh^2(r)} + \frac{p((a-b)(p+2q)+b)}{4\sinh^2(r/2)}.
\end{multline*}
\end{lemma}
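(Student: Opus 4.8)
The plan is to prove Lemma~\ref{lem:Hilfe} by a direct computation starting from the explicit expression \eqref{eq:fprimef} for the logarithmic derivative of the volume density. First I would record the two building blocks: from \eqref{eq:fprimef} we have
$$ \frac{f'(r)}{f(r)} = \frac{p}{2}\coth(r/2) + q\coth r, $$
and differentiating this identity once more gives
$$ \frac{f''(r)}{f(r)} - \left(\frac{f'(r)}{f(r)}\right)^2 = \left(\frac{f'}{f}\right)'(r) = -\frac{p}{4\sinh^2(r/2)} - \frac{q}{\sinh^2 r}, $$
using $(\coth t)' = -1/\sinh^2 t$. Hence $f''/f = (f'/f)^2 - \frac{p}{4\sinh^2(r/2)} - \frac{q}{\sinh^2 r}$, which already reduces the left-hand side $a(f'/f)^2 - b\,f''/f$ to $(a-b)(f'/f)^2 + b\bigl(\frac{p}{4\sinh^2(r/2)} + \frac{q}{\sinh^2 r}\bigr)$.

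Next I would expand $(f'/f)^2$. Using the second form in \eqref{eq:fprimef}, namely $f'/f = \frac{p+2q}{2}\coth(r/2) - \frac{q}{\sinh r}$, squaring gives three terms, and the cross term can be simplified via the double-angle identity $\sinh r = 2\sinh(r/2)\cosh(r/2)$, so that $\coth(r/2)/\sinh r = \cosh(r/2)/(2\sinh^2(r/2)) \cdot \frac{1}{\cosh(r/2)}$... more cleanly, $\frac{\coth(r/2)}{\sinh r} = \frac{\cosh(r/2)}{\sinh(r/2)\cdot 2\sinh(r/2)\cosh(r/2)} = \frac{1}{2\sinh^2(r/2)}$. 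Together with $\coth^2(r/2) = 1 + \frac{1}{\sinh^2(r/2)}$ and $\frac{1}{\sinh^2 r}$ left as is, one collects the constant term $\frac{(p+2q)^2}{4} = 4\lambda_0(X^{p,q})$ (recall $\lambda_0 = (p+2q)^2/16$), a term with $\frac{1}{\sinh^2(r/2)}$, and a term with $\frac{1}{\sinh^2 r}$. Substituting this back and combining with the $b$-contribution computed above, then matching coefficients, should yield exactly $4(a-b)\lambda_0(X^{p,q}) + \frac{q(q(a-b)+b)}{\sinh^2 r} + \frac{p((a-b)(p+2q)+b)}{4\sinh^2(r/2)}$.

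There is no genuine obstacle here; this is an entirely elementary trigonometric-hyperbolic identity, and the only thing to be careful about is the bookkeeping of the three types of terms (constant, $1/\sinh^2(r/2)$, $1/\sinh^2 r$) and the correct use of the half-angle identity to eliminate the cross term and to rewrite $\coth^2(r/2)$. A sensible way to present it in the paper would be to first establish the auxiliary identity for $(f'/f)'$, then state the expansion of $(f'/f)^2$ as an intermediate display, and finally assemble the claimed formula; the special cases needed later (e.g.\ $a = \tfrac14, b = \tfrac12$ to recover $V(r) = -\lambda_0$ plus the Hardy weight terms of Theorem~\ref{thm:PH}) then follow by plugging in.
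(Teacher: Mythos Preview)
Your proposal is correct and follows essentially the same route as the paper: a direct computation from the explicit volume density, expanding everything into a constant term, a $1/\sinh^2(r/2)$ term, and a $1/\sinh^2 r$ term, then using $\lambda_0(X^{p,q})=(p+2q)^2/16$. The only organizational difference is that the paper works in the $\coth(r/2),\tanh(r/2)$ basis and computes $f''/f$ directly, whereas you use the logarithmic-derivative identity $f''/f=(f'/f)^2+(f'/f)'$ together with the $\coth(r/2),1/\sinh r$ form of \eqref{eq:fprimef}; this is a slightly tidier bookkeeping choice but not a different method.
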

\begin{proof} Recall that $$ f(r) = 2^{p+q}(\sinh(r/2))^{p+q}(\cosh(r/2))^q. $$
    Straightforward computations yield
\begin{align*}
    f'(r)=\frac{f(r)}{2}\bigl( (p+q)\coth (r/2)+ q \tanh (r/2) \bigr).
\end{align*}
and 
\begin{align*}
    f''(r)= \frac{f(r)}{4}\left( \bigl( (p+q)\coth (r/2)+ q \tanh (r/2) \bigr)^2 - \frac{p+q}{\sinh^2(r/2)} + \frac{q}{\cosh^2(r/2)} \right).
\end{align*}
Note, furthermore, that for all $\alpha, \beta\in \mathbb{R}$
\begin{align*}
\frac{\alpha}{\sinh^2(r/2)}- \frac{\beta}{\cosh^2(r/2)}= \frac{\alpha -\beta}{\sinh^2(r/2)}+ \frac{4\beta }{\sinh^2(r)}
\end{align*}
and
\begin{align*}
  \bigl( \alpha\coth (r/2)+ \beta \tanh (r/2) \bigr)^2=(\alpha+\beta)^2+ \frac{4\beta^2}{\sinh^2(r)}+\frac{\alpha^2-\beta^2}{\sinh^2(r/2)}.
\end{align*}
Hence,
\begin{multline*}
    a \left(\frac{f'(r)}{f(r)} \right)^2- b \frac{f''(r)}{f(r)}\\
    = \frac{(a-b)(p+2q)^2}{4} + \frac{q(q(a-b)+b)}{\sinh^2(r)} + \frac{p((a-b)(p+2q)+b)}{4\sinh^2(r/2)}.
\end{multline*}
Using the fact that $\lambda_0(X^{p,q})=(p+2q)^2/16$, the statement follows.
\end{proof}

Note that we obtain from this lemma for the special choice $(a,b) = (1/4,1/2)$:
\begin{equation} \label{eq:ab1412}
    \frac{2f(r)f''(r)-(f'(r))^2}{4(f(r))^2} = \lambda_0(X^{p,q})+ \frac{q(q-2)}{4\sinh^2(r)} + \frac{p(p+2q-2)}{16\sinh^2(r/2)}.
\end{equation}

\begin{lemma}\label{lem:DRf}
Let $X^{p,q}$ be a Damek-Ricci space, $o \in X^{p,q}$ a pole and $r=d(o,\cdot)$. Let $f(r)$ be the volume density of $X^{p,q}$ and
$$ \Phi(r) = r^{1/2}f^{-1/2}(r). $$
Then we have on $X^{p,q}\setminus \{o\}$
$$ - \Delta \Phi(r) - \lambda_0(X^{p,q}) \Phi(r) = \left(\frac{p(p+2q-2)}{16 \sinh^2(r/2)} + \frac{q(q-2)}{4\sinh^2 (r)} + \frac{1}{4r^2} \right)\Phi(r). $$
\end{lemma}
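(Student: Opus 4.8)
The plan is to deduce this identity directly from the two general computations already established in the excerpt, rather than re-differentiating the explicit volume density \eqref{eq:voldens} from scratch. The function $\Phi(r) = r^{1/2} f^{-1/2}(r)$ is precisely the function $r^{\alpha} f^{\beta}(r)$ of Lemma~\ref{lem:alphabeta} with $(\alpha,\beta) = (1/2,-1/2)$, so the ``in particular'' clause of that lemma gives, on $X^{p,q} \setminus \{o\}$,
$$ \Delta \Phi(r) = \left( \frac{(f'(r))^2 - 2 f(r) f''(r)}{4 f(r)^2} - \frac{1}{4r^2} \right) \Phi(r), $$
and hence, after negating,
$$ -\Delta \Phi(r) = \left( \frac{2 f(r) f''(r) - (f'(r))^2}{4 f(r)^2} + \frac{1}{4r^2} \right) \Phi(r). $$

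Next I would invoke \eqref{eq:ab1412}, i.e.\ the case $(a,b) = (1/4,1/2)$ of Lemma~\ref{lem:Hilfe}, which identifies the coefficient $\frac{2 f(r) f''(r) - (f'(r))^2}{4 f(r)^2}$ appearing above with
$$ \lambda_0(X^{p,q}) + \frac{q(q-2)}{4 \sinh^2(r)} + \frac{p(p+2q-2)}{16 \sinh^2(r/2)}. $$
Substituting this into the expression for $-\Delta \Phi(r)$ and then subtracting $\lambda_0(X^{p,q}) \Phi(r)$ from both sides yields exactly the claimed formula. That is the whole argument: no fresh differentiation of $f$ is needed, since both $\Delta\Phi$ and the relevant combination of $f, f', f''$ have already been worked out in Lemmas~\ref{lem:alphabeta} and~\ref{lem:Hilfe}.

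Because everything reduces to substitution, there is no genuine obstacle here; the only points that demand a little care are the sign bookkeeping when passing from $\Delta\Phi$ to $-\Delta\Phi$ and the consistent use of $\lambda_0(X^{p,q}) = (p+2q)^2/16$, which is already built into \eqref{eq:ab1412}. As a consistency check one could alternatively verify the identity by a direct computation starting from $f(r) = 2^p (\sinh(r/2))^p (\sinh r)^q$ together with \eqref{eq:fprimef}, but this is strictly more laborious and essentially re-proves Lemmas~\ref{lem:alphabeta} and~\ref{lem:Hilfe} in a special case, so reusing those lemmas is the natural route.
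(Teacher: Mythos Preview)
Your proposal is correct and follows exactly the same route as the paper: the paper's proof is a one-line reference to Lemma~\ref{lem:alphabeta} with $(\alpha,\beta)=(1/2,-1/2)$ together with \eqref{eq:ab1412}. There is nothing to add.
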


\begin{proof}
The statement of the lemma is a direct application of Lemma \ref{lem:alphabeta} by choosing $(\alpha,\beta) =(1/2, -1/2)$, and of \eqref{eq:ab1412}.
\end{proof}

Finally, we are in a position to prove Theorem~\ref{thm:PH}.

\begin{proof}[Proof of Theorem~\ref{thm:PH}]
    The result follows by applying Lemma~\ref{lem:DRf} and Theorem~\ref{thm:HardyHarmonic}. 
\end{proof} 

\subsection{Applications of the Poincar{\'e}-Hardy-type inequality}\label{sec:Applications}
This subsection is devoted to two well-known applications of Hardy-type inequalities, which can be obtained easily in combination with the Cauchy-Schwarz inequality: an uncertainty-type principle and a Rellich-type inequality. 

\subsubsection{A Heisenberg-Pauli-Weyl's Uncertainty Principle} 

Here, we briefly show a shifted version of the famous Heisenberg-Pauli-Weyl uncertainty principle. It asserts in its classical form, that the position and momentum of a particle can not be determined simultaneously. For further information confer e.g. \cite[Subsection~1.6]{BEL15} for a detailed discussion in the Euclidean space, \cite{KOe09, KOe13, K18Heisenberg} for  Riemannian manifolds, or \cite[Section~3]{BGGP20} for a recent version in the hyperbolic space.

Recall that the dimension of a Damek-Ricci space is at least 4, i.e., $p+q \geq 3$.

\begin{corollary}\label{cor:HPW}
    Let $X^{p,q}$ be a Damek-Ricci space with $q\not\in \{0,2\}$ and $p\neq 0$, $o \in X^{p,q}$ a pole, and $r = d(o,\cdot)$. Then, we have for all $\phi \in C_c^{\infty}(X^{p,q})$,
\begin{multline*}\left(\int_{X^{p,q}} \vert \nabla \phi \vert^2 dx - 
\lambda_0(X^{p,q}) \int_{X^{p,q}} \phi^2\right)\left( \int_{X^{p,q}} g(r)r^2\phi^2 dx\right)\\
\geq  \frac{1}{4} \left(\int_{X^{p,q}} \phi^2 dx\right)^2,
\end{multline*}
with a function $0 < g < 1$. 
\end{corollary}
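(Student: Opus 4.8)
The plan is to derive Corollary~\ref{cor:HPW} directly from Theorem~\ref{thm:PH} together with the Cauchy--Schwarz inequality, in the standard way one obtains Heisenberg-type uncertainty principles from Hardy-type inequalities. Since $q \notin \{0,2\}$ we have $q(q-2) > 0$, and since $p \neq 0$ we have $p(p+2q-2) > 0$ (using $p+q \geq 3$, so $p+2q-2 > 0$); hence all three weight terms on the right-hand side of Theorem~\ref{thm:PH} are nonnegative, and in particular we may discard the last two and keep only
$$ \int_{X^{p,q}} |\nabla \phi|^2\, dx - \lambda_0(X^{p,q}) \int_{X^{p,q}} \phi^2\, dx \ \geq\ \frac{1}{4} \int_{X^{p,q}} \frac{\phi^2}{r^2}\, dx . $$

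The key step is then a Cauchy--Schwarz estimate for the $L^2$-norm. Writing $\phi^2 = \frac{|\phi|}{r\sqrt{g(r)}} \cdot r\sqrt{g(r)}\,|\phi|$ (with $g$ to be specified) and applying Cauchy--Schwarz gives
$$ \left( \int_{X^{p,q}} \phi^2\, dx \right)^2 \ \leq\ \left( \int_{X^{p,q}} \frac{\phi^2}{r^2 g(r)}\, dx \right) \left( \int_{X^{p,q}} g(r)\, r^2 \phi^2\, dx \right). $$
To close the argument against the Hardy inequality above, one wants $\frac{1}{r^2 g(r)} \leq \frac{1}{r^2}$, i.e.\ $g(r) \geq 1$, which is the wrong direction; instead one should split $\frac{1}{4r^2}$ itself, or rather choose $g$ so that $\frac{1}{r^2 g(r)}$ is bounded by $\frac{1}{4 r^2}$ times a constant that fits. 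The clean route is: apply Cauchy--Schwarz in the form $\left(\int \phi^2\right)^2 \le \left(\int \frac{\phi^2}{4r^2}\right)\left(\int 4 r^2 \phi^2\right)$, which already yields the claimed inequality with the literal weight $r^2$; the refinement to a strict weight $g(r) r^2$ with $0 < g < 1$ comes from the fact that the Hardy inequality in Theorem~\ref{thm:PH} has \emph{surplus} — namely the two discarded nonnegative terms involving $\sinh^2(r/2)$ and $\sinh^2(r)$ — which can be traded in. Concretely, since $\frac{1}{4\sinh^2(r/2)} > 0$ and comparing $\frac{1}{r^2}$ with a slightly larger multiple of an integrable-against-$\phi^2$ combination, one shows there is a radial function $g$ with $0 < g(r) < 1$ for all $r$ such that
$$ \frac{1}{4} \int \frac{\phi^2}{r^2}\,dx + \frac{p(p+2q-2)}{16}\int \frac{\phi^2}{\sinh^2(r/2)}\,dx + \frac{q(q-2)}{4}\int \frac{\phi^2}{\sinh^2 r}\,dx \ \geq\ \frac{1}{4}\int \frac{\phi^2}{g(r)\, r^2}\, dx , $$
because $\frac{1}{g(r) r^2} \le \frac{1}{r^2} + c_1 \frac{1}{\sinh^2(r/2)} + c_2 \frac{1}{\sinh^2 r}$ can be arranged pointwise with $g < 1$ (the extra terms decay exponentially while $1/r^2$ decays polynomially, so only the behaviour near $r=0$ and a uniform bound need checking). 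Combining this with Cauchy--Schwarz as above yields exactly the stated inequality.

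The main obstacle is the bookkeeping for the function $g$: one must exhibit an explicit (or at least existence-justified) radial $g$ with $0 < g < 1$ everywhere, such that $\frac{1}{g(r) r^2}$ is dominated by the full right-hand weight $W(r)$ of Theorem~\ref{thm:PH} up to the factor $1/4$. Near $r \to \infty$ this is immediate since $W(r) r^2 \to \infty$ faster than any bounded multiple would require (indeed $\frac{1}{\sinh^2(r/2)}$, $\frac{1}{\sinh^2 r}$ are positive there and $1/r^2$ already suffices), so one can even take $g$ close to $1$; near $r \to 0$ one uses $\sinh(r/2) \sim r/2$, so $\frac{1}{\sinh^2(r/2)} \sim \frac{4}{r^2}$, giving a genuine strict surplus over $\frac{1}{r^2}$ provided $p(p+2q-2) > 0$, which holds. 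Patching these regimes (e.g.\ by taking $g$ to be a suitable smooth interpolation bounded away from $0$ and strictly below $1$) completes the proof; the estimate is not sharp and so there is ample room, making this step routine once the inequality $W(r) r^2 \ge \tfrac14 \cdot \tfrac1{g(r)}$ with $g<1$ is set up correctly.
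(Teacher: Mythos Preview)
Your overall approach is the paper's: combine Theorem~\ref{thm:PH} with Cauchy--Schwarz. But there is a genuine error in your justification, and the execution is needlessly indirect.

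\textbf{The error.} Your claim ``since $q \notin \{0,2\}$ we have $q(q-2) > 0$'' is false for $q=1$, which is a valid Damek--Ricci parameter (e.g.\ the complex hyperbolic spaces $\H^n(\C)=X^{2(n-1),1}$). For $q=1$ the third weight term $\frac{q(q-2)}{4\sinh^2 r}$ is \emph{negative}, so your ``discard the last two nonnegative terms'' step and your later ``surplus'' reasoning both fail as written. What one actually needs is the pointwise inequality $W(r) > \frac{1}{4r^2}$, and for $q=1$ this requires a short computation rather than sign-checking term by term: using $\sinh^2 r = 4\sinh^2(r/2)\cosh^2(r/2) \ge 4\sinh^2(r/2)$ one finds, for $q=1$ and $p \ge 2$,
\[
\frac{p^2}{16\sinh^2(r/2)} - \frac{1}{4\sinh^2 r} \ \ge\ \frac{p^2-1}{16\sinh^2(r/2)} \ >\ 0,
\]
so indeed $W(r) > \frac{1}{4r^2}$ in all cases covered by the hypotheses.

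\textbf{The overcomplication.} Once $W(r) > \frac{1}{4r^2}$ is established, there is no need for asymptotic analysis or ``patching'' to produce $g$. The paper simply \emph{defines}
\[
g(r) := \frac{1}{4r^2 W(r)},
\]
which automatically satisfies $0 < g < 1$, and applies Cauchy--Schwarz with the pair $(W,W^{-1})$:
\[
\Bigl(\int \phi^2\Bigr)^{2} \le \Bigl(\int W\phi^2\Bigr)\Bigl(\int W^{-1}\phi^2\Bigr) \le \Bigl(\int |\nabla\phi|^2 - \lambda_0\!\int\phi^2\Bigr)\Bigl(\int 4\,g(r)\,r^2\phi^2\Bigr),
\]
the last step being Theorem~\ref{thm:PH}. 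This is three lines; your detour through a weakened Hardy inequality (which is not even valid for $q=1$) and an existence argument for $g$ is unnecessary.
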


In fact, the function $g$ in the corollary is explicitely given by
\begin{equation} \label{eqref:g} 
g(r) = \frac{1}{4r^2W(r)}
\end{equation}
with
$$ W(r) := \frac{1}{4r^2}   + \frac{p(p+2q-2)}{16}\frac{1}{\sinh^2(r/2)} + \frac{q(q-2)}{4}\frac{1}{\sinh^2 (r)}. $$
Note that we have $W(r) > 1/(2r)^2$, even for the smallest choice $p \ge 2$ and $q=1$. 

\begin{proof}
    By the Cauchy-Schwarz inequality and the Poincar{\'e}-Hardy-type inequality (Theorem~\ref{thm:PH}), we obtain
\begin{multline*}
   \left(\int_{X^{p,q}} \phi^2 dx\right)^2 \leq \left(\int_{X^{p,q}} W\phi^2 dx\right)\left( \int_{X^{p,q}} \frac{\phi^2}{W} dx\right) \\
   \leq \left(\int_{X^{p,q}} \vert \nabla \phi \vert^2 dx - \lambda_0(X^{p,q}) \int_{X^{p,q}} \phi^2 dx\right)\left( \int_{X^{p,q}} \frac{\phi^2}{W} dx\right).
\end{multline*}
Since $1/W(r) = 4 g(r)r^2 $ we obtain the desired inequality.
\end{proof}

\subsubsection{A Poincar{\'e}-Rellich-type inequality}
Next, we show a shifted Rellich-type inequality. 
For more details on the history and generalisations  of this inequality in different settings, we suggest the papers \cite{BGGP20, KPP21Rellich, KOe09, KOe13} and the monograph \cite{BEL15}, and references therein. 

\begin{corollary}
    Let $X^{p,q}$ be a Damek-Ricci space with $q\not\in \{0,2\}$ and $p\neq 0$, $o \in X^{p,q}$ a pole, and $r = d(o,\cdot)$. Then, we have for all $\phi \in C_c^{\infty}(X^{p,q})$,
    $$ \int_{X^{p,q}} \frac{\phi^2}{16 g(r)r^2} dx
 \leq   \int_{X^{p,q}} g(r)r^2 (-\Delta \phi - \lambda_0(X^{p,q})\phi)^2 dx, $$
with the function $0 < g < 1$ from \eqref{eqref:g}.
\end{corollary}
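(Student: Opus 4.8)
The plan is to derive the Poincar\'e--Rellich-type inequality from the Poincar\'e--Hardy-type inequality (Theorem~\ref{thm:PH}) by combining it with the Cauchy--Schwarz inequality, exactly in the spirit of the proof of Corollary~\ref{cor:HPW}. First I would recall that, with the Hardy weight
$$ W(r) = \frac{1}{4r^2} + \frac{p(p+2q-2)}{16\sinh^2(r/2)} + \frac{q(q-2)}{4\sinh^2(r)}, $$
Theorem~\ref{thm:PH} reads $\int_{X^{p,q}} |\nabla\phi|^2\,dx - \lambda_0(X^{p,q})\int_{X^{p,q}}\phi^2\,dx \ge \int_{X^{p,q}} W\phi^2\,dx$ for all $\phi \in C_c^\infty(X^{p,q})$. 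Under the hypotheses $q \notin \{0,2\}$ and $p \neq 0$ we have $W > 0$ everywhere, so $1/W$ is a well-defined positive weight, and by \eqref{eqref:g} we have $1/W(r) = 4 g(r) r^2$ with $0 < g < 1$.

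The key step is to apply the previous inequality not to $\phi$ itself but to a function built from $-\Delta\phi - \lambda_0 \phi$. More precisely, for $\phi \in C_c^\infty(X^{p,q})$ I would integrate by parts to write
$$ \int_{X^{p,q}} W\phi^2\,dx \le \int_{X^{p,q}} |\nabla\phi|^2\,dx - \lambda_0(X^{p,q})\int_{X^{p,q}}\phi^2\,dx = \int_{X^{p,q}} \phi\,(-\Delta\phi - \lambda_0(X^{p,q})\phi)\,dx, $$
where the last equality uses Green's formula (valid since $\phi$ is compactly supported and smooth). Now I split the integrand on the right as $\phi\,(-\Delta\phi - \lambda_0\phi) = (\sqrt{W}\,\phi)\cdot\big(\tfrac{1}{\sqrt{W}}(-\Delta\phi - \lambda_0\phi)\big)$ and apply the Cauchy--Schwarz inequality:
$$ \int_{X^{p,q}} \phi\,(-\Delta\phi - \lambda_0\phi)\,dx \le \left(\int_{X^{p,q}} W\phi^2\,dx\right)^{1/2}\left(\int_{X^{p,q}} \frac{(-\Delta\phi - \lambda_0\phi)^2}{W}\,dx\right)^{1/2}. $$
Combining the two displays gives $\int W\phi^2\,dx \le \left(\int W\phi^2\,dx\right)^{1/2}\left(\int (-\Delta\phi - \lambda_0\phi)^2/W\,dx\right)^{1/2}$. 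If $\int W\phi^2\,dx = 0$ the claimed inequality is trivial (both sides would be controlled accordingly), and otherwise I divide through by $\left(\int W\phi^2\,dx\right)^{1/2}$ and square, obtaining
$$ \int_{X^{p,q}} W\phi^2\,dx \le \int_{X^{p,q}} \frac{(-\Delta\phi - \lambda_0(X^{p,q})\phi)^2}{W}\,dx. $$
Finally I substitute $W(r) = \frac{1}{16 g(r) r^2}$ on the left (this is just \eqref{eqref:g} rearranged: $1/W = 4gr^2$ means $W = 1/(4gr^2)$, and one should double-check the numerical factor so that $W\phi^2 = \phi^2/(16 g r^2)$ matches the statement — if $1/W = 4gr^2$ then $W = 1/(4gr^2)$, so the factor $16$ in the corollary must come from the precise normalization of $g$, which I would verify against \eqref{eqref:g}) and $1/W = 4 g(r) r^2$ on the right, yielding exactly
$$ \int_{X^{p,q}} \frac{\phi^2}{16 g(r) r^2}\,dx \le \int_{X^{p,q}} g(r) r^2\,(-\Delta\phi - \lambda_0(X^{p,q})\phi)^2\,dx. $$

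The only genuine subtlety, and the step I expect to require the most care, is the bookkeeping of the constant factors relating $W$, $1/W$, and $g$ via \eqref{eqref:g}: the statement of the corollary has the asymmetric factors $16$ and $1$, whereas the naive Cauchy--Schwarz argument produces $\int W\phi^2 \le \int(\cdots)^2/W$ with the \emph{same} weight $W$ on both sides. Tracing through $g(r) = 1/(4r^2 W(r))$ shows $W = 1/(4gr^2)$ and $1/W = 4gr^2$, so the left side becomes $\int \phi^2/(4gr^2)$ and the right side $\int 4gr^2(-\Delta\phi-\lambda_0\phi)^2$; matching this to the stated $\int \phi^2/(16gr^2) \le \int gr^2(\cdots)^2$ amounts to checking that the displayed inequality is implied (after multiplying through by a harmless constant) — indeed $\int \phi^2/(4gr^2) \le \int 4gr^2(\cdots)^2$ implies nothing weaker than $\int\phi^2/(16gr^2)\le\int gr^2(\cdots)^2$ only if one is careful, so the cleanest route is to keep $W$ throughout and only substitute $g$ at the very end, confirming the factor via \eqref{eqref:g}. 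All analytic ingredients — Green's formula, Cauchy--Schwarz, positivity of $W$ under the hypotheses on $(p,q)$ — are elementary and already available, so no further obstacle arises.
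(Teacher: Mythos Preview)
Your argument is correct and matches the paper's proof exactly: Poincar\'e--Hardy inequality (Theorem~\ref{thm:PH}), Green's formula to rewrite the energy as $\int \phi\,H\phi$, then Cauchy--Schwarz with the splitting $\sqrt{W}\,\phi \cdot (H\phi)/\sqrt{W}$. The constant bookkeeping you worry about is harmless: from $\int W\phi^2 \le \int (H\phi)^2/W$ and $1/W = 4gr^2$ you get $\int \phi^2/(4gr^2) \le \int 4gr^2 (H\phi)^2$, and dividing both sides by $4$ gives the stated inequality.
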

\begin{proof}
  Let us denote as before,
    $$W(r):=\frac{1}{4r^2}   + \frac{p(p+2q-2)}{16}\frac{1}{\sinh^2(r/2)} + \frac{q(q-2)}{4}\frac{1}{\sinh^2 (r)}. $$
    Moreover, we set $H:= -\Delta - \lambda_0(X^{p,q})$. Then, again by the Poincar{\'e}-Hardy-type inequality (Theorem~\ref{thm:PH}), the Green's formula and the Cauchy-Schwarz inequality, we obtain
\begin{multline*}
 \int_{X^{p,q}}W(r) \phi^2 dx
 \leq \int_{X^{p,q}} \phi H \phi dx \\
 \leq \left( \int_{X^{p,q}} W(r) \phi^2 dx\right)^{1/2}\left(  \int_{X^{p,q}} W^{-1}(r) (H\phi)^2 dx\right)^{1/2}.
\end{multline*}
Since $1/W(r) = 4 g(r)r^2 $ we obtain the desired inequality.
\end{proof}

\section{Variations of the main result}\label{sec:VariMain}
The aim of this section is to show the effect of  perturbations of some parameters. In the first subsection, we will investigate the consequence of changing the function $\Phi$ from $(r/f(r))^{1/2}$ to $(r/f(r))^{1/2}\cdot (r/f^{1/(n-1)}(r))^\gamma$ for some number $\gamma >0$. This will then result in a new family of critical Schrödinger operators, where the effect of $\gamma$ is only visible at the corresponding constants.

In the second subsection, we have a closer look at the weighted version of the Hardy inequality, i.e., we consider the following family of quadratic forms
\[ \int_{X^{p,q}} \frac{\vert \nabla \phi\vert^2}{r^{2\alpha}}dx, \qquad \alpha \geq 0, \phi\in C_c^\infty(X^{p,q}),\]
and show the effect of $\alpha > 0$. In a different perspective, this new weighted energy form is a specific ground state transform of the unweighted energy form.

Finally, in the third subsection, we vary the operator and consider quasi-linear $P$-Laplacians, $P\geq 2$. Here the classical linear case corresponds to $P=2$.

\subsection{A family of Hardy-type inequalities}

In the previous section, we investigated a Hardy-type theorem for the operator $-\Delta - \frac{1}{4r^2}$ on harmonic manifolds. Now, we will use this result to obtain similar inequalities for the operators $-\Delta -\frac{a}{4r^2}$ for $0 \le a \le 1$. The main result of this section reads as follows and is inspired by the corresponding result on hyperbolic spaces, see \cite[Theorem~2.1]{BGGP20}:

\begin{theorem}[Family of Hardy-type inequalities on harmonic manifolds] \label{thm:PHgamma}
Let $(X,g)$ be a non-compact harmonic manifold with volume density $f$, $o \in X$ a pole, $r = d(o,\cdot)$, and dimension $n\geq 3$. Let $h: X \setminus \{o\} \to (0,\infty)$ be a smooth radial function satisfying $h(r) \ge C_1 r$ for all $r \ge 0$ and $h(r) \sim C_2 r$ as $r \to 0$ for some positive constants $C_1, C_2 > 0$. Then, for all $\phi \in C_c^{\infty}(X)$ and all $\gamma \in [0,1/2]$,
\begin{multline} \label{eq:famhardy}
\int_{X} \vert \nabla \phi \vert^2 dx
\geq \frac{ 1-(2\gamma)^2}{4} \int_{X} \frac{\phi^2}{r^2} dx \\  
+ \int_{X}\left( 
\left(\frac{1}{2} \frac{f''}{f} - \frac{1}{4} \left( \frac{f'}{f} \right)^2 \right) + \gamma \left( \frac{h''}{h} + (1+2\gamma)\frac{h'}{rh}  - (1+\gamma)\left(\frac{h'}{h}\right)^2 \right)
\right) \phi^2 dx.
\end{multline} 
Moreover, the Schr\"{o}dinger operator $-\Delta+V-W$ with
$$
    V(r):= - \left(\frac{1}{2} \frac{f''}{f} - \frac{1}{4} \left( \frac{f'}{f} \right)^2 \right) + \gamma \left( \frac{h''}{h} - (1+2\gamma)\frac{h'}{rh}  - (1+\gamma)\left(\frac{h'}{h}\right)^2 \right)
$$
and
$$ W(r):= \frac{ 1-(2\gamma)^2}{4r^2} $$
is critical in $X\setminus \{o\}$ with ground state $r^{1/2+\gamma}f^{-1/2}h^{-\gamma}$.  Furthermore, if we choose as weight $W(r)=\frac{1-(2\gamma )^2}{4 r^2}$, $0 \le \gamma < 1/2$, then $W$ is optimal in $X\setminus \{o\}$ with respect to $-\Delta+V$. 
\end{theorem}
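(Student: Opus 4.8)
## Proof proposal for Theorem~\ref{thm:PHgamma}

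The plan is to follow the same three-part strategy that was used for Theorem~\ref{thm:HardyHarmonic}: produce an explicit positive solution $\Phi$, verify criticality via a second solution and the Khas'minski\u{i} criterion (Proposition~\ref{prop:khasminski}), establish null-criticality by an integrability computation, and then derive the quadratic-form inequality from Lemma~\ref{lem:davies}. The candidate ground state is dictated by the statement: $\Phi(r) = r^{1/2+\gamma} f^{-1/2}(r) h^{-\gamma}(r)$. First I would compute $-\Delta\Phi$. Rather than redo this from scratch, I would write $\Phi = (r^{1/2}f^{-1/2}) \cdot (r/h)^{\gamma} \cdot$ (constant powers absorbed), i.e. factor $\Phi = \Psi \cdot k^\gamma$ where $\Psi = r^{1/2}f^{-1/2}$ is the ground state from Theorem~\ref{thm:HardyHarmonic} and $k(r) = r/h(r)$. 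Using the product rule $\Delta(\Psi k^\gamma) = k^\gamma \Delta\Psi + \Psi\,\Delta(k^\gamma) + 2\langle\nabla\Psi,\nabla k^\gamma\rangle$ together with the radial Laplacian formula \eqref{eq:Lapharmsp} and the known identity $-\Delta\Psi = \left(\frac{(f')^2-2ff''}{4f^2} - \frac{1}{4r^2}\right)\Psi$ should, after simplification, reproduce exactly $V - W$ as written in the theorem, with the $\gamma$-dependent terms $\gamma\big(\frac{h''}{h} - (1+2\gamma)\frac{h'}{rh} - (1+\gamma)(h'/h)^2\big)$ emerging from $\Delta(k^\gamma)/k^\gamma$ and the cross term. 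This is the routine-but-bookkeeping-heavy step; a cleaner alternative is a direct appeal to Lemma~\ref{lem:alphabeta} with a three-factor generalization, but factoring through $\Psi$ keeps the algebra organized.

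Next, for criticality in $X\setminus\{o\}$, I need a second positive solution of $H\Phi=0$ with a definite growth comparison so that Proposition~\ref{prop:khasminski} applies at both infinity and at $o$. In the $\gamma=0$ case one used $\Psi\ln r$. Here I expect the analogue of Lemma~\ref{lem:Phig} to show that $\Phi(r)\cdot\ln(r/h(r))$, or more precisely $\Phi$ times a suitable antiderivative arising from reduction of order, is a second solution; concretely, if $\Phi$ solves $H\Phi=0$ then $\Phi(r)\int^r \frac{f(s)h^{2\gamma}(s)}{s^{1+2\gamma}}\,ds$ is a second solution (the standard Wronskian/reduction-of-order construction for the radial ODE $u'' + \frac{f'}{f}u' + (W-V)u = 0$). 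Using the hypotheses $h(r)\ge C_1 r$ and $h(r)\sim C_2 r$ as $r\to 0$, I would show this second solution dominates $\Phi$ at infinity and that $\Phi$ has minimal growth at $o$ (running the same argument with a sign flip, as in the proof of Theorem~\ref{thm:HardyHarmonic}), hence $\Phi$ is a global minimal solution and therefore the Agmon ground state, giving criticality.

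For null-criticality with $W(r)=\frac{1-(2\gamma)^2}{4r^2}$ when $0\le\gamma<1/2$, I compute
$$ \|\Phi\|^2_{L^2(X\setminus\{o\},\,W\,dx)} = \omega_n\,\frac{1-(2\gamma)^2}{4}\int_0^\infty \frac{r^{1+2\gamma}}{f(r)h^{2\gamma}(r)}\cdot f(r)\cdot\frac{dr}{r^2}\,dr = \omega_n\,\frac{1-(2\gamma)^2}{4}\int_0^\infty \frac{r^{2\gamma-1}}{h^{2\gamma}(r)}\,dr, $$
and the lower bound $h(r)\ge C_1 r$ gives integrand $\ge (C_1)^{-2\gamma} r^{-1}$, so the integral diverges at infinity; divergence at $0$ follows from $h\sim C_2 r$ there. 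Hence $\Phi\notin L^2(X\setminus\{o\},W\,dx)$, null-criticality holds, and by Kova\v{r}\'ik--Pinchover optimality at infinity comes for free, so $W$ is an optimal Hardy weight of $-\Delta+V$. Finally, the inequality \eqref{eq:famhardy} on $C_c^\infty(X\setminus\{o\})$ is immediate from Lemma~\ref{lem:davies} applied to the positive solution $\Phi$ of $H\Phi=0$, and the extension from $C_c^\infty(X\setminus\{o\})$ to $C_c^\infty(X)$ uses the vanishing-capacity argument for $\{o\}$ exactly as in the proof of Theorem~\ref{thm:HardyHarmonic} (here $n\ge 3$ and the lower bound \eqref{eq:meancurvDR}-type control ensure the point has zero capacity). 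The main obstacle I anticipate is purely computational: confirming that $-\Delta\Phi/\Phi$ collapses precisely to the stated $V-W$, in particular tracking the coefficients $(1+2\gamma)$ and $(1+\gamma)$ correctly through the cross term $2\langle\nabla\Psi,\nabla k^\gamma\rangle/\Phi$; a secondary subtlety is checking that the reduction-of-order second solution genuinely has the growth needed for Khas'minski\u{i} at $o$ under only the asymptotic hypothesis $h(r)\sim C_2 r$ rather than an exact identity.
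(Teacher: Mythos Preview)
Your strategy is viable and would yield a correct proof, but it differs from the paper's route in one essential respect. For criticality of $H_1 = -\Delta + V - W$, you propose to rerun the Khas'minski\u{i} argument from scratch: build a second solution by reduction of order and compare growth at $o$ and at infinity. The paper instead applies the \emph{Liouville comparison principle} (Proposition~\ref{prop:Liouville}): it already knows from Theorem~\ref{thm:HardyHarmonic} that $H_0 = -\Delta + V_0$ (the $\gamma=0$ operator) is critical with ground state $\Phi_0 = (r/f)^{1/2}$; since $\Psi = \Phi_0 \cdot (r/h)^{\gamma}$ is a positive solution of $H_1\Psi = 0$ and the hypothesis $h(r) \ge C_1 r$ gives $(\Psi/\Phi_0)^2 = (r/h)^{2\gamma} \le C_1^{-2\gamma}$, the comparison principle immediately forces $H_1$ to be critical with ground state $\Psi$. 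This is shorter and avoids constructing and analysing a second solution; your approach is more self-contained and does not require invoking Proposition~\ref{prop:Liouville}, at the cost of additional ODE work. The computation of $-\Delta\Psi$ via Lemma~\ref{lem:Phig} with $(\alpha,\beta)=(1/2+\gamma,-1/2)$ and the capacity extension to $C_c^\infty(X)$ are essentially the same in both.

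Two small corrections in your outline. First, your reduction-of-order formula has a stray $f(s)$: since $\Phi^2 f = r^{1+2\gamma} h^{-2\gamma}$, the second solution is $\Phi(r)\int^r s^{-1-2\gamma} h^{2\gamma}(s)\,ds$, without an $f(s)$ in the numerator; with this correction the growth comparisons you describe go through (near $0$ use $h\sim C_2 r$ so the integrand is $\sim C_2^{2\gamma}/s$, and at infinity use $h\ge C_1 r$ so the integrand is $\ge C_1^{2\gamma}/s$). Second, in the null-criticality step you write ``the lower bound $h(r)\ge C_1 r$ gives integrand $\ge (C_1)^{-2\gamma} r^{-1}$'', but the inequality points the wrong way: $h\ge C_1 r$ yields $r^{2\gamma-1}/h^{2\gamma} \le C_1^{-2\gamma} r^{-1}$, which is an upper bound and says nothing about divergence at infinity (indeed, for $h=f^{1/(n-1)}$ growing exponentially the integral converges there). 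This does not matter for the conclusion, since your divergence argument at $r\to 0$ via $h\sim C_2 r$ is correct and already gives $\Psi\notin L^2(X\setminus\{o\},W\,dx)$.
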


Note that Theorem~\ref{thm:PH} can be recovered from Theorem~\ref{thm:PHgamma} if we set $\gamma = 0$.

We derive this result with the help of the Liouville comparison theorem. For the readers' convenience, we state it here (see also \cite[Theorem~6.3]{BGGP20}).

\begin{proposition}[Liouville comparison principle, Theorem~1.7 in \cite{P07}]\label{prop:Liouville}
Let $\Omega$ be a domain in a non-compact $n$-dimensional Riemannian manifold. Consider two Schr\"{o}dinger operators on $\Omega$ defined via $$H_i = -\Delta + V_i,\qquad i=0,1, $$
with smooth potentials. Furthermore, assume that
\begin{enumerate}
    \item $H_0$ is critical in $\Omega$ with ground state $\Phi$;
    \item $H_1$ is non-negative in $\Omega$ and 
    there is a function $\Psi \in C^\infty(\Omega)$ with $H_1 \Psi \le 0$ with $\Psi_+ \neq 0$, where $\Psi_+(x) = \max\{ \Psi(x),0 \}$.
    \item There exists a constant $C>0$ such that $(\Psi_+)^2\leq C(\Phi)^2$.
\end{enumerate}
Then the operator $H_1$ is critical with ground state $\Psi$.
\end{proposition}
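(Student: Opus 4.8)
The plan is to reduce the proposition to the \emph{null-sequence characterization of criticality}: a nonnegative operator $H=-\Delta+V$ on $\Omega$ is critical with ground state $W>0$ if and only if it admits a \emph{null sequence}, i.e.\ a sequence $(u_k)\subset C_c^\infty(\Omega)$, normalised by $\int_B u_k^2\,dx=1$ on some fixed ball $B\subset\Omega$, with $E(u_k)\to 0$ (here $E(\phi):=\int_\Omega(|\nabla\phi|^2+V\phi^2)\,dx$) and $u_k\to W$ locally (see \cite{PT07,PT08}). Granting this, the whole comparison is carried by the single inequality $(\Psi_+)^2\le C\Phi^2$ of hypothesis~(3): I would transport a null sequence of $H_0$ to a null sequence of $H_1$, the fixed constant $C$ being harmless since the condition ``energy $\to 0$'' is insensitive to a constant multiplicative factor. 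Throughout I write $E_i(\phi):=\int_\Omega(|\nabla\phi|^2+V_i\phi^2)\,dx$ for $i=0,1$.

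First, since $H_0$ is critical with ground state $\Phi$, I fix a null sequence $(\phi_k)$ with $\phi_k\ge 0$, $E_0(\phi_k)\to 0$ and $\phi_k\to\Phi$ locally. Setting $\eta_k:=\phi_k/\Phi\in C_c^\infty(\Omega)$ (admissible since $\Phi>0$ is smooth), the exact ground state transform for $H_0$—the equality underlying Lemma~\ref{lem:davies}, obtained from $|\nabla(\Phi\eta)|^2=\Phi^2|\nabla\eta|^2+\nabla\Phi\cdot\nabla(\Phi\eta^2)$ together with $H_0\Phi=0$—gives
\[ E_0(\phi_k)=\int_\Omega\Phi^2\,|\nabla\eta_k|^2\,dx\longrightarrow 0,\qquad \eta_k\to 1 \text{ locally}. \]
Next I convert $\Psi$ into a usable subsolution for $H_1$. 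From $H_1\Psi\le 0$ and Kato's inequality, $\Psi_+$ is a nonnegative (Lipschitz) subsolution, $H_1\Psi_+\le 0$ weakly. The same algebraic expansion applied to $v=\Psi_+$, followed by pairing the subsolution inequality against the nonnegative compactly supported test function $\Psi_+\eta^2$, now yields the transform \emph{inequality}
\[ E_1(\Psi_+\eta)=\int_\Omega\Psi_+^2\,|\nabla\eta|^2\,dx+\langle H_1\Psi_+,\Psi_+\eta^2\rangle\le\int_\Omega\Psi_+^2\,|\nabla\eta|^2\,dx \]
for every $\eta\in C_c^\infty(\Omega)$, since the pairing is $\le 0$.

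Applying this with $\eta=\eta_k$ and invoking hypothesis~(3),
\[ E_1(\Psi_+\eta_k)\le\int_\Omega\Psi_+^2\,|\nabla\eta_k|^2\,dx\le C\int_\Omega\Phi^2\,|\nabla\eta_k|^2\,dx=C\,E_0(\phi_k)\longrightarrow 0. \]
Since $\Psi_+\eta_k\to\Psi_+\not\equiv 0$ locally, normalising on a ball where $\Psi_+>0$ produces a genuine null sequence for $H_1$. By the characterization and the non-negativity of $H_1$ from hypothesis~(2), $H_1$ is critical with ground state $\Psi_+$; as ground states are everywhere positive, this forces $\Psi=\Psi_+>0$, which is precisely the assertion.

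The main obstacle is not the above algebra but the technical scaffolding it rests on. Three points need genuine care: the precise statement and proof of the null-sequence characterization of criticality (the substantive input, drawn from criticality theory as in \cite{PT07,PT08}, including that any null sequence converges locally to the ground state); the justification, via Kato's inequality, that $\Psi_+$ is a weak subsolution and that the merely Lipschitz, compactly supported products $\Psi_+\eta$ and $\Psi_+\eta^2$ are admissible so that the transform inequality and the weak pairing are meaningful; and finally ensuring that the convergence $\phi_k\to\Phi$, hence $\eta_k\to 1$, holds in a mode strong enough that $\Psi_+\eta_k\to\Psi_+$ with nonzero $L^2(B)$-limit on a suitable ball, so that the normalisation step is legitimate.
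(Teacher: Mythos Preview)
The paper does not supply a proof of this proposition: it is quoted verbatim as Theorem~1.7 of \cite{P07} and used as a black box in the proof of Theorem~\ref{thm:PHgamma}. So there is no ``paper's own proof'' to compare against here.

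That said, your outline is essentially the argument Pinchover gives in \cite{P07}. The mechanism---transport a null sequence for $H_0$ through the ground state transform, use Kato's inequality to pass from $\Psi$ to $\Psi_+$, and then invoke the pointwise bound $(\Psi_+)^2\le C\Phi^2$ to dominate $\int\Psi_+^2|\nabla\eta_k|^2$ by $C\,E_0(\phi_k)$---is exactly the intended one, and your identification of the three technical points (the null-sequence characterisation of criticality, admissibility of the merely Lipschitz test functions $\Psi_+\eta$, and the mode of convergence needed for the normalisation) is accurate. One small sharpening: rather than appealing to Kato's inequality, Pinchover works on the open set $\{\Psi>0\}$ where $\Psi_+=\Psi$ is smooth and a genuine subsolution; the cut-offs $\eta_k$ can be arranged to vanish near $\partial\{\Psi>0\}$ since $\Psi_+$ vanishes there anyway, which sidesteps the regularity issue entirely. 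With that adjustment your sketch is a faithful reconstruction of the cited proof.
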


\begin{proof}[Proof of Theorem~\ref{thm:PHgamma}] Let $h\colon X\setminus \{ o \}\to (0,\infty)$ be a smooth radial function satisfying $h(r) \ge C r$ for some positive constant $C > 0$. Applying Lemma~\ref{lem:Phig} with $\tilde{\Phi}=r^\alpha f^\beta$ and  $(\alpha, \beta)= (1/2+\gamma, -1/2)$ yields
\begin{multline*}
      -\Delta (\tilde{\Phi} h^{-\gamma})(r) 
      +  \left(\frac{\gamma^2-1/4}{r^2} 
      +\frac{1}{4}\left(\frac{f'(r)}{f(r)}\right)^2 -\frac{1}{2}\frac{f''(r)}{f(r)} \right) \tilde{\Phi}(r) h^{-\gamma}(r) \\
      = -  \left(\gamma (\gamma +1)  \left(\frac{h'(r)}{h(r)}\right)^2- \gamma \frac{h''(r)}{h(r)} -\gamma (1+2\gamma)\frac{h'(r)}{r h(r)}\right)\tilde{\Phi} (r)h^{-\gamma} (r).
\end{multline*}
Now we apply the Liouville comparison principle with $\Omega = X \setminus \{o\}$,
    $$V_0= -\frac{1}{4r^2} 
      +\frac{1}{4}\left(\frac{f'(r)}{f(r)}\right)^2 -\frac{1}{2}\frac{f''(r)}{f(r)}$$
and, for $\gamma \in [0,1/2]$,
\begin{multline*}
        V_1= \frac{\gamma^2-1/4}{r^2} 
      +\frac{1}{4}\left(\frac{f'(r)}{f(r)}\right)^2 -\frac{1}{2}\frac{f''(r)}{f(r)} \\
      +\gamma (\gamma +1)  \left(\frac{h'(r)}{h(r)}\right)^2-\gamma \frac{h''(r)}{h(r)} -\gamma (1+2\gamma)\frac{h'(r)}{r h(r)}.
\end{multline*}
Let $H_i=-\Delta +V_i$ for $i=0,1$.

 We have shown in Theorem~\ref{thm:HardyHarmonic} that $H_0$ is critical with ground state $\Phi= (r/f)^{1/2}$. Set $\Psi = (r/f)^{1/2}\cdot (r/h)^\gamma >0$ on $\Omega$, that is $\Psi_+ = \Psi$. By construction, we have $H_1\Phi = 0$, and non-negativity of $H_1$ on $\Omega$ follows from Lemma \ref{lem:davies}.

 Since $\Psi / \Phi = (r/h)^{\gamma} \leq 1/C_1$, the Liouville comparison principle implies that $H_1$ is critical with ground state $\Psi$. We can then use the same approximation argument as in the proof of Theorem~\ref{thm:PH} to obtain inequality \eqref{eq:famhardy} for all $\phi\in C_c^\infty(X^{p,q})$ and not only $\phi\in C_c^\infty(\Omega)$.

 Let now $W(r)=(1-(2\gamma)^2)/(2r)^2$ and $0 < \gamma <1/2$. Since $h(r)\sim C_2 r^{n-1}$ as $r\to 0$, we have $\Psi \not\in L^2(\Omega, W dx)$, and $H_1$ is therefore null-critical with respect to $W$. This implies that $W$ is an optimal Hardy weight of $H_1$ in $X\setminus \{o\}$.
\end{proof}

The special choice $h(r) = f^{1/(n-1)}(r)$ leads to the following result which, in the case of Damek-Ricci spaces $X^{p,q}$, can be viewed as Hardy-type improvements of the operators $-\Delta - \lambda$ for $\lambda \le \lambda_0(X^{p,q})$.

\begin{corollary} \label{cor:PHgamma}
    Let $(X,g)$ be a non-compact harmonic manifold with volume density $f$, $o \in X$ a pole, $r = d(o,\cdot)$, and dimension $n\geq 3$. Then, for all $\phi \in C_c^{\infty}(X)$ and all $\gamma \in [0,1/2]$,
\begin{multline} \label{eq:famhardyspec}
\int_{X} \vert \nabla \phi \vert^2 dx
\geq \frac{ 1-(2\gamma)^2}{4} \int_{X} \frac{\phi^2}{r^2} dx + \frac{\gamma(1+2\gamma)}{n-1} \int_X \frac{f'}{f}\cdot \frac{\phi^2}{r} dx +\\ \left( \frac{1}{2} + \frac{\gamma}{n-1} \right) \int_X \left( \frac{f''}{f} - \left( \frac{1}{2} + \frac{\gamma}{n-1} \right) \left( \frac{f'}{f} \right)^2\right) \phi^2 dx.
\end{multline} 
In the particular case of a Damek-Ricci space $X^{p,q}$, we have 
\begin{multline} \label{eq:DRfamHI}
\int_{X^{p,q}} \vert \nabla \phi \vert^2 dx
- \left( 1-\frac{(2\gamma)^2}{(p+q)^2} \right)\lambda_0(X^{p,q}) \int_{X^{p,q}} \phi^2 dx \\
\geq  \frac{ 1-(2\gamma)^2}{4} \int_{X^{p,q}} \frac{\phi^2}{r^2} dx +\frac{\gamma (1+2\gamma)}{p+q}\int_{X^{p,q}}\left( \coth(r/2)-\frac{1}{\sinh(r)}\right)\frac{\phi^2}{r} dx \\  
+ q\left(\frac{1}{2}+\frac{\gamma}{p+q} + q\left(\frac{\gamma^2}{(p+q)^2}-\frac{1}{4} \right) \right)\int_{X^{p,q}}\frac{ \phi^2}{\sinh^2(r)} dx \\
+p\left(\frac{1}{2}+\frac{\gamma}{p+q} + (p+2q)\left(\frac{\gamma^2}{(p+q)^2}-\frac{1}{4} \right)\right)\int_{X^{p,q}} \frac{ \phi^2}{4\sinh^2(r/2)} dx.
\end{multline} 
\end{corollary}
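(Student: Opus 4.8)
The plan is to derive Corollary~\ref{cor:PHgamma} directly from Theorem~\ref{thm:PHgamma} by making the specific substitution $h(r) = f^{1/(n-1)}(r)$ and then, in the Damek-Ricci case, inserting the explicit volume density \eqref{eq:voldens}. First I would check that $h = f^{1/(n-1)}$ is an admissible choice in Theorem~\ref{thm:PHgamma}: since $f(r) \sim \mathrm{const}\cdot r^{n-1}$ as $r\to 0$ (the volume density of any Riemannian manifold is asymptotic to the Euclidean one at the pole), we get $h(r)\sim C_2 r$ as $r\to 0$; and since $\frac{f'}{f}\ge \frac{n-1}{r}$ for harmonic manifolds (this is the analogue of \eqref{eq:meancurvDR}, following from the fact that the mean curvature of geodesic spheres in a harmonic manifold dominates that in Euclidean space), integrating gives $h(r) = f^{1/(n-1)}(r)\ge C_1 r$ for all $r\ge 0$. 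So the hypotheses are met.

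Next I would compute the three logarithmic derivatives of $h$ in terms of those of $f$. Writing $h = f^{1/(n-1)}$ we have $\frac{h'}{h} = \frac{1}{n-1}\frac{f'}{f}$ and $\frac{h''}{h} = \frac{1}{n-1}\frac{f''}{f} + \frac{1}{n-1}\left(\frac{1}{n-1}-1\right)\left(\frac{f'}{f}\right)^2$. Substituting these into the bracketed weight on the right-hand side of \eqref{eq:famhardy}, the coefficient of $\gamma$ becomes
$$\frac{1}{n-1}\frac{f''}{f} + \frac{1}{n-1}\left(\frac{2-n}{n-1}\right)\left(\frac{f'}{f}\right)^2 + \frac{1+2\gamma}{n-1}\cdot\frac{f'}{rf} - \frac{(1+\gamma)}{(n-1)^2}\left(\frac{f'}{f}\right)^2.$$
Collecting the $\left(\frac{f'}{f}\right)^2$-terms and combining with the $-\frac14\left(\frac{f'}{f}\right)^2 + \frac12\frac{f''}{f}$ already present, a short bookkeeping calculation should reorganize everything into the form displayed in \eqref{eq:famhardyspec}, with the coefficient $\left(\frac12 + \frac{\gamma}{n-1}\right)$ appearing as the overall factor in front of $\frac{f''}{f}$ and its square inside. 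The criticality statement and the ground state $r^{1/2+\gamma}f^{-1/2}h^{-\gamma} = r^{1/2+\gamma}f^{-1/2-\gamma/(n-1)}$ carry over verbatim from Theorem~\ref{thm:PHgamma}.

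Finally, for the Damek-Ricci specialization \eqref{eq:DRfamHI}, I would substitute $n-1 = p+q$ together with \eqref{eq:fprimef}, namely $\frac{f'}{f} = \frac{p+2q}{2}\coth(r/2) - \frac{q}{\sinh r}$, and use Lemma~\ref{lem:Hilfe} with the parameter pair $(a,b) = \left(\left(\frac12+\frac{\gamma}{p+q}\right)^2, \frac12+\frac{\gamma}{p+q}\right)$ to rewrite $a\left(\frac{f'}{f}\right)^2 - b\frac{f''}{f}$ in terms of $\frac{1}{\sinh^2(r/2)}$, $\frac{1}{\sinh^2 r}$ and the constant $4(a-b)\lambda_0$. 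The constant term $4(a-b)\lambda_0$ produces exactly the Poincaré coefficient $\left(1 - \frac{(2\gamma)^2}{(p+q)^2}\right)\lambda_0(X^{p,q})$ on the left-hand side once one checks $a - b = \frac{\gamma^2}{(p+q)^2} - \frac14$; the $\frac{f'}{f\,r}$-term contributes the $\coth(r/2) - \frac{1}{\sinh r}$ integral with coefficient $\frac{\gamma(1+2\gamma)}{p+q}$; and the remaining two terms give the stated $\frac{\phi^2}{\sinh^2 r}$ and $\frac{\phi^2}{4\sinh^2(r/2)}$ contributions. The only mild obstacle I anticipate is the algebraic bookkeeping in matching all coefficients exactly — in particular tracking how the single factor $\left(\frac12 + \frac{\gamma}{p+q}\right)$ distributes over the $\sinh^{-2}$ terms via Lemma~\ref{lem:Hilfe} — but this is routine once the substitution $(a,b)$ above is identified, and no genuinely new idea is required beyond Theorem~\ref{thm:PHgamma} and Lemma~\ref{lem:Hilfe}.
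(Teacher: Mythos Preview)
Your proposal follows essentially the same route as the paper: specialize Theorem~\ref{thm:PHgamma} to $h = f^{1/(n-1)}$, compute $h'/h$ and $h''/h$ in terms of $f'/f$ and $f''/f$, substitute into \eqref{eq:famhardy} to obtain \eqref{eq:famhardyspec}, and then in the Damek--Ricci case invoke Lemma~\ref{lem:Hilfe} with exactly the pair $(a,b) = \bigl((\tfrac12+\tfrac{\gamma}{p+q})^2,\ \tfrac12+\tfrac{\gamma}{p+q}\bigr)$, together with \eqref{eq:fprimef} for the $f'/(rf)$ term. This is precisely what the paper does.

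There is, however, one point that needs more care: your verification of the hypothesis $h(r)\ge C_1 r$ rests on the claim $f'/f \ge (n-1)/r$ for \emph{general} non-compact harmonic manifolds. The paper establishes this inequality only for Damek--Ricci spaces (equation~\eqref{eq:meancurvDR}); for arbitrary non-compact harmonic manifolds it is not immediate, since these are not known in general to have non-positive sectional curvature, so a Rauch/G\"unther comparison is not available. The paper's proof instead treats the flat case separately (where $f(r)=r^{n-1}$ and $h(r)=r$), and in the non-flat case invokes Knieper's exponential lower bound $\liminf_{r\to\infty} f(r)/e^{hr} > 0$ from \cite[Cor.~2.6]{Kn12} to deduce $\liminf_{r\to\infty} h(r)/r > 0$; combined with $h(r)\sim r$ as $r\to 0$ and continuity, this yields $C_1 = \inf_{r>0} h(r)/r > 0$. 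You should replace your mean-curvature comparison by this argument; the rest of your plan goes through unchanged.
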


\begin{proof}[Proof of Corollary~\ref{cor:PHgamma}] Let $h = f^{1/(n-1)}$. Let us first verify that $h$ satisfies the conditions required in Theorem \ref{thm:PHgamma}. 
Let $(X,g)$ be a non-compact harmonic manifold of dimension $n \ge 3$ and $H \ge 0$ the (constant) mean curvature of its horospheres. We know from a combination of \cite{RS02} and \cite{Ni05} that non-compact harmonic manifolds with $H=0$ are flat (see also \cite[Section 6]{KP13}). In this case we have  $f(r) = r^{n-1}$ and, therefore $h(r)= r$ and the conditions of the theorem are satisfied with $C_1=C_2=1$. If $H>0$, we have (see \cite[Cor. 2.6]{Kn12})
$\lim_{r \to \infty} \frac{f(r)}{e^{hr}} \ge \left( \frac{n-1}{2H} \right)^{n-1} >0, $
and therefore 
$$ \lim_{r \to \infty} \frac{h(r)}{r} > 0. $$
On the other hand, we have for any $n$-dimensional Riemannian manifold that $\lim_{r \to 0} \frac{f(r)}{r^{n-1}} = 1$, that is, $h(r) \sim r$ as $r \to 0$. Moreover, this implies that
$$ C_1 = \inf_{r > 0} \frac{h(r)}{r} > 0. $$
It is easy to see that
$$ h'= \frac{h}{n-1}\cdot \frac{f'}{f} \quad \text{and} \quad 
        h''= \frac{h}{n-1} \left( \left(\frac{2-n}{n-1}\right)\cdot \left(\frac{f'}{f}\right)^2 + \frac{f''}{f} \right).
$$
Plugging this into \eqref{eq:famhardy} yields \eqref{eq:famhardyspec}. 

Now we derive the inequality for a Damek-Ricci space with $n-1 = p+q$. Using Lemma~\ref{lem:Hilfe} with $a=\left(\frac{1}{2}+ \frac{\gamma}{(p+q)}\right)^2$ and $b=\frac{1}{2}+\frac{\gamma}{p+q}$, we obtain
    \begin{multline*}
      \left(\frac{1}{2}+ \frac{\gamma}{(p+q)} \right)^2\left(\frac{f'(r)}{f(r)}\right)^2- \left(\frac{1}{2}+\frac{\gamma}{p+q}\right)\frac{f''(r)}{f(r)}\\
      = \left( \frac{(2\gamma)^2}{(p+q)^2}-1 \right)\lambda_0(X^{p,q}) 
      + \left(q\left(\frac{\gamma^2}{(p+q)^2}-\frac{1}{4} \right)+\frac{1}{2}+\frac{\gamma}{p+q} \right) \frac{q}{\sinh^2(r)}\\
      + \left( (p+2q)\left(\frac{\gamma^2}{(p+q)^2}-\frac{1}{4} \right)+\frac{1}{2}+\frac{\gamma}{p+q} \right) \frac{p}{4\sinh^2(r/2)}.
    \end{multline*}
Plugging this into \eqref{eq:famhardyspec} yields \eqref{eq:DRfamHI}, finishing the proof of the corollary.
\end{proof}

We want to mention that also other choices of $h$ lead to closely related inequalities. 
In \cite{BGGP20} the choice of $h$ is $h(r) =  \sinh(r)$ which is a natural choice for hyperbolic spaces, and our aim was to generalize that result. 

\subsection{Weighted Hardy-type inequalities}\label{sec:WHR}
Here, we use the method in \cite[Theorem~5.1]{BGGP20}, which is concerned with the hyperbolic space, to obtain a similar result for arbitrary harmonic manifolds $X$. Note again that we obtain the main results in the Introduction by choosing $\alpha=0$ in the following theorem.

\begin{theorem}[Weighted Hardy-type inequality]\label{thm:weightedHardy}
    Let $(X,g)$ be a non-compact harmonic manifold with volume density $f=f(r)$, $o \in X$ a pole, $r = d(o,\cdot)$, and dimension $n\geq 3$. 
    Assume that $n\geq 2(1+\alpha)$ for some $\alpha \geq 0$, then the following  weighted Hardy-type inequality holds true for all $\phi \in C_c^{\infty}(X)$,
    $$\int_X \frac{\vert \nabla \phi \vert^2}{r^{2\alpha}} dx
     \geq \int_X\left(\frac{ 2 f(r)f''(r)-(f'(r))^2}{4 f(r)^2}- \alpha \frac{f'(r)}{rf(r)}+ \frac{4\alpha +1}{4 r^2}\right) \frac{\phi^2}{r^{2\alpha}} dx.$$
     Especially, for such Damek-Ricci spaces $X^{p,q}$, we have for all $\phi \in C_c^{\infty}(X)$ and $p+q-1 \geq 2\alpha$,
     \begin{multline*}
         \int_X \frac{\vert \nabla \phi \vert^2-\lambda_0(X^{p,q}) \phi^2}{r^{2\alpha}} dx
     \geq \int_X\left(\frac{p(p+2q-2)}{16\sinh^2(r/2)} + \frac{q(q-2)}{4\sinh^2 (r)} \right)\frac{\phi^2}{r^{2\alpha}}dx\\
     +  \int_X\left( \frac{\alpha}{r}\left( \frac{q}{\sinh(r)}-\frac{p+2q}{2}\coth (r/2) \right)+ \frac{4\alpha +1}{4 r^2}\right) \frac{\phi^2}{r^{2\alpha}}dx.
     \end{multline*}
\end{theorem}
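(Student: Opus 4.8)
The plan is to run the ground-state substitution behind Theorem~\ref{thm:HardyHarmonic}, but ``twisted'' by the weight $r^{-2\alpha}$ — this is the ground-state transform of the unweighted energy alluded to above. Write $\int_X r^{-2\alpha}|\nabla\phi|^2\,dx=\int_X|\nabla\phi|^2\,d\mu$ with $d\mu=r^{-2\alpha}\,dx$, and let $\Delta_\mu$ be the associated weighted Laplacian, $\Delta_\mu\phi=r^{2\alpha}\,\mathrm{div}(r^{-2\alpha}\nabla\phi)=\Delta\phi-\tfrac{2\alpha}{r}\langle\nabla r,\nabla\phi\rangle$, so that on radial functions $\Delta_\mu h=h''+(\tfrac{f'}{f}-\tfrac{2\alpha}{r})h'$. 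I apply this to the Agmon ground state $\Phi(r):=\sqrt{r/f(r)}$ of Theorem~\ref{thm:HardyHarmonic}: by Lemma~\ref{lem:alphabeta} with $(\alpha,\beta)=(1/2,-1/2)$ we have $-\Delta\Phi=\big(\tfrac{2ff''-(f')^2}{4f^2}+\tfrac{1}{4r^2}\big)\Phi$, and since $\Phi'/\Phi=\tfrac{1}{2r}-\tfrac{f'}{2f}$ the drift correction is $\tfrac{2\alpha}{r}\Phi'=\big(\tfrac{\alpha}{r^2}-\tfrac{\alpha f'}{rf}\big)\Phi$. Hence
$$ -\Delta_\mu\Phi \;=\; -\Delta\Phi+\tfrac{2\alpha}{r}\Phi' \;=\; \Big(\frac{2ff''-(f')^2}{4f^2}-\alpha\frac{f'}{rf}+\frac{4\alpha+1}{4r^2}\Big)\Phi \;=:\; W(r)\,\Phi, $$
which is precisely the weight appearing in the statement.

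Next I would run the Allegretto--Piepenbrink/Davies argument of Lemma~\ref{lem:davies} in the $\mu$-setting. For $\psi\in C_c^\infty(X\setminus\{o\})$ and $\phi=\Phi\psi$, the weighted Green formula together with the pointwise identity $|\nabla(\Phi\psi)|^2-\langle\nabla(\psi^2\Phi),\nabla\Phi\rangle=\Phi^2|\nabla\psi|^2\ge 0$ gives
$$ \int_X r^{-2\alpha}|\nabla\phi|^2\,dx \;\ge\; -\int_X\psi^2\Phi\,\Delta_\mu\Phi\,d\mu \;=\; \int_X W(r)\,\frac{\phi^2}{r^{2\alpha}}\,dx . $$
Since $\Phi>0$ is smooth on $X\setminus\{o\}$, every $\phi\in C_c^\infty(X\setminus\{o\})$ is of the form $\Phi\psi$, so the inequality holds on $C_c^\infty(X\setminus\{o\})$.

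The remaining — and I expect the only delicate — step is the passage from $C_c^\infty(X\setminus\{o\})$ to $C_c^\infty(X)$, which I would carry out exactly as in the proof of Theorem~\ref{thm:HardyHarmonic}: insert cut-offs $1-\chi_j$ vanishing near $o$, verify $\chi_j\to 0$ in the relevant (weighted) form norm, and use dominated convergence for the potential term. This is where the hypothesis $n\ge 2(1+\alpha)$ enters: $\{o\}$ has vanishing capacity with respect to $\int r^{-2\alpha}|\nabla\cdot|^2\,dx$ precisely when $n-2\alpha\ge 2$ (the local model near $o$ being $\int_{\R^n}|x|^{-2\alpha}|\nabla\cdot|^2\,dx$). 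One must also make sense of the right-hand side: near $o$ one has $W(r)\sim\frac{(n-2)(n-2-4\alpha)}{4r^2}$, so in the extremal case $n=2(1+\alpha)$ this coefficient is non-positive and the right-hand integral equals $-\infty$ unless $\phi$ vanishes at $o$, whence the inequality is trivially true there, while for $n>2(1+\alpha)$ both sides are finite and the approximation goes through verbatim.

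Finally, the Damek--Ricci specialization is pure substitution of the explicit density: by \eqref{eq:ab1412}, $\frac{2ff''-(f')^2}{4f^2}=\lambda_0(X^{p,q})+\frac{q(q-2)}{4\sinh^2 r}+\frac{p(p+2q-2)}{16\sinh^2(r/2)}$, and by \eqref{eq:fprimef}, $-\alpha\frac{f'}{rf}=\frac{\alpha}{r}\big(\frac{q}{\sinh r}-\frac{p+2q}{2}\coth(r/2)\big)$. Moving the term $\lambda_0(X^{p,q})\,\phi^2 r^{-2\alpha}$ to the left-hand side yields the stated Damek--Ricci inequality, and the requirement $n\ge 2(1+\alpha)$ reads $p+q-1\ge 2\alpha$ since $n=p+q+1$.
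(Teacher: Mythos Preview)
Your proof is correct and rests on the same ground state $\Phi=\sqrt{r/f}$ as the paper, but the packaging differs: you absorb the weight into the drift Laplacian $\Delta_\mu=\Delta-\tfrac{2\alpha}{r}\partial_r$, compute $-\Delta_\mu\Phi=W\Phi$, and run Lemma~\ref{lem:davies} once for $\Delta_\mu$, whereas the paper substitutes $\phi=r^{\alpha}\Phi\psi$, expands $r^{-2\alpha}|\nabla\phi|^2$ by hand, and integrates the cross term $2\alpha\,\phi_r\phi/r^{2\alpha+1}$ by parts in polar coordinates to generate the extra pieces $\alpha/r^{2}-\alpha f'/(rf)$ of the potential. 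Your route is shorter and makes the structural point (the weighted quadratic form is a ground-state transform of the unweighted one) transparent; the paper's direct expansion is more computational but lands on the identical $W$. Both appeal to the same approximation argument to pass from $C_c^\infty(X\setminus\{o\})$ to $C_c^\infty(X)$, and your identification of $n\ge 2(1+\alpha)$ as the vanishing threshold for the $r^{-2\alpha}$-weighted capacity of $\{o\}$ is precisely how that hypothesis enters (the paper records it at the integration-by-parts step, though for test functions vanishing near $o$ it is only genuinely needed at the extension). The Damek--Ricci specialization is identical in both.
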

\begin{proof}
    Given $\phi \in C_c^{\infty}(X\setminus \{ o\})$ and $\Phi$ (which we will specify later), we choose $\psi\in C_c^{\infty}(X\setminus \{ o\})$ such that $\phi=r^\alpha \Phi \psi$. Then, we calculate
    $$\frac{\vert \nabla \phi \vert^2}{r^{2\alpha}}= \vert \nabla \Phi\vert^2\psi^2 + \vert \nabla \psi \vert^2 \Phi^2 +2 \psi\Phi \langle \nabla \Phi, \nabla \psi \rangle +2\alpha\frac{\Phi \psi}{r}(\psi \Phi_r + \Phi \psi_r).$$
    Note that 
    $$\frac{\Phi \psi}{r}(\psi \Phi_r + \Phi \psi_r)=\frac{\phi_r \phi}{r^{2\alpha +1}} - \alpha \frac{\phi^2}{r^{2\alpha +2}}.$$
    Integrating, using that $\Phi \psi^2\in C_c^\infty(X\setminus \{ o \})$ and Green's formula, we obtain
   \begin{multline*}
       \int_X \frac{\vert \nabla \phi \vert^2}{r^{2\alpha}}dx =  \int_X (-\Delta \Phi)\Phi \psi^2 dx+ \int_X  \vert \nabla \psi \vert^2 \Phi^2dx\\
       - 2\alpha^2\int_X \frac{\phi^2}{r^{2\alpha +2}}dx + 2\alpha \int_X \frac{\phi_r \phi}{r^{2\alpha +1}}dx.
   \end{multline*}
    Changing to polar coordinates in the latter integral, using integration by parts, $f\sim r^{n-1}$ as $r\to 0$ and $n-1\geq 2\alpha +1$, and then resubstituting, we get
    
    \begin{multline*}
        2 \int_X \frac{\phi_r \phi}{r^{2\alpha +1}}dx
        = \int_0^\infty \int_{S^{n-1}}\frac{(\phi^2)_r f(r)}{r^{2\alpha +1}} d\omega dr 
        = -\int_0^\infty \int_{S^{n-1}}\left(\frac{ f(r)}{r^{2\alpha +1}}\right)_r \phi^2 d\omega dr \\
        =  (2\alpha+1) \int_X\frac{\phi^2}{r^{2\alpha+2}}dx-  \int_X \frac{f'(r)}{f(r)r^{2\alpha+1}}\phi^2dx.
    \end{multline*}
     Thus, we obtain altogether
     
     $$\int_X \frac{\vert \nabla \phi \vert^2}{r^{2\alpha}} dx
     \geq \int_X (-\Delta\Phi)\Phi \psi^2dx
     +\alpha  \int_X\frac{\phi^2}{r^{2\alpha+2}}dx
     -\alpha \int_X \frac{f'(r)\phi^2}{f(r)r^{2\alpha+1}}dx.$$
    Now, as usual,  let $\Phi = (r/f(r))^{1/2}$, and recall from Lemma~\ref{lem:alphabeta} that 
     $$ -\Delta \Phi(r) = \left(\frac{ 2 f(r)f''(r)-(f'(r))^2}{4 f(r)^2} +\frac{1}{4r^2} \right)\Phi(r). $$ 
     Thus, we get
    $$\int_X \frac{\vert \nabla \phi \vert^2}{r^{2\alpha}} dx
     \geq \int_X\left(\frac{ 2 f(r)f''(r)-(f'(r))^2}{4 f(r)^2}- \alpha \frac{f'(r)}{rf(r)}+ \frac{4\alpha +1}{4 r^2}\right) \frac{\phi^2}{r^{2\alpha}}dx.$$
     Moreover, we can use the same approximation argument as in the proof of Theorem~\ref{thm:PH} to obtain $\phi\in C_c^\infty(X^{p,q})$ and not only $\phi\in C_c^\infty(X^{p,q}\setminus \{ o\})$. 
     This finishes the proof on harmonic manifolds.
     
     On Damek-Ricci spaces $X^{p,q}$, we know $f$ explicitly, and by using \eqref{eq:fprimef} and \eqref{eq:ab1412}  
     the result follows. 
\end{proof}

\subsection{A Poincar{\'e}-Hardy-type inequality for the $P$-Laplacian}\label{sec:PHardy}

In this subsection we want to generalize our Poincar{\'e}-Hardy-type inequality to the quasi-linear $P$-Laplacian for $P \in (1,\infty)$.

Let us first introduce the $P$-Laplacian on a Riemannian manifold $(X,g)$. It is defined as
$$   \Delta_P \phi := {\Div}(| \nabla\phi|^{P-2} \nabla \phi). $$
This quasi-linear second order operator reduces to the classical linear Laplace operator $\Delta$ in the case $P=2$. In the case of a non-compact harmonic manifold $(X,g)$ and a smooth 
radial function $\phi \in C^\infty(X)$ around a pole $o\in X$, the $P$-Laplacian is again radial and given by 
\begin{equation} \label{eq:DeltaP}
\Delta_P \phi(r)=\vert \phi'(r)\vert^{P-2}L_P \phi(r),  
\end{equation}
where
\begin{equation} \label{eq:LP}
L_P \phi(r)= (P-1)\phi''(r) + \frac{f'(r)}{f(r)}\phi'(r).
\end{equation}
Moreover, we will need in Section \ref{sec:PGreen} below that harmonicity of a radial function $\phi \in C^\infty(X \setminus \{o\})$ with nowhere vanishing derivative is equivalent to the condition that $f |\phi'|^{P-2} \phi'$ is constant. This follows directly from the identity
$$ (f |\phi'|^{P-2} \phi')' = f |\phi'|^{P-2} L_P \phi = f \Delta_P \phi. $$

Our aim in this subsection is to prove a quasi-linear version of the Poincar{\'e}-Hardy inequality on Damek-Ricci spaces in the spirit of Theorem~\ref{thm:PH}. This can be seen as an generalisation of Theorem~2.5 in \cite{BDAGG17} on hyperbolic spaces to Damek-Ricci spaces. The proof uses basically the quasi-linear version of the ideas from the linear case. Clearly, the terminology of the Optimality Theory in Subsection \ref{subsec:opttheory} needs to be generalized to the quasi-linear context. However, most of the generalizations are obvious and are not discussed here. We recommend \cite{DP16} for further details and references. Moreover, we want to mention that the same restrictions on $P$ and the dimension in the theorem below also appear in \cite{BDAGG17}.

\begin{theorem}\label{thm:hardyP2}
Let $P\geq 2$, and let $X^{p,q}$ be a Damek-Ricci space of dimension $n = p+q+1 \geq 1+P(P-1)$. Let $o \in X^{p,q}$ be a pole, $r = d(o,\cdot)$, $h:= h(X^{p,q}) = (p+2q)/2$ be its Cheeger constant, and
$$\Lambda_P(X^{p,q}):= \left(\frac{h}{P}\right)^P $$ 
Furthermore, we set 
$$g(r):= \coth (r/2)- \frac{2}{p+2q}\left( \frac{q}{\sinh(r)} + \frac{1}{r}\right). $$
Then, $g(r)\geq \frac{p+q-1}{h\cdot r}$ and $g(r)\to 1$ as $r\to \infty$, and  we have for all $\phi \in C_c^{\infty}(X^{p,q})$,
\begin{multline} \label{eq:PHPineq}
\int_{X^{p,q}} | \nabla \phi |^P dx -
\Lambda_P(X^{p,q}) \int_{X^{p,q}}g(r)^{P-2} |\phi|^P dx \\
\geq \frac{h^{P-2}(P-1)^2}{P^P}\int_{X^{p,q}}\frac{g(r)^{P-2}}{r^2} |\phi|^P dx \\
+ \frac{h^{P-1}(P-2)}{P^P}\int_{X^{p,q}}\frac{g(r)^{P-2}(g(r)+\frac{1}{hr})}{r} |\phi|^P dx \\
+\frac{h^{P-2}}{P^P}\int_{X^{p,q}}g(r)^{P-2}\left(\frac{q^2-qP(P-1)}{\sinh^2(r)} + \frac{p(2h-P(P-1))}{4\sinh^2(r/2)}\right) |\phi|^P dx.
\end{multline} 
Moreover, all integrands on the right hand side of this inequality are non-negative functions.
\end{theorem}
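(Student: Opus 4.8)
The plan is to mimic the linear proof of Theorem~\ref{thm:PH}, replacing the Laplacian $\Delta$ by the linearizing operator $L_P$ from \eqref{eq:LP} and the ground state $\Phi = (r/f)^{1/2}$ by an appropriate power $\Phi = (r/f)^{(P-1)/P} h^{-\gamma}$-type function. More precisely, I would look for a positive radial supersolution $\Phi(r)$ of the $P$-Laplace-type equation $-\Delta_P \Phi - \Lambda_P(X^{p,q}) g(r)^{P-2}\Phi^{P-1} = W(r)\Phi^{P-1}$ on $X^{p,q}\setminus\{o\}$, where $W$ is the weight appearing on the right-hand side of \eqref{eq:PHPineq}. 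The natural candidate, guided by the hyperbolic case in \cite{BDAGG17} and by the exponent bookkeeping in the linear case, is $\Phi(r) = (r/f(r))^{(P-1)/P}$ (possibly with an extra factor built from $h(r)=f^{1/(n-1)}(r)$ to handle the near-origin behaviour, which is why the hypothesis $n \ge 1 + P(P-1)$ enters). One first computes $L_P\Phi$ explicitly using \eqref{eq:LP} and the Damek-Ricci volume density \eqref{eq:voldens}, \eqref{eq:fprimef}; the constants $h^{P-2}(P-1)^2/P^P$, $h^{P-1}(P-2)/P^P$, etc., should drop out of this computation exactly as the constants $1/4$, $p(p+2q-2)/16$, $q(q-2)/4$ dropped out of Lemma~\ref{lem:DRf} in the case $P=2$ (which is the specialization $P=2$, $g\equiv$ the relevant ratio).

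The second ingredient is the quasi-linear analogue of Lemma~\ref{lem:davies}, i.e.\ the ground-state-transform/Allegretto--Piepenbrink inequality for the $P$-Laplacian: if $\Phi>0$ solves $-\Delta_P\Phi = (W-V)|\Phi|^{P-2}\Phi$ on a domain $\Omega$, then $\int_\Omega |\nabla\phi|^P \ge \int_\Omega (W-V)|\phi|^P$ for all $\phi\in C_c^\infty(\Omega)$; this is standard and available in \cite{DP16}. Applying it with $\Omega = X^{p,q}\setminus\{o\}$, $V = -\Lambda_P(X^{p,q}) g^{P-2}$ and $W$ as above gives \eqref{eq:PHPineq} on $C_c^\infty(X^{p,q}\setminus\{o\})$. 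Then one extends from $C_c^\infty(X^{p,q}\setminus\{o\})$ to $C_c^\infty(X^{p,q})$ by the same capacity-of-a-point vanishing argument used at the end of the proof of Theorem~\ref{thm:HardyHarmonic} (the point $\{o\}$ has zero $P$-capacity since $n \ge P$), citing \cite[Appendix A]{DAD} for the $p$-Laplacian version.

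It remains to check the two elementary claims about $g$: that $g(r) \ge \frac{p+q-1}{h\,r}$ and that $g(r)\to 1$ as $r\to\infty$, and that each of the three integrands on the right of \eqref{eq:PHPineq} is non-negative. The asymptotic $g(r)\to 1$ is immediate since $\coth(r/2)\to 1$, $1/\sinh r\to 0$, $1/r\to 0$. The lower bound follows from $\coth(r/2)\ge 2/r$ and $\sinh r \ge r$, which give $g(r) \ge \frac{2}{r} - \frac{2}{p+2q}\cdot\frac{q+1}{r} = \frac{2}{r}\cdot\frac{p+q-1}{p+2q} = \frac{p+q-1}{h\,r}$. Non-negativity of the Hardy term $g^{P-2}r^{-2}$ and of the cross term $g^{P-2}(g + \tfrac{1}{hr})r^{-1}$ is then clear for $P\ge 2$ (using $g>0$). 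The only term requiring the dimension hypothesis is the last one: $q^2 - qP(P-1) \ge 0$ and $p(2h - P(P-1)) \ge 0$, i.e.\ $q \ge P(P-1)$ and $2h = p+2q \ge P(P-1)$; since $n = p+q+1 \ge 1 + P(P-1)$ gives $p+q \ge P(P-1)$, hence $p + 2q \ge P(P-1)$ — but $q \ge P(P-1)$ does \emph{not} follow from $p+q\ge P(P-1)$ in general, so presumably the bracket $q^2 - qP(P-1)$ should be combined with the (positive) $\sinh^2(r/2)$-term via $\sinh^2 r = 4\sinh^2(r/2)\cosh^2(r/2) \ge 4\sinh^2(r/2)$, rewriting $\frac{q^2-qP(P-1)}{\sinh^2 r}$ so that only the aggregate coefficient needs to be non-negative. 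Sorting out this regrouping — i.e.\ showing the combined $\sinh^{-2}(r/2)$-coefficient $q^2 - qP(P-1) + p(2h-P(P-1))\cdot\frac{1}{4}\cdot 4 = \dots$ is non-negative under $p+q \ge P(P-1)$ — is the step I expect to be the main (though still elementary) obstacle, together with verifying that the explicit $L_P\Phi$ computation really reproduces the stated constants and does not leave a spurious remainder; a sign error there would be fatal. The $P\ge 2$ restriction is presumably needed precisely so that $g^{P-2}$ and $(P-2)$-prefactors have the right sign and so that the ground-state transform inequality holds in the stated convex form.
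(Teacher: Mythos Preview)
Your overall strategy matches the paper's: choose a radial $\Phi>0$, compute $-\Delta_P\Phi$ exactly via the factorisation $\Delta_P\Phi=|\Phi'|^{P-2}L_P\Phi$ together with the Damek--Ricci identity of Lemma~\ref{lem:Hilfe}, invoke the quasi-linear Allegretto--Piepenbrink inequality (Lemma~\ref{lem:AAP}), and extend from $C_c^\infty(X^{p,q}\setminus\{o\})$ to $C_c^\infty(X^{p,q})$ by the $P$-capacity argument. Your check that $g(r)\ge (p+q-1)/(hr)$ and $g(r)\to 1$ is correct, and your instinct for the non-negativity of the last integrand---regroup via $\sinh^2 r = 4\sinh^2(r/2)\cosh^2(r/2)\ge 4\sinh^2(r/2)$---is exactly right: the combined $\sinh^{-2}(r/2)$-coefficient becomes $(p+q)\bigl(p+q-P(P-1)\bigr)\ge 0$ under the hypothesis (the paper does an equivalent but slightly different bound, estimating $q(q-P(P-1))\ge -pq$ and $p(p+2q-P(P-1))\ge pq$).

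There are, however, two concrete corrections. First, the exponent: the correct choice is $\Phi=(r/f)^{1/P}$, not $(r/f)^{(P-1)/P}$. With $\Phi=(r/f)^{1/P}$ one gets $\Phi'=\tfrac{1}{P}(1/r-f'/f)\Phi$, hence
\[
|\Phi'|^{P-2}=\Bigl(\tfrac{1}{P}\Bigr)^{P-2}\Bigl(\tfrac{f'}{f}-\tfrac{1}{r}\Bigr)^{P-2}\Phi^{P-2}=\Bigl(\tfrac{h}{P}\Bigr)^{P-2}g(r)^{P-2}\,\Phi^{P-2},
\]
and this is precisely the origin of the ubiquitous factor $g(r)^{P-2}$ in \eqref{eq:PHPineq}, including in the Poincar\'e term $\Lambda_P\,g^{P-2}$; combining this with $L_P\Phi$ yields $-\Delta_P\Phi=(W-V)\Phi^{P-1}$ with exactly the stated constants. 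Your exponent $(P-1)/P$ would not reproduce them. Second, no auxiliary factor of the form $f^{1/(n-1)}$ is needed in $\Phi$, and the hypothesis $n\ge 1+P(P-1)$ does \emph{not} enter through the near-origin behaviour of $\Phi$: it is used solely to guarantee non-negativity of the last integrand, via the very regrouping you identified.
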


Note again that the special choice $P=2$ in the above theorem leads to our Main Theorem A in the Introduction.

The theorem will be obtained with the help of the Agmon-Allegretto-Piepen\-brink Theorem. For convenience, we state it here in our smooth setting and for a strong solution. Note that it is a generalization of Lemma~\ref{lem:davies}. Moreover, the statement is actually an equivalence but we only state  the direction we need.

\begin{lemma}[see {\cite[Theorem~2.3]{PT07}}]\label{lem:AAP}
    Let $\Omega \subset X$ be a domain in a  Riemannian manifold $(X,g)$, $P\in (1,\infty)$. Let $\Phi \in C^{\infty}(\Omega)$ be a positive supersolution of $-\Delta_P\Phi + (V-W)|\Phi |^{P-2}\Phi = 0$ on $\Omega$. Then we have for all $\phi\in C_c^{\infty}(\Omega)$,
    $$
    \int_\Omega |\nabla \phi |^P dx\geq \int_\Omega (W-V)|\phi |^P dx.
    $$
\end{lemma}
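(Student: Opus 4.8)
The plan is to prove Lemma~\ref{lem:AAP} by a ground-state substitution, generalizing the $P=2$ argument of Lemma~\ref{lem:davies} (which rested on Green's formula) to the quasi-linear setting via the \emph{Picone inequality} for the $P$-Laplacian. The engine is the pointwise estimate: for a positive differentiable function $u$ and a nonnegative differentiable function $v$,
$$ |\nabla v|^P \ge |\nabla u|^{P-2}\,\nabla u \cdot \nabla\!\left(\frac{v^P}{u^{P-1}}\right), $$
with equality (where $\nabla u \neq 0$) precisely when $v$ is a constant multiple of $u$. Once this is available, testing it against the supersolution $\Phi$ and integrating by parts converts the right-hand side into $-\Delta_P\Phi$ paired with a nonnegative weight, at which point the supersolution hypothesis finishes the estimate.

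First I would establish the Picone inequality itself. Expanding the right-hand side using $\nabla(v^P/u^{P-1}) = P\,(v^{P-1}/u^{P-1})\nabla v - (P-1)(v^P/u^P)\nabla u$ and writing $a = (v/u)\nabla u$, $b = \nabla v$, the claim reduces to the elementary inequality
$$ |b|^P + (P-1)|a|^P - P\,|a|^{P-2} a\cdot b \ge 0, $$
which is exactly the gradient (subdifferential) inequality $F(b) \ge F(a) + \nabla F(a)\cdot(b-a)$ for the strictly convex function $F(t) = |t|^P$ on the tangent space, applied at the points $a$ and $b$. This is the only genuinely analytic input and it is completely routine.

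Next I would apply the inequality with $u = \Phi$ and $v = |\phi|$ for $\phi \in C_c^\infty(\Omega)$. Since $|\nabla|\phi|| = |\nabla\phi|$ almost everywhere, the left-hand side integrates to $\int_\Omega |\nabla\phi|^P\,dx$. Setting $w := |\phi|^P/\Phi^{P-1}$, which is a nonnegative function of class $C^1$ with compact support in $\Omega$ (here $P>1$ makes $|\phi|^P$ continuously differentiable, while $\Phi$ is smooth and strictly positive), the divergence theorem gives
$$ \int_\Omega |\nabla\Phi|^{P-2}\nabla\Phi \cdot \nabla w\,dx = -\int_\Omega w\,\Div(|\nabla\Phi|^{P-2}\nabla\Phi)\,dx = -\int_\Omega w\,\Delta_P\Phi\,dx, $$
with no boundary contribution. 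The supersolution hypothesis $-\Delta_P\Phi + (V-W)\Phi^{P-1} \ge 0$ reads $-\Delta_P\Phi \ge (W-V)\Phi^{P-1}$, and testing this against the nonnegative weight $w$ yields $-\int_\Omega w\,\Delta_P\Phi\,dx \ge \int_\Omega (W-V)\Phi^{P-1} w\,dx = \int_\Omega (W-V)|\phi|^P\,dx$. Chaining the three displays proves the lemma.

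The hard part will not be the convexity estimate but the regularity bookkeeping that justifies the integration by parts. For $1 < P < 2$ the vector field $|\nabla\Phi|^{P-2}\nabla\Phi$ is merely continuous at the critical set where $\nabla\Phi = 0$, so I would read the supersolution property in the weak sense and verify that $w = |\phi|^P/\Phi^{P-1}$ is an admissible nonnegative test function, lying in $W^{1,P}_c(\Omega)$, so that the weak formulation applies verbatim; for $P \ge 2$ (the range relevant to Theorem~\ref{thm:hardyP2}) the field is continuous and the argument is classical. A secondary point is the non-smoothness of $v = |\phi|$ on $\{\phi = 0\}$, handled by $\nabla|\phi| = 0$ a.e. there together with $|\nabla|\phi|| = |\nabla\phi|$ a.e., so that the Picone inequality integrates correctly. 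The full treatment, including the converse implication and the density arguments, is carried out in \cite{PT07}.
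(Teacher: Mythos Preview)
Your proof is correct. The paper does not supply its own proof of Lemma~\ref{lem:AAP}; it simply quotes the statement from \cite[Theorem~2.3]{PT07} and uses it as a black box in the proof of Theorem~\ref{thm:hardyP2}. Your Picone-inequality argument is precisely the standard route to this result (and is essentially what underlies the proof in \cite{PT07}): the pointwise convexity estimate $|b|^P + (P-1)|a|^P \ge P\,|a|^{P-2}a\cdot b$ applied with $a = (v/u)\nabla u$, $b = \nabla v$, followed by an integration by parts against the nonnegative compactly supported test function $w = |\phi|^P/\Phi^{P-1}$, reduces the claim directly to the supersolution inequality. The regularity caveats you flag---the weak formulation when $1<P<2$ so that $|\nabla\Phi|^{P-2}\nabla\Phi$ is merely continuous at critical points, and the a.e.\ differentiability of $|\phi|$---are the right ones and are handled exactly as you indicate. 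Nothing is missing.
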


\begin{proof}[Proof of Theorem~\ref{thm:hardyP2}]
Set $\Phi(r) = (r/f(r))^{1/P}$ and let $L_P$ be the operator introduced in \eqref{eq:LP}. It is not difficult to see (omitting the argument $r$),
\begin{multline*}
    L_P\Phi= \left(- \left(\frac{P-1}{P}\right)^2\frac{1}{r^2} - \frac{P-2}{P^2}\frac{f'}{rf} + \frac{P^2-P-1}{P^2} \frac{(f')^2}{f^2}- \frac{P-1}{P}\frac{f''}{f} \right)\Phi \\
    =-\frac{1}{P^2} \left(\frac{(P-1)^2}{r^2} + \frac{(P-2)f'}{rf} + \frac{(f')^2}{f^2}+ P(P-1)\left(\frac{f''}{f}- \frac{(f')^2}{f^2}\right) \right)\Phi.
\end{multline*}
The basic idea is to use Lemma~\ref{lem:AAP}, i.e., we need to show that $\Phi$ is a supersolution with respect to the corresponding $P$-Schrödinger operator, i.e., the value inside the latter parenthesis should be non-negative. Since $f'/f > 0$ (see \cite[Prop. 2.2]{RS02}), it suffices to prove $(f')^2/f^2 + P(P-1)(f''/f-(f')^2/f^2) \ge 0$. Once this is shown, the lemma provides us with a particular Hardy inequality. However, it seems that we need more knowledge about $f$ to prove non-negativity of this term. Since our Riemannian manifold is the Damek-Ricci space $X^{p,q}$ with the explicit expression \eqref{eq:voldens} for the volume density $f$, we conclude from Lemma~\ref{lem:Hilfe} by setting $a=1-P(P-1)$ and $b=-P(P-1)$ and the relation $4\lambda_0(X^{p,q})=h^2$ that
\begin{multline}\label{eq:PAndF}
    \frac{(f')^2}{f^2}+ P(P-1)\left(\frac{f''}{f}- \frac{(f')^2}{f^2}\right)=\\
    h^2 + \frac{q(q-P(P-1))}{\sinh^2(r)}+ \frac{p(p+2q-P(P-1))}{4\sinh^2(r/2)}.
\end{multline}
Using the assumption $n-1=p+q\geq P(P-1)$, we estimate
\[ \frac{q(q-P(P-1))}{\sinh^2(r)}+ \frac{p(p+2q-P(P-1))}{4\sinh^2(r/2)} \ge pq \left(\frac{1}{4\sinh^2(r/2)} - \frac{1}{\sinh^2(r)}\right) \ge 0, \]
where we used $\sinh(r)=2\sinh(r/2)\cosh(r/2)$ and $\cosh(r/2) \geq 1$ for all $r\in \R$ in the last inequality. This shows on the one hand that the last integrand on the right hand side of \eqref{eq:PHPineq} is non-negative, and also that
\[\frac{(f')^2}{f^2}+ P(P-1)\left(\frac{f''}{f}- \frac{(f')^2}{f^2}\right) \geq h^2 > 0. \]



Hence, for $P(P-1)\leq n-1$ and $P\geq 2$, $\Phi$ is a positive supersolution. By Lemma~\ref{lem:AAP}, we conclude the non-negativity of the corresponding energy functional.

To finish the proof of the theorem, we only need to analyse $\Delta_P \Phi(r) = \vert \Phi'(r)\vert^{P-2}L_P\Phi(r)$. 
Note that $$\vert \Phi'(r)\vert^{P-2}= \left(\frac{\Phi}{P}\right)^{P-2}\left| \frac{1}{r}- \frac{f'}{f}\right|^{P-2}.$$
One could use \eqref{eq:meancurvDR} and estimate $| \frac{1}{r}- \frac{f'}{f}| = \frac{f'}{f}-\frac{1}{r} \geq \frac{p+q-1}{r}$ for a simplification. But we want to take the precise term instead and write with the help of \eqref{eq:fprimef},  
$$ \frac{f'}{f}-\frac{1}{r}= \frac{p+2q}{2}\coth (r/2)- \frac{q}{\sinh (r)}- \frac{1}{r}= \frac{p+2q}{2}g(r)=h\cdot g(r),$$
with $g$ and $h=h(X^{p,q})$ defined as in the theorem.
This yields the estimate $g(r) \ge \frac{p+q-1}{h \cdot r}$ in the theorem as well as the asymptotics $g(r) \to 1$ as $r \to \infty$.

Substituting \eqref{eq:PAndF} into
\begin{multline*}
    (-\Delta_P)\Phi=  \frac{h^{P-2}}{P^P}g^{P-2}\cdot \\
    \left(\frac{(P-1)^2}{r^2} + \frac{(P-2)f'}{rf} + \frac{(f')^2}{f^2}+ P(P-1)\left(\frac{f''}{f}- \frac{(f')^2}{f^2}\right) \right)\Phi^{P-1}
\end{multline*}
and using $\frac{f'}{f} = h(g+\frac{1}{hr})$, the inequality of the theorem follows from Lemma~\ref{lem:AAP} and the usual approximation argument to extend it to all functions $\phi \in C_c^\infty(X^{p,q})$.
\end{proof}

\begin{remark}
 It follows from the proof that inequality \eqref{eq:PHPineq} in the theorem can be simplified by replacing $g(r)$ by the easier lower bound $\frac{p+q-1}{hr} > 0$,
 but we expect that the corresponding $P$-Schrödinger operator in the theorem is critical on $X^{p,q}\setminus \{ o\}$, which would be lost by this simplification.
\end{remark}



\section{Another Poincar{\'e}-Hardy-type inequality for the $P$-Laplacian}\label{sec:PGreen}

In this section we derive the Green function for the $P$-Lapacian and follow the general arguments given in \cite[Section 5.1]{BMR} (see also \cite{BDAGG17} for the case of the real hyperbolic space) to obtain a Hardy weight for the $P$-Laplacian on non-compact harmonic manifolds, which is optimal under a certain condition of the dimension. Similarly as in \cite{BMR}, we also study asymptotics of this Hard weight at the pole.  

\subsection{The Green function of the P-Laplacian on a non-compact harmonic manifold}

The $P$-Green function $G = G_P: X \times X \setminus \{(x,x) \mid x \in X\}\to \mathbb{R}$ is characterised by the following conditions (see, e.g., \cite{Ku99}):
\begin{itemize}
    \item[(a)] $\Delta_{P,x} G(x,y) = 0$ for all $ x \neq y$,
    \item[(b)] $G(x,y) \ge 0$ for all $x \neq y$,
    \item[(c)] For all $y \in X$, we have $\lim_{x \to \infty}G(x,y) = 0$,
    \item[(d)] If $1 < P \le n$: For all $y \in X$, we have $\lim_{x \to y} G(x,y) = \infty$,
    \item[(e)] For all $\phi \in C_c^\infty(X)$
    and all $y \in X$, we have 
    $$ \int_X \left\langle | \nabla_x G(x,y) |^{p-2} \nabla_x G(x,y), \nabla \phi(x)\right\rangle dx = \phi(y), $$
    that is, $- \Delta_{P,y} G(x,\cdot) = \delta_y$ in the sense of distributions.
\end{itemize}
In the case of a harmonic manifold, we can fix a point $o \in X$ and consider the Green function as a function of the radius, that is, we can write $G(d(x,o)) = G(x,o)$. The Green function of the $P$-Laplacian of the Euclidean space $\mathbb{R}^n$ for $1 < P < n$ is given by 
$$ G(r) = \frac{P-1}{(n-P)} \omega_n^{-1/(P-1)} r^{-\frac{n-P}{P-1}}, $$ 
where $\omega_n$ is the volume of the unit sphere $S^{n-1}$ in $\mathbb{R}^n$. Let us now calculate the Green function of the $P$-Laplacian of a non-compact non-Euclidean harmonic manifold $X$ of dimension $n$ with density function $f$. Recall from Subsection \ref{sec:PHardy} that $P$-harmonicity of a radial function $G \in C^\infty(X\setminus\{o\})$ with nowhere vanishing derivative is given when $f|G'|^{P-2}G'$ is constant. Assuming $G' < 0$ and $\lim_{r \to \infty} G(r) = 0$, and setting 
$$ f |G'|^{P-2}G' = - f |G'|^{P-1} \cong -\beta^{P-1}, $$
for some constant $\beta > 0$, we find by integration
\begin{equation} \label{eq:Gharm} 
G(r) = \beta \int_r^\infty \frac{dt}{(f(t))^{1/(P-1)}}. 
\end{equation}
Since $X$ is non-flat, the mean curvature $h$ of its horospheres must be strictly positive (see, e.g., \cite[Corollary 2.8]{KP13}). This result is a consequence of the fact that $X$ cannot have polynomial volume growth by \cite[Theorem 4.2]{RS02} and that the density function $f$ of $X$ is an exponential polynomial by \cite{Ni05}. Therefore, $X$ must have exponential volume growth and the integral on the right hand side of \eqref{eq:Gharm} is finite for all $r > 0$. Let us study
$\lim_{r \to 0} G(r)$ for $G(r)$ defined in \eqref{eq:Gharm}. It follows from \cite[p. 82]{Willm93} for arbitrary Riemannian manifolds that
\begin{equation} \label{eq:frto0} 
f(t) = t^{n-1}\left( 1 - \frac{s(o)}{6n} t^2 + O(t^4) \right) \qquad \text{for $t \to 0$,} 
\end{equation}
where $s(o)$ is the scalar curvature of $X$ at $o \in X$. Consequently, we can find $r' > 0$ and $0 < c_1 < c_2$ such that, for $0 < r < r'$,
\begin{equation} \label{eq:c1c2est} 
c_1 \int_r^{r'} \frac{dt}{t^{(n-1)/(P-1)}} \le  \int_r^{r'} \frac{dt}{(f(t))^{1/(P-1)}} 
\le c_2 \int_r^{r'} \frac{dt}{t^{(n-1)/(P-1)}}, 
\end{equation}
which implies $\lim_{r \to 0} G(r) = \infty$ in the case $1 < P \le n$. The Ansatz \eqref{eq:Gharm} implies
$$ G'(r) = - \frac{\beta}{(f(r))^{1/(P-1)}}, $$
which shows that the assumption $G' < 0$ was justified. The arguments so far and the explicit expression for $G$ imply that $G$ satisfies conditions (a), (b), (c) and (d). 

It remains to verify condition (e). 
It is known that non-compact harmonic manifolds do not have conjugate points, since simply connected harmonic manifolds with conjugate points are Blaschke manifolds by Allamigeon's Theorem (see \cite[Chapter 6F]{Besse78} or \cite[Section 5.1]{Krey10}). We can therefore use global geodesic polar coordinates $(r,\theta) \in (0,\infty) \times S^{n-1} \to X \setminus \{y\}$ around the pole $y \in X$. For simplicity of notation, we switch between points of $X$ and their representations in polar coordinates. We have $\nabla_x G(x,y) = G'(r) \xi(r,\theta)$, where $r = d(x,y)$ and $\xi$ defined on $X \setminus\{y\}$ is the outward unit normal vector field of concentric spheres around $y$. Consequently, we have
\begin{multline*} 
\left\langle | \nabla_x G(x,y) |^{P-2} \nabla_x G(x,y), \nabla \phi(x)\right\rangle = |G'(r)|^{P-2} G'(r) \frac{\partial}{\partial r} \phi(r,\theta) \\ = 
- \frac{\beta^{P-1}}{f(r)} \frac{\partial}{\partial r} \phi(r,\theta).
\end{multline*}
This implies
\begin{multline*}
\int_X \left\langle | \nabla_x G(x,y) |^{P-2} \nabla_x G(x,y), \nabla \phi(x)\right\rangle dx \\ = - \int_{S^{n-1}} \int_0^\infty  \frac{\beta^{P-1}}{f(r)} \frac{\partial}{\partial r} \phi(r,\theta) f(r) dr d\theta = \beta^{P-1} {\rm{vol}}(S^{n-1}) \phi(y).
\end{multline*}
This shows that (e) is satisfied if we choose $\beta = 1/{\omega_n^{1/(P-1)}}$. Therefore the Green function of the $P$-Laplacian of an $n$-dimensional non-compact harmonic manifold is given by
$$ G(d(x,y)) = \int_{d(x,y)}^\infty \frac{dt}{(\omega_n f(t))^{1/(P-1)}}. $$
Moreover, we see from estimate \eqref{eq:c1c2est}
that, in the case $n < P < \infty$, the limit
$\gamma := \lim_{r \to 0} G(r)$ is a finite positive real number. Note that this expression agrees with the boxed formula in \cite[p. 51]{KP13} for the standard Laplace operator, that is, the case $P=2$.

\subsection{A Poincar\'e-Hardy-type inequality for the $P$-Laplacian based on the $P$-Green function}

The following theorem is an extension of \cite[Proposition~1.1]{BDAGG17} from real hyperbolic spaces to arbitrary non-flat harmonic manifolds. The proof follows closely the one given in \cite{BDAGG17} which, in turn, is based on the arguments given in \cite[Section 5.1]{BMR}.


\begin{theorem} \label{thm:hardyPgreen}
    Let $X$ be a non-compact non-Euclidean harmonic manifold of dimension $n$, $o\in X$ a pole, and $r=d(o,\cdot)$. Let $1 < P < \infty$ and $G(r)$ be the corresponding $P$-Green function. Then 
    we have
    $$
    W:=\left(\frac{P-1}{P}\right)^P \left| \frac{\nabla G}{G}\right|^P \ge \Lambda_P := \left( \frac{h}{P} \right)^P.   $$
    In the case $1 < P \le n$, $W$ is an optimal Hardy weight of $-\Delta_P$ in $X\setminus \{o\}$.
    
    For all $1 < P < \infty$ and all
    $\phi \in C_c^\infty(X)$,
    we have
    \begin{equation} \label{eq:greenhardy} 
    \int_X | \nabla \phi |^P dx - \Lambda_P \int_X |\phi|^P dx \ge \int_X \widetilde W |\phi|^P dx \end{equation}
    with $\widetilde W = W - \Lambda_P \ge 0$.
    Moreover,
    we have the following asymptotics, 
    \begin{align} \label{eq:asr0}
        \widetilde W(r) \sim \begin{cases} 
         \left(\frac{n- P}{P} \right)^P r^{-P} &\quad \text{for } 1< P< n \\ 
        \left(\frac{P-1}{P}\right)^P|r\log(r)|^{-P} &\quad \text{for } P= n \\
        C_{P,f}\,\, r^{- \frac{P(n-1)}{P-1}}  &\quad \text{for } n< P< \infty \end{cases}  \qquad \text{as } r\to 0,
    \end{align}
    where $C_{P,f}= \left(\frac{P-1}{P}\right)^P\left(\int_0^\infty \frac{dt}{( f(t))^{1/(P-1)}}\right)^{-P}$. We also have $\lim_{r \to \infty} \widetilde W(r) = 0$. 
\end{theorem}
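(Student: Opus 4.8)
The plan is to follow the Green-function strategy from \cite{BMR} and \cite{BDAGG17}, adapted to the harmonic setting where we already have the explicit formula $G(r) = \int_r^\infty (\omega_n f(t))^{-1/(P-1)}\, dt$ with $G'(r) = -(\omega_n f(r))^{-1/(P-1)}$. First I would establish the lower bound $W \ge \Lambda_P$. Since $|\nabla G/G| = |G'(r)|/G(r)$ is radial, I compute
$$ \left| \frac{\nabla G}{G} \right| = \frac{(\omega_n f(r))^{-1/(P-1)}}{\int_r^\infty (\omega_n f(t))^{-1/(P-1)}\, dt}. $$
The denominator is $\int_r^\infty (\omega_n f(t))^{-1/(P-1)}\, dt$, and since $f$ has exponential volume growth with $\lim_{t\to\infty} f(t)^{1/t} = e^h$ (this is the content of the Cheeger-constant discussion in the introduction, see \cite{PS15}), for any $\varepsilon > 0$ one has $f(t)^{-1/(P-1)} \le f(r)^{-1/(P-1)} e^{-(h-\varepsilon)(t-r)/(P-1)}$ for $t \ge r$ large, giving $\int_r^\infty f(t)^{-1/(P-1)}\, dt \le \frac{P-1}{h-\varepsilon} f(r)^{-1/(P-1)}$; a matching lower bound comes from $f'/f \le$ its limit $h$ (monotonicity of $f'/f$ on harmonic manifolds, cf. \cite{RS02, Kn12}), actually more cleanly: since $(\log f)' = f'/f$ decreases to $h$, we have $f(t) \le f(r) e^{h(t-r)} \cdot (\text{correction})$ — the clean way is to note $f'/f \ge h$ is false, so instead use that $f'/f \to h$ monotonically from above, whence $f(t) \ge f(r) e^{h(t-r)}$ is false too. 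The correct monotone statement (from strict negativity of the derivative of $f'/f$, see \cite[Prop.~2.2]{RS02}) gives $f'/f \downarrow h$, so $\int_r^\infty f(t)^{-1/(P-1)}dt \le f(r)^{-1/(P-1)}\int_r^\infty e^{-\frac{1}{P-1}\int_r^t (f'/f)(s)\,ds}dt \le f(r)^{-1/(P-1)} \cdot \frac{P-1}{(f'/f)(r)}$, and since $(f'/f)(r) \ge h$ for all $r$, this yields exactly $|\nabla G/G| \ge \frac{1}{P-1}\cdot\frac{f'(r)}{f(r)} \ge \frac{h}{P-1}$, hence $W = \left(\frac{P-1}{P}\right)^P |\nabla G/G|^P \ge \left(\frac{h}{P}\right)^P = \Lambda_P$. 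This is the step I expect to require the most care, since it hinges on the sharp asymptotic $\lim (f'/f) = h$ together with monotonicity.

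Next, for the Hardy inequality \eqref{eq:greenhardy} and optimality, I would invoke the general principle (DFP/Devyver--Pinchover, as formalized for the $P$-Laplacian in \cite{DP16, BMR}): if $G$ is a positive $P$-harmonic function on $X \setminus \{o\}$, then the \emph{supersolution construction} $\Phi := G^{(P-1)/P}$ satisfies $-\Delta_P \Phi = \left(\frac{P-1}{P}\right)^P |\nabla G/G|^P \Phi^{P-1} = W\Phi^{P-1}$ on $X\setminus\{o\}$, so $\Phi$ is a positive solution of $-\Delta_P u - W |u|^{P-2}u = 0$ there; by the $P$-Laplacian Agmon--Allegretto--Piepenbrink theorem (Lemma~\ref{lem:AAP}) this gives $\int_X |\nabla\phi|^P \ge \int_X W|\phi|^P$ for $\phi \in C_c^\infty(X\setminus\{o\})$, and then the capacity/approximation argument from the proof of Theorem~\ref{thm:HardyHarmonic} extends it to all of $C_c^\infty(X)$. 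Subtracting $\Lambda_P \int |\phi|^P$ and writing $\widetilde W = W - \Lambda_P \ge 0$ (by the first part) gives \eqref{eq:greenhardy}. For optimality when $1 < P \le n$: criticality follows because $\Phi = G^{(P-1)/P}$ is a global minimal positive solution — minimal growth at infinity because $G \to 0$ there while a competing positive supersolution cannot decay faster (Khas'minski\u\i-type argument), and minimal growth at $o$ because in the range $1 < P \le n$ we have $G(r) \to \infty$ as $r \to 0$ (shown via \eqref{eq:c1c2est}), so $\Phi \to \infty$ at $o$, matching the fundamental-solution blow-up; null-criticality reduces to checking $\Phi \notin L^P(X\setminus\{o\}, W\,dx)$, i.e. $\int_0^\infty G(r)^{P-1} |\nabla G/G|^P f(r)\, dr = \int_0^\infty (\omega_n f(r))^{-1}(-G'(r))\, f(r)\, dr = \omega_n^{-1}\int_0^\infty (-G'(r))\,dr = \omega_n^{-1} G(0^+) = \infty$ precisely when $G(0^+) = \infty$, which is the case iff $1 < P \le n$.

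Finally, for the asymptotics \eqref{eq:asr0} as $r \to 0$, I would use \eqref{eq:frto0}, namely $f(t) = t^{n-1}(1 + O(t^2))$, so $(\omega_n f(t))^{-1/(P-1)} \sim \omega_n^{-1/(P-1)} t^{-(n-1)/(P-1)}$ near $0$. Then $-G'(r) = (\omega_n f(r))^{-1/(P-1)}$ and the behaviour of $G(r) = \int_r^\infty(\omega_n f(t))^{-1/(P-1)}dt$ splits into three cases according to whether the exponent $(n-1)/(P-1)$ exceeds, equals, or is below $1$: for $1 < P < n$ the integral diverges like $\frac{P-1}{n-P}\omega_n^{-1/(P-1)} r^{-(n-P)/(P-1)}$, so $|\nabla G/G| = -G'/G \sim \frac{n-P}{P-1}\cdot\frac1r$ and $\widetilde W \sim W \sim \left(\frac{n-P}{P}\right)^P r^{-P}$ (the constant $\Lambda_P$ being negligible); for $P = n$ the integral diverges logarithmically, $G(r) \sim \omega_n^{-1/(n-1)}|\log r|$ and $-G'(r) \sim \omega_n^{-1/(n-1)} r^{-1}$, giving $|\nabla G/G| \sim \frac{1}{r|\log r|}$ and $\widetilde W \sim \left(\frac{P-1}{P}\right)^P |r\log r|^{-P}$; for $n < P < \infty$ the integral converges, $G(r) \to \gamma = \int_0^\infty(\omega_n f(t))^{-1/(P-1)}dt$ finite and positive, while $-G'(r) \sim \omega_n^{-1/(P-1)} r^{-(n-1)/(P-1)}$, so $|\nabla G/G| \sim \gamma^{-1}\omega_n^{-1/(P-1)} r^{-(n-1)/(P-1)}$ and $\widetilde W \sim C_{P,f}\, r^{-P(n-1)/(P-1)}$ with $C_{P,f} = \left(\frac{P-1}{P}\right)^P\gamma^{-P} = \left(\frac{P-1}{P}\right)^P\left(\int_0^\infty (f(t))^{-1/(P-1)}dt\right)^{-P}$ after absorbing the $\omega_n$ factors into $\gamma$. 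The claim $\lim_{r\to\infty}\widetilde W(r) = 0$ follows since $|\nabla G/G| = \frac{1}{P-1}\frac{f'(r)}{f(r)} \to \frac{h}{P-1}$, so $W(r) \to \Lambda_P$ and $\widetilde W(r) \to 0$. The only mild subtlety here is making the error terms in \eqref{eq:frto0} propagate correctly through the integration in the borderline case $P=n$, but this is routine.
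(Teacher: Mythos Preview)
Your overall strategy matches the paper's, but several steps contain genuine errors that need fixing.

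\textbf{Lower bound $W\ge\Lambda_P$.} Your chain
\[
\int_r^\infty f(t)^{-1/(P-1)}dt \;\le\; f(r)^{-1/(P-1)}\int_r^\infty e^{-\frac{1}{P-1}\int_r^t (f'/f)(s)\,ds}dt \;\le\; f(r)^{-1/(P-1)}\cdot\frac{P-1}{(f'/f)(r)}
\]
has the wrong direction in the last step: since $f'/f$ is \emph{decreasing}, for $s\ge r$ one has $(f'/f)(s)\le(f'/f)(r)$, which gives $\int_r^t(f'/f)\le(f'/f)(r)(t-r)$ and hence the \emph{opposite} inequality. The correct argument (this is what the paper does) uses the lower bound $(f'/f)(s)\ge h$ for all $s$, giving $\int_r^\infty f^{-1/(P-1)}\le\frac{P-1}{h}f(r)^{-1/(P-1)}$ directly; equivalently insert $1\le\frac{1}{h}\frac{f'}{f}$ and integrate. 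Your intermediate claim $|\nabla G/G|\ge\frac{1}{P-1}\frac{f'(r)}{f(r)}$ is therefore false; only $|\nabla G/G|\ge\frac{h}{P-1}$ survives.

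\textbf{Optimality and null-criticality.} Your null-criticality computation drops a factor of $G^{-1}$: one has $G^{P-1}|G'/G|^P f = G^{-1}|G'|^P f = G^{-1}\omega_n^{-1}(-G')$, so the integral is proportional to $\int_0^\infty(-\log G)'\,dr$, which diverges for \emph{every} $P>1$ because $G(r)\to 0$ at infinity. Hence null-criticality does not single out the range $1<P\le n$. The actual restriction to $P\le n$ enters through \emph{criticality}: the supersolution construction $\Phi=G^{(P-1)/P}$ yields a critical operator via \cite[Theorem~1.5(1)]{DP16} only when the Green function blows up at $o$, i.e.\ when $1<P\le n$; for $P>n$ one needs the modified weight in the Remark following the theorem. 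The paper simply invokes \cite[Theorem~1.5(1)]{DP16} and \cite[Theorem~2.1, Corollary~2.3]{DAD} rather than reproving criticality by hand.

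\textbf{Limit at infinity.} The identity $|\nabla G/G|=\frac{1}{P-1}\frac{f'(r)}{f(r)}$ you use at the end is false pointwise (it would force equality in the first part). The paper obtains $\lim_{r\to\infty}G'/G=-h/(P-1)$ by L'H\^opital applied to the quotient defining $G'/G$; your asymptotic conclusion is right but needs this argument.

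Your treatment of the $r\to 0$ asymptotics is fine and essentially equivalent to the paper's L'H\^opital computations.
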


\begin{proof}
    Let us first prove the inequality $W(r) \ge \Lambda_P$. Let $f$ be the density function of $X$. We know from \cite[Section 2]{RS03} that the quotient $f'(r)/f(r)$ is monotone descreasing with $h = \lim_{r \to \infty} f'(r)/f(r)$. Since $X$ is non-flat, we have $h > 0$. Using $f'/f \ge h$, we obtain
    $$ \int_r^\infty \frac{dt}{(f(t))^{1/(P-1)}} \le \frac{1}{h} \int_r^\infty \frac{f'(t)dt}{(f(t))^{1+1/(P-1)}} = \frac{P-1}{h} \left( f(r) \right)^{-1/(P-1)}. $$
    This implies that
    $$ \left\vert \frac{G'(r)}{G(r)} \right\vert =  \left((f(r))^{1/(P-1)} \int_r^\infty \frac{dt}{(f(t))^{1/(P-1)}}  \right)^{-1} \ge \frac{h}{P-1}, $$
    and therefore
    $$ W(r) = \left(\frac{P-1}{P}\right)^P \left\vert \frac{G'(r)}{G(r)} \right\vert^P \ge \left( \frac{P-1}{P} \right)^P \left( \frac{h}{P-1}\right)^P = \Lambda_P. $$
    Applying \cite[Theorem~1.5(1)]{DP16}, we conclude that $W$ is an optimal Hardy weight of $- \Delta_P$ in $X \backslash \{o\}$ in the case $1 < P \le n$. 
    
    Choosing $\rho = G$ in
    \cite[Theorem 2.1]{DAD},
    we conclude for all $\phi \in C_c^\infty(X_0)$ that
    $$ \int_{X_0} | \nabla \phi |^P dx \ge \int_{X_0} W | \phi|^P dx $$
    for $X_0 = X \setminus \{o\}$ if $1 < P \le n$
    and $X_0 =X$ if $P > n$.
    Since $\{o\} \subset X$ is a compact set of zero $p$-capacity, we can apply \cite[Corollary 2.3]{DAD} to extend the inequality to the whole manifold $X$ in the case $1 < P \le n$. This shows inequality \eqref{eq:greenhardy}. 
    
    It remains to prove the asymptotics. Let us begin with the asymptotics as $r \to 0$. Note that $\widetilde W(r) \sim a(r)$ is equivalent to $W(r) \sim a(r)$ in the case $\lim_{r \to 0} a(r) = \infty$, and it suffices to prove the asymptotics in \eqref{eq:asr0} for $W$ instead of $\widetilde W$.
    
    Let us first consider the case $P > n$. Using the estimate \eqref{eq:c1c2est}, we conclude that
    $$ \int_0^r \frac{dt}{(f(t))^{1/(P-1)}} = O(r^{\frac{P-n}{P-1}}) \quad \text{as $r \to 0$,} $$
    and therefore, employing \eqref{eq:frto0},
    \begin{eqnarray*} 
    \left\vert \frac{G'(r)}{G(r)} \right\vert &=& \left[ \left( \int_0^\infty \frac{dt}{(f(t))^{1/(P-1)}}\right)^{-1} + O(r^{(P-n)/(P-1)}) \right] (f(r))^{-1/(P-1)} \\
    &\sim& \left( \int_0^\infty \frac{dt}{(f(t))^{1/(P-1)}}\right)^{-1} r^{-(n-1)/(P-1)}\quad \text{as $r \to 0$.}
    \end{eqnarray*}
    This shows, in the case $P > n$, that
    $$ W(r) = \left(\frac{P-1}{P}\right)^P \left\vert \frac{G'(r)}{G(r)} \right\vert^P \sim C_{P,f} r^{-P(n-1)/(P-1)} \quad \text{as $r \to 0$.} $$
    Now, we turn to the case $1 < P < n$. We have $\lim_{r \to 0} G(r) = \infty$ by property (d) of the $P$-Green function and,
    by using \eqref{eq:frto0},
    $$ \lim_{r \to 0} rG'(r) = - \frac{1}{\omega^{1/(P-1)}} \lim_{r \to 0} \frac{r}{r^{(n-1)/(P-1)}} = - \infty. $$
    We can apply L'H\^opital and obtain
    $$ \lim_{r \to 0} \frac{rG'(r)}{G(r)} = 1 + \lim_{r \to 0} \frac{rG''(r)}{G'(r)} = 1 - \frac{1}{P-1} \lim_{r \to 0} \frac{rf'(r)}{f(r)}. 
    $$
    Note that $f'(r)/f(r)$ is the mean curvature of metric spheres of radius $r$, and it is well-known for arbitrary Riemannian manifolds that $f'(r)/f(r) \sim (n-1)/r$ as $r \to 0$ (in the case of a harmonic manifold, we have $rf'(r)/f(r) = {\rm{trace}}(C)$ with $C$ in \cite[6.33]{Besse78} and the statement follows from \cite[6.36]{Besse78}). Consequently, we have
    $$ \frac{G'(r)}{G(r)} \sim \frac{P-n}{P-1} \cdot \frac{1}{r} \quad \text{as $r \to 0$.} $$
    From this we conclude that, in the case $1 < P < n$,
    $$ W(r) = \left(\frac{P-1}{P}\right)^P \left\vert \frac{G'(r)}{G(r)} \right\vert^P \sim \left( \frac{n-P}{P} \right)^P r^{-P} \quad \text{as $r \to 0$.} $$
    Let us now consider the remaining case $P=n$. Similarly as before we have $\lim_{r \to 0} r \log r G'(r) = - \infty$ and L'H\^opital yields
    \begin{multline*} 
    \lim_{r \to 0} \frac{r\log r G'(r)}{G(r)} = 1 + \lim_{r \to 0} \log r \frac{G'(r)+rG''(r)}{G'(r)} \\ = 1 + \lim_{r \to 0} \log r \left( 1 - \frac{1}{P-1} \frac{rf'(r)}{f(r)} \right) = 1 + \lim_{r \to 0} \log r \left( \frac{P-n}{P-1} + O(r^2) \right). 
    \end{multline*}
    Here we used the slightly stronger fact that $r f'(r)/f(r) = (n-1)+ O(r^2)$ as $r \to 0$ (this follows, e.g., from \cite[Lemma 12.2]{GV79}, where an explicit Taylor expansion is given up to the sixth term). Since we assume $P=n$, we conclude that
    $$ \frac{G'(r)}{G(r)} \sim \frac{1}{r \log r} \quad \text{as $r \to 0$,} $$
    and therefore, in the case $P=n$,
    $$ W(r) = \left(\frac{P-1}{P}\right)^P \left\vert \frac{G'(r)}{G(r)} \right\vert^P \sim \left( \frac{P-1}{P} \right)^P |r \log r|^{-P} \quad \text{as $r \to 0$.}$$
    
   Finally, the proof of $\lim_{r \to \infty} \widetilde W(r) = 0$ is a straightforward application of L'H\^opital. We have
   $$ \lim_{r \to \infty} \frac{G'(r)}{G(r)} = -  \lim_{r \to \infty} \frac{1/(f(r))^{1/(P-1)}}{\int_r^\infty \frac{dt}{(f(t))^{1/(P-1)}}} = - \frac{1}{P-1} \lim_{r \to \infty} \frac{f'(r)}{f(r)} = - \frac{h}{P-1}, $$
   and therefore
   $$ \lim_{r \to \infty} W(r) = \left( \frac{P-1}{P} \right)^P \lim_{r \to \infty} \left| \frac{G'(r)}{G(r)} \right|^P = \left( \frac{P-1}{P} \right)^P \left( \frac{h}{P-1} \right)^P = \Lambda_P. $$
   \end{proof}
   
\begin{remark}
   It follows from \cite[Theorem 1.5(2)]{DP16} that, in the case $n < P < \infty$, an optimal Hardy weight of $-\Delta_P$ in $X \setminus \{o\}$ is given by
   $$ W := \left(\frac{P-1}{P}\right)^P \left| \frac{\nabla G}{G}\right|^P \cdot \frac{|\gamma - 2 G|^{P-2}}{|\gamma - G|^P} \left( \gamma^2 + 2(P-2)G(\gamma - G) \right) $$
   with $\gamma := G(0) =  \int_0^\infty \frac{dt}{(\omega_n f(t))^{1/(P-1)}}$.
\end{remark}   
   
\begin{remark}
   In the special case of a Damek-Ricci space $X = X^{p,q}$, the asymptotics of $W$ for $r \to \infty$ can be evaluated more precisely by adapting the
   computations in \cite[p.153]{BDAGG17}: The density function of $X^{p,q}$ is given by
   $$ f(r) = 2^{p+q} (\sinh(r/2))^{p+q}(\cosh(r/2))^q, $$
   the mean curvature of horospheres is given by $h=\frac{p+2q}{2}$,
   and we obtain with $s = \sinh(t/2)$ and $\alpha = \frac{q}{2(P-1)}+\frac{1}{2}$,
   \begin{multline*}
       (2^{p+q} \omega_n)^{\frac{1}{P-1}} G(r) = \int_r^\infty (\sinh(t/2))^{-\frac{p+q}{(P-1)}} (\cosh(t/2))^{-\frac{q}{2(P-1)}} dt \\
       = 2 \int_{\sinh \frac{r}{2}}^\infty s^{-\frac{p+q}{P-1}} \left( 1+s^2 \right)^{-\alpha} ds = 2 \int_{\sinh\frac{r}{2}}^\infty s^{-\frac{p+q}{P-1} -2\alpha} \left( 1- \frac{\alpha}{s^2} + o\left( \frac{1}{s^3} \right)  \right) ds \\
       = \frac{P-1}{h} \left(\sinh\frac{r}{2}\right)^{-\frac{2h}{P-1}} - 2 \alpha \left( \frac{2h}{P-1} +2 \right)^{-1} \left(\sinh\frac{r}{2}\right)^{-\frac{2h}{P-1}-2} + o\left( \left(\sinh\frac{r}{2}\right)^{-\frac{2h}{P-1}-3}\right),
   \end{multline*}
   for $r \to \infty$.
   This implies that
   \begin{multline*}
       \left| \frac{G'(r)}{G(r)}  \right|^P \\ = \left( \coth \frac{r}{2}\right)^{- \frac{P}{P-1}\cdot q}\left|
       \frac{P-1}{h}  -2\alpha
       \left(\frac{2h}{P-1}+2 \right)^{-1}  \left( \sinh \frac{r}{2} \right)^{-2} + o\left( \left(\sinh\frac{r}{2}\right)^{-3}\right) \right|^{-P},
    \end{multline*}
    and therefore the following asymptotics of $W$ at infinity:
    \begin{multline*}
       W(r) \\ = \left( \frac{h}{P} \right)^P \left( \coth \frac{r}{2}\right)^{- \frac{P}{P-1}\cdot q}
       \left( 1 + 2\alpha P \frac{h}{2h+2P-2} \left(\sinh \frac{r}{2} \right)^{-2} + o\left( \sinh \frac{r}{2} \right)^{-3} \right).
   \end{multline*}
\end{remark}   
   

The corresponding $P$-Rellich inequality can be found in \cite[Theorem~7.3]{DP16}, and a quasi-linear version of the uncertainty principle follows by using the Hölder inequality properly (confer with Subsection~\ref{sec:Applications}).

\appendix

\section{Non-compact harmonic manifolds and Damek-Ricci spaces}\label{app:harmDR}

Let us start with the original definition of a harmonic manifold:
 A complete Riemannian manifold $(X,g)$ is harmonic if, for all $o \in X$, there exists a local non-constant radial harmonic function, that is, a function $g(x) = g(d(o,x))$, depending only on the distance to $o \in X$ and satisfying $\Delta g = 0$ on a small punctured neighbourhood of $o$. There are various other equivalent definitions of harmonicity which can be found, e.g., in  \cite[p. 224]{Willm93}, \cite[Prop. 6.21]{Besse78} or \cite[Th{\'e}or\`eme 4]{Rou03}. A very natural way to characterize harmonic manifolds (going back to Willmore) is to require that all harmonic functions satisfy the Mean Value Property. This characterization was given in the introduction.

It is known for any non-compact harmonic manifold that its volume density $f(r)$, $r=d(o,\cdot)$, is a strictly positive exponential polynomial, that is, a finite sum $\sum_{i=1}^k (p_i(r) \sin(\beta_i r) + q_i(r) \cos(\beta_i r))e^{\alpha_i r}$ (see \cite[Theorem 2]{Ni05}). Moreover, if a non-compact harmonic manifold of dimension $n$ has sub-exponential volume growth, then it must be isometric to the flat Euclidean space $\mathbb{R}^n$ (this follows from \cite{RS02}).

Let us now focus on an explicit family of non-compact harmonic manifolds, namely the Damek-Ricci spaces. They are particular Lie groups with left-invariant metrics. Their definition requires some preparation. 

A 2-step nilpotent Lie algebra $\mathfrak{n}$ with center $\mathfrak{z}$ and an inner product $\langle \cdot,\cdot \rangle: \mathfrak{n} \times \mathfrak{n} \to \mathbb{R}$ is called a \emph{Lie algebra of Heisenberg-type} if it has a orthogonal decomposition $\mathfrak{n} = \mathfrak{v} \oplus \mathfrak{z}$ and if the linear maps $J_Z: \mathfrak{v} \to \mathfrak{v}$ for each $Z \in \mathfrak{z}$, defined by
$$ \langle J_Z V,V' \rangle = \langle Z,[V,V'] \rangle $$
satisfy 
\begin{equation} \label{eq:JZ}
J_Z^2(V) = - \Vert Z \Vert^2 \, V. 
\end{equation}
A Heisenberg-type Lie algebra $\mathfrak{n}$ comes with the two parameters $p,q \ge 1$ which are the dimensions $p = {\rm{dim}}(\mathfrak{v})$ and $q = {\rm{dim}}(\mathfrak{z})$. The above conditions restrict the possible pairs $(p,q)$, for example, it follows from \eqref{eq:JZ} that $p$ needs to be an even number. 
Let $\mathfrak{a}$ be a $1$-dimensional Lie algebra generated by $H \in \mathfrak{a}$, that is, $\mathfrak{a} = \mathbb{R} H$. We extend a Lie algebra $\mathfrak{n}$ of Heisenberg-type
to a solvable Lie algebra 
\begin{equation} \label{eq:defs} 
\mathfrak{s} = \mathfrak{n} \oplus \mathfrak{a}, 
\end{equation}
of dimension $n = p+q+1$ by setting
$$ [H,V] = \frac{1}{2}V \quad \text{and} \quad [H,Z] = Z
$$
for all $V \in \mathfrak{v}$ and $Z \in \mathfrak {z}$. We also extend the inner product on $\mathfrak{n}$ to an
inner product $\langle \cdot,\cdot \rangle: \mathfrak{s} \times \mathfrak{s} \to \mathbb{R}$, by setting $\Vert H \Vert = 1$ and requiring that the decomposition \eqref{eq:defs} is orthogonal. We can now give the definition of a Damek-Ricci space.

\begin{definition}
  A \emph{Damek-Ricci} space $X^{p,q}$ is the unique connected and simply connected Lie group $S = NA$ associated to the solvable extension $\mathfrak{s}$ of a Heisenberg-type Lie algebra $\mathfrak{n}$ with parameters $p,q$ by $\mathfrak{a} = \mathbb{R} H$, as described above, and equipped with a left invariant metric such that the Riemannian metric of $T_eS$ agrees with the inner product on $\mathfrak{s}$ under the canonical identification $T_eS \cong \mathfrak{s}$, where $e \in S$ denotes the neutral element of $S$. The space $X^{p,q}$ has therefore the dimension $n = p+q+1$. 
\end{definition}





Finally, let us briefly explain how the real hyperbolic space ${\mathbb{H}}^n$ can be described as a solvabe Lie group with left-invariant metric, even though this space is associated to a Lie algebra $\mathfrak{s} = \mathfrak{n} \oplus \mathfrak{a}$ where $\mathfrak{n}$ is not $2$-step nilpotent but abelian. For that reason, real hyperbolic spaces are not contained in the class of Damek-Ricci spaces.

\begin{example}
  The upper half space model of the real hyperbolic space $\mathbb{H}^n$ is given by $\mathbb{H}^n = \{ (\mathbf{x},y): \mathbf{x} \in \mathbb{R}^{n-1}, y > 0 \}$
  with Riemannian metric
  $$ g_{\mathbb{H}^n}(v,w) = \frac{\langle v,w \rangle_0}{y^2} \quad \text{for $v,w \in T_{({\mathbf{x}},y)} \cong \mathbb{R}^n$,} $$
  where $\langle \cdot,\cdot \rangle_0$ denotes the Euclidean inner product of $\mathbb{R}^n$. For this space we have
  $\lambda_0(\mathbb{H}^n) = \frac{(n-1)^2}{4}$.
  
  For the description of $\mathbb{H}^n$ as a solvable Lie group $S = NA$, we start with the Lie algebras
  $$ \mathfrak{n} = \left\{ n({\mathbf{x}}): {\mathbf{x}} \in {\mathbb{R}}^{n-1} \right\} \quad \text{and} \quad \mathfrak{a} = \left\{ {\rm{diag}}(0,0,\dots,0,t,-t): 
  t \in {\mathbb{R}} \right\}, $$ 
  where
  $$ n({\mathbf{x}}) = \begin{pmatrix} 0_{n-1} & 0 & - {\mathbf{x}}^\top \\ {\mathbf{x}} & 0 & 0 \\ 0 & 0 & 0 \end{pmatrix}. $$
  Let $H = {\rm{diag}}(0,0,\dots,0,1,-1)$.
  It is easy to see that $[H,X] = HX-XH = X$ and $[X,Y] = XY-YX = 0$ for all $X,Y \in \mathfrak{n}$. An inner product on the solvable Lie algebra $\mathfrak{s} = \mathfrak{n} \oplus \mathfrak{a}$ with Lie bracket $[Z_1,Z_2] = Z_1 Z_2 - Z_2 Z_1$ is given by the requirements that $\mathfrak{n}$ and $\mathfrak{a}$ are perpendicular, 
  $\Vert H \Vert =1$, and
  $$ \langle n({\mathbf{x}}),n({\mathbf{y}}) \rangle = \langle {\mathbf{x}},{\mathbf{y}} \rangle_0, $$
  where $\langle \cdot,\cdot \rangle_0$ denotes the Euclidean inner product on $\mathbb{R}^{n-1}$. The map 
  \begin{eqnarray*} \mathfrak{s} = \mathfrak{n} \oplus \mathfrak{a} &\to& S = NA, \\ (n({\mathbf{x}}),tH) &\mapsto& \exp(n({\mathbf{x}})) \exp(tH) = \begin{pmatrix} {\rm{Id}}_{n-1} & 0 & -e^{-t} {\mathbf{x}}^\top \\ {\mathbf{x}} & e^t & - \frac{1}{2} e^{-t} \Vert {\mathbf{x}} \Vert_0^2 \\ 0 & 0 & e^{-t} \end{pmatrix}
  \end{eqnarray*}
  is a bijection. If we identify $S=NA$ with
  $\mathbb{R}^{n-1} \times \mathbb{R}^+$ via
  $$ \exp(n({\mathbf{x}})) \exp(tH) \mapsto ({\mathbf{x}},e^t), $$
  the Lie group multiplication in $S$ takes the form
  $$ ({\mathbf{y}},e^s) \cdot ({\mathbf{x}},e^t) = ({\mathbf{y}}+e^s{\mathbf{x}},e^{s+t}). $$
  In other words, $S = NA$ can be identified with a semidirect product $\mathbb{R}^{n-1} \rtimes \mathbb{R}^+$, where the group operation in $\mathbb{R}^{n-1}$ is addition
  and the group operation in $\mathbb{R}^+$ is multiplication. 
  
  We transfer the inner product of $\mathfrak{s}$ to an inner product on $T_eS$ via the identification $Z \mapsto \frac{d}{dt}\vert_{t=0} \exp(t Z)$ and extend it left-invariantly to a Riemannian metric $g_S$ on $S$. Then the map
  $$ \exp(n({\mathbf{x}})) \exp(sH) \mapsto ({\mathbf{x}},e^s) \in \mathbb{H}^n $$
  is an isometry between $(S,g_S)$ and $({\mathbb{H}}^n,g_{{\mathbb{H}}^n})$, which implies that $(S,g_S)$ is a model of the $n$-dimensional real hyperbolic space.
\end{example}

\noindent\textbf{Acknowledgements:} F.~F. wants to thank the University of Durham for their kind hospitality during a stay where the main idea of the paper was worked out. Moreover, F.~F. wants to thank Noema Nicolussi for pointing out paper \cite{BGGP20}, and also the Heinrich-Böll-Stiftung for the support.

\end{document}